\newtheorem{theorem}{Theorem}[section]
\newtheorem{lemma}[theorem]{Lemma}
\newtheorem{proposition}[theorem]{Proposition}
\newtheorem{corollary}[theorem]{Corollary}
\newtheorem*{thm*}{\protect\theoremname}
\theoremstyle{definition}
\newtheorem*{cor*}{\protect\corollaryname}
\renewcommand{\geq}{\geqslant}
\long\def\@savemarbox#1#2{\global\setbox#1\vtop{\hsize\marginparwidth 
%%%%%  \@parboxrestore #2}}
  \@parboxrestore\tiny\raggedright #2}}
\newcommand{\PSL}{\mathsf{PSL}}
\newcommand{\PGL}{\mathsf{PGL}}
\newcommand{\abs}[1]{\left|#1\right|}
\newcommand{\Bc}{\mathcal B}
\newcommand{\Oc}{\mathcal O}
\DeclareMathOperator{\Hb}{\mathbb{H}}
\DeclareMathOperator{\Pb}{\mathbb{P}}
\DeclareMathOperator{\Rb}{\mathbb{R}}
\DeclareMathOperator{\Psf}{\mathsf{P}}
\DeclareMathOperator{\Fc}{\mathcal{F}}
\DeclareMathOperator{\SL}{\mathsf{SL}}
\newcommand{\norm}[1]{\left\|#1\right\|}
\providecommand{\corollaryname}{Corollary}
\providecommand{\theoremname}{Theorem}
\begin{document}

\title{Geometry and Dynamics of Transverse Groups}
\author[Canary]{Richard Canary}
\address{University of Michigan}
\author[Zhang]{Tengren Zhang}
\address{National University of Singapore}
\author[Zimmer]{Andrew Zimmer}
\address{University of Wisconsin-Madison}
\thanks{Canary was partially supported by grant  DMS-2304636 from the National Science Foundation.
Zhang was partially supported by the NUS-MOE grant R-146-000-270-133. 
Zimmer was partially supported by a Sloan research fellowship and grants DMS-2105580 and DMS-2104381 from the
National Science Foundation.}

%\begin{abstract} 
%We survey recent results on the 
%\end{abstract} 

\maketitle

\tableofcontents

\section{Introduction}

In this paper we survey the geometry and dynamics of transverse subgroups of semi-simple Lie groups. Examples of transverse groups include 
all discrete subgroups of rank 1 Lie groups, as well as Anosov and relatively Anosov subgroups of higher rank semi-simple Lie groups.
They were first introduced by Kapovich,  Leeb  and Porti \cite{KLP}, who
called them regular, antipodal groups. This paper will focus on the theory of Patterson-Sullivan measures and its consequences for the 
geometry and dynamics of such groups. 

Our motivation will be provided by two classical theorems about closed hyperbolic surfaces.

\begin{theorem}[Hedlund \cite{hedlund}]
If $X$ is a closed hyperbolic surface, then the geodesic flow on $T^1X$ is ergodic with respect to the Liouville measure, i.e. if $A\subset T^1X$ is a measurable set which is invariant under the flow, then $A$ either has zero or full Liouville measure.
\end{theorem}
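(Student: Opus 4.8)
The plan is to run Eberhard Hopf's classical argument. Identify $X$ with $\Gamma\backslash\mathbb{H}^2$, where $\Gamma\cong\pi_1(X)$ is a cocompact torsion-free Fuchsian group, so that $T^1X$ becomes $\Gamma\backslash G$ with $G=\PSLT$; under this identification the geodesic flow $g_t$ is right translation by the diagonal one-parameter subgroup $a_t=\operatorname{diag}(e^{t/2},e^{-t/2})$, and the Liouville measure becomes the Haar measure $\mathsf m$ on $\Gamma\backslash G$, which is finite because $X$ is closed and $g_t$-invariant because $G$ is unimodular. It then suffices to show that every $g_t$-invariant $f\in L^1(\mathsf m)$ is $\mathsf m$-a.e.\ constant, and since $C(T^1X)$ is dense in $L^1(\mathsf m)$ it suffices to treat continuous $f$.

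First I would invoke the Birkhoff ergodic theorem: for $f\in C(T^1X)$ the forward and backward time averages $f^+(v)=\lim_{T\to\infty}\frac1T\int_0^T f(g_tv)\,dt$ and $f^-(v)=\lim_{T\to\infty}\frac1T\int_0^T f(g_{-t}v)\,dt$ exist for $\mathsf m$-a.e.\ $v$, coincide $\mathsf m$-a.e., and their common value $\bar f$ is $g_t$-invariant with $\int\bar f\,d\mathsf m=\int f\,d\mathsf m$. Second comes the geometric input: since $X$ is compact, $f$ is uniformly continuous, and for two unit vectors $v,w$ lying on the same strong stable horocycle one has $d(g_tv,g_tw)\to 0$ as $t\to+\infty$, so their forward averages agree; hence $f^+$ is constant along strong stable horocycles and, symmetrically, $f^-$ along strong unstable horocycles. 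Combining this with $f^+=f^-=\bar f$ a.e., the invariant function $\bar f$ is $\mathsf m$-a.e.\ constant along the strong stable horocycle flow, the strong unstable horocycle flow, and the geodesic flow.

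Third, I would conclude using the local product structure of $G$: the upper unipotent, lower unipotent, and diagonal one-parameter subgroups are transverse at the identity, their product map is a local diffeomorphism onto a neighbourhood of $e$, and in these coordinates Haar measure has a smooth positive density with respect to the product of the Lebesgue measures in the three parameters. A Fubini argument then forces $\bar f$ to be locally a.e.\ constant, and connectedness of $T^1X$ upgrades this to $\bar f=\int f\,d\mathsf m$ a.e.; ergodicity follows.

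The step I expect to be the main obstacle is the passage in the third paragraph from ``$\bar f$ is constant along each of three transverse foliations'' to ``$\bar f$ is a.e.\ constant'': one must arrange that the exceptional null set coming from Birkhoff is compatible with the product slicing, so that Fubini genuinely applies — this is exactly the absolute continuity of the stable and unstable (horocyclic) foliations with respect to $\mathsf m$. In constant curvature this is painless, since these foliations are real-analytic and one can compute directly in exponential coordinates on $G$; in variable negative curvature it is precisely the delicate point of Hopf's method. (An alternative, shorter but less elementary route: since $\Gamma\backslash G$ has finite volume, the Howe--Moore theorem gives decay of matrix coefficients on $L^2_0(\Gamma\backslash G)$, hence mixing and a fortiori ergodicity of the $a_t$-action.)
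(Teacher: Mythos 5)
Your proposal is a correct rendition of the classical Hopf argument, and it is exactly the route the paper has in mind: the survey states Hedlund's theorem without proof, but later explicitly invokes ``Hopf's argument \cite{hopf} for the ergodicity of the geodesic flows of closed hyperbolic manifolds,'' which is what you have written out (identification of $T^1X$ with $\Gamma\backslash\PSLT$, Birkhoff averages, constancy of $f^{\pm}$ along stable/unstable horocycles, and the Fubini/local-product step). You also correctly isolate the only delicate point --- absolute continuity of the horocyclic foliations, which is harmless in constant curvature --- so there is nothing to object to.
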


\begin{theorem}[Huber \cite{huber}]\label{thm:Huber} If $X$ is a closed hyperbolic surface,  and $N_X(T)$ denotes the number of closed geodesics on $X$ of length at most $T$,
then
$$N_X(T)\sim \frac{e^T}{T}$$
in the sense that $\lim_{T\to\infty} \frac{TN_X(T)}{e^T}=1$.
\end{theorem}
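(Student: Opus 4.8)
I would deduce Huber's theorem from mixing of the geodesic flow, following Margulis's dynamical argument in the reformulation adapted to Patterson--Sullivan theory (Roblin), rather than via the Selberg trace formula. Write $\G=\pi_1(X)<\PSL(2,\Real)=\operatorname{Isom}^+(\Ht)$, so that $T^1X=\G\backslash\PSL(2,\Real)$ and the geodesic flow $\{g_t\}$ is right translation by the diagonal one--parameter subgroup. First reduce to counting \emph{primitive} closed geodesics: a non--primitive closed geodesic of length $\le T$ is the $k$--th iterate of a primitive one of length $\le T/k$ for some $k\ge 2$, so the non--primitive ones number $O(e^{T/2})$ and are negligible. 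Primitive closed geodesics of length $\ell$ are then in bijection with primitive conjugacy classes $[\g]\subset\G$ of translation length $\ell$, equivalently with periodic orbits of $\{g_t\}$ of least period $\ell$.

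\textbf{Mixing.} Equip $T^1X$ with the Bowen--Margulis--Sullivan probability measure $m$ built from the Patterson--Sullivan measures of $\G$ on $\partial_\infty\Ht\cong S^1$; since $\G$ is cocompact the critical exponent is $\delta=1$, the Patterson--Sullivan measure is the round measure, and $m$ is the normalized Liouville measure. The analytic input is that $(g_t,m)$ is \emph{mixing}, which strengthens Hedlund's ergodicity and follows here from decay of matrix coefficients of $\PSL(2,\Real)$ (Howe--Moore), or from the Hopf argument combined with non--integrability of the stable/unstable foliations.

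\textbf{Counting via equidistribution.} Mixing yields equidistribution of expanding horocyclic pieces: with $\{\mu^u\}$ the Margulis system of conditional measures on strong unstable leaves, normalized so that $(g_t)_*\mu^u=e^{t}\mu^u$, one has $e^{-t}(g_t)_*(\mu^u|_U)\to \mu^u(U)\,m$ weakly for any relatively compact leaf--piece $U$. Fix small flow boxes with local product coordinates $W^u_\eps\times W^s_\eps\times(-\eps,\eps)$ and central transversals $\Sigma$. By a closing/shadowing lemma, orbit segments issuing from a point of $\Sigma$ and returning $\eps$--close to it after time $\approx t$ correspond, up to controlled error, to periodic orbits of period $\approx t$ meeting the box; the $\mu^u$--measure of the set of such near--returns is computed from the equidistribution statement above, and after integrating the period over $[0,T]$ and summing over a finite cover of $T^1X$ by such boxes one obtains $N_X(T)\sim e^{hT}/(hT)$ with $h=1$, which is the claim. (Alternatively one could count $\G$--orbit points $\{\g\in\G:\dist(o,\g o)\le R\}\sim c\,e^{R}$ and then pass to conjugacy classes by a Tauberian/summation argument, but this ultimately rests on the same mixing input.)

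\textbf{Main obstacle.} The analytic heart --- mixing --- is routine in this constant--curvature setting, so the real difficulty is the counting step: calibrating the closing lemma so that near--returns in a flow box match genuine closed geodesics with neither over-- nor under--counting, controlling orbit pieces that graze the boundary of a box, assembling the local estimates over a cover of $T^1X$, and passing from ``period in a window $[t,t+\Delta]$'' back to the cumulative function $N_X(T)$. The cleanest route through the last point is Margulis's averaging trick, which extracts the leading asymptotic from bare mixing with no mixing rate; feeding in an \emph{effective} mixing estimate would additionally produce a power--saving error term.
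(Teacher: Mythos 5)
The paper does not actually prove this statement: Theorem~\ref{thm:Huber} is quoted as a classical result, and Huber's original argument goes through the Selberg trace formula and the spectral theory of the Laplacian on $X$. Your route is instead Margulis's dynamical proof---mixing of the geodesic flow, equidistribution of expanded unstable (horocyclic) leaves, a closing lemma in flow boxes, and the averaging trick---which is exactly the method the paper alludes to when it notes the generalization to closed negatively curved manifolds via Margulis, and which is the same engine behind Roblin's counting theorem quoted later in the survey. Your outline is correct as far as it goes: for a cocompact Fuchsian group $\delta=1$, the Patterson--Sullivan measure is the round measure and the BMS measure is Liouville, mixing follows from Howe--Moore (or Hopf plus non-integrability of the foliations), and the reduction to primitive geodesics is harmless provided you first record the crude a priori bound $N_X(S)=O(e^{S})$ from orbit counting, without which the $O(e^{T/2})$ estimate for non-primitive geodesics is circular. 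As for what each approach buys: the trace formula yields Huber's theorem with a power-saving error term and ties the count to the Laplace spectrum, while the dynamical proof is soft, requires no spectral input, and is the one that survives in variable negative curvature and in the geometrically finite and transverse settings that are the subject of this paper. Do be aware that your write-up is a roadmap rather than a proof: the step you yourself flag as the main obstacle---calibrating the closing lemma so that near-returns to a flow box correspond bijectively, up to controlled error, to periodic orbits, assembling the local estimates over a cover, and passing from period windows to the cumulative count $N_X(T)$---is where essentially all of the work lives, and in your proposal it is described and located but not carried out.
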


Both theorems generalize to the setting of closed negatively curved manifolds (by work of Anosov \cite{anosov} and Margulis \cite{margulis}).
However, if $X$ is an infinite volume hyperbolic surface with finitely generated fundamental
group, then the geodesic flow on $T^1X$ is not ergodic with respect to the Liouville measure. Moreover, with respect to the Liouville measure almost every flow line is non-recurrent, i.e. returns to any compact set only finitely often.

Patterson--Sullivan theory provides a way to put a measure on the geodesic flow which is supported on the non-wandering portion of the geodesic flow
and captures its chaotic behavior. In particular, in the case mentioned above, the geodesic flow is ergodic with respect to this measure. Moreover, one can obtain asymptotic results on the number of closed geodesics of length at most $T$.

We will begin with a brief survey of the classical theory of Patterson--Sullivan measures for hyperbolic surfaces. We will then discuss our  work \cite{CZZ2,CZZ3,CZZ4} on Patterson--Sullivan theory for transverse subgroups of semi-simple Lie groups. 
We will also discuss its implications
in the better understood settings of Anosov and relatively Anosov groups. For simplicity of exposition, we will reduce to the setting of transverse
subgroups of $\mathsf{SL}(d,\mathbb R)$.

In a final section, we will survey recent work of Kim--Oh--Wang \cite{KOW}, Blayac--Canary--Zhu--Zimmer \cite{BCZZ1,BCZZ2} and Kim--Oh \cite{KO}.

\medskip\noindent
{\bf Acknowledgements:} The authors would like to thank Hee Oh for helpful comments on an earlier version of this manuscript. Richard Canary
would like to thank the organizers of the conference on ``New Aspects of Teichm\"uller theory'' for the opportunity to speak at the conference.

\section{Fuchsian groups}

A \emph{Fuchsian group}  is a discrete subgroup of $\mathsf{PSL}(2,\mathbb R)$, which we regard as the group of orientation-preserving isometries of 
the upper half-plane model for the hyperbolic plane $\mathbb H^2$. We first describe Patterson--Sullivan theory for Fuchsian groups. This special case will serve as a model for the more general theory described in later sections.  

We begin by studying the Hopf parametrization of $T^1\mathbb H^2$ and the description of the geodesic flow and the
action of $\mathsf{PSL}(2,\mathbb R)$ on $T^1\mathbb H^2$ in this parametrization.
The Hopf parameterization identifies $T^1\mathbb H^2$ with $\partial\mathbb H^{(2)}\times \mathbb R$, where $\partial\mathbb{H}^{(2)}$ denotes the set of distinct pairs of points in $\partial\mathbb{H}$. Explicitly, if $w,z\in\partial\mathbb{H}^2$ are distinct, let $r_{w,z}:\mathbb R\to\mathbb{H}^2$ be the unit speed geodesic from $w$ to $z$ so that $r_{w,z}(0)$ lies on the horocycle $H_z$ based at $z$ passing through $i$. Then $(w,z,t)$ is identified with $r_{w,z}'(t)$. In the Hopf parametrization, the geodesic flow $\{\varphi_s\}_{s\in\mathbb R}$ has the simple form
$$\varphi_s(w,z,t)=(w,z,t+s).$$

We define the \emph{Busemann function} $b:\partial\mathbb H^2\times\mathbb H^2\to\mathbb R$ by letting $b(z,x)$ be the signed distance between
$H_z$ and the horocycle $H_{z,x}$ based at $z$ passing through $x$,  where the distance is positive if and only if $H_{z,x}$ lies inside  of the horodisk
bounded by $H_z$.  One may check that if $\{y_n\}$ is a sequence in $\mathbb H^2$ converging to $z\in \partial\mathbb{H}^2$,
then 
$$b(z,x)=\lim_{y_n\to z} d(y_n,i)-d(y_n,x).$$
We then define the \emph{Busemann cocycle}
$B:\mathsf{PSL}(2,\mathbb R)\times \partial\mathbb H^2\to\mathbb R$ by letting
$B(\gamma,z)$ be the signed distance between $\gamma(H_z)$ and $H_{\gamma(z)}$, where the distance is positive if and only if $\gamma(H_z)$ lies inside  of the horordisk
bounded by $H_{\gamma(z)}$.
Notice that $B(\gamma,z)=b(\gamma(z),\gamma(i))=-b(z,\gamma^{-1}(i)).$
Furthermore, the action of $\gamma\in\mathsf{PSL}(2,\mathbb R)$ in the Hopf parametrization is then  given
by
$$\gamma(w,z,t)=\Big(\gamma(w),\gamma(z), t+B(\gamma,z)\Big).$$

One may identify $\partial\mathbb H^2$ with $S^1$,  by identifying it with $T^1_i\mathbb H^2$. 
We scale the induced Lebesgue measure on $\partial\mathbb H^2$ to obtain a probability measure $d\lambda$ that we loosely refer to as the
Lebesgue measure on $\partial\mathbb H^2$.
We then define  a Gromov product
$G:\partial\mathbb H^2\times \partial\mathbb H^2\to \mathbb R$ by letting 
$$G(w,z)=b(w,y)+b(z,y)$$
where $y$ is any point on the geodesic joining $w$ to $z$.
One may then define the measure $\tilde m$ on $T^1\mathbb H^2$ to be
$$d\tilde m(w,z,t)=e^{G(w,z)}d\lambda d\lambda ds$$
where $ds$ is Lebesgue measure on $\mathbb R$. The Gromov product has been chosen exactly so that $\tilde m$ is invariant under the transitive action
of $\mathsf{PSL}(2,\mathbb R)$, and hence $\tilde m$  is a scalar multiple of the usual Lebesgue measure.

Let $\Gamma$ be a Fuchsian group, and denote $X_\Gamma:=\Hb^2/\Gamma$. The \emph{limit set} $\Lambda(\Gamma)$ of a Fuchsian group $\Gamma$ is the set of accumulation points of the orbit $\Gamma(i)$ in $\partial\mathbb H^2$. We say that $\Gamma$ is \emph{non-elementary} if $\Lambda(\Gamma)$ is infinite. 
When $\Gamma$ is non-elementary, one may define $\Lambda(\Gamma)$ dynamically as the smallest closed, non-empty, $\Gamma$-invariant subset of $\partial\mathbb H^2$. 

The measure $\tilde m$ descends to a flow invariant measure on $T^1X_\Gamma$ called the Liouville measure. 
 However, if $\Lambda(\Gamma)$ is not all of $\partial\mathbb H^2$, then the geodesic flow on $T^1X_\Gamma$ is not ergodic with respect to the Liouville measure.
 If $\Gamma$ is finitely generated, then the converse holds (since if $\Gamma$ is finitely generated and $\Lambda(\Gamma)=\partial\mathbb H^2$ then $\Gamma$ is a
 lattice).

A point $y\in T^1X_\Gamma$ is \emph{wandering} if it admits a neighborhood $V$ and $T>0$ so that for all $t\ge T$,
$\phi_t(V)$ is disjoint from $V$. One may  show that if $\pi:T^1\mathbb H^2\to T^1X_\Gamma$ is the covering map,
then $\pi(w,z,t)$ is non-wandering for the geodesic flow on $T^1X_\Gamma$ if and only both $w,z\in\Lambda(\Gamma)$.
We then denote the non-wandering portion of the geodesic flow by
$$T^1X_\Gamma^{nw}=\left(\Lambda(\Gamma)^{(2)}\times\mathbb R\right)/\Gamma,$$
where $\Lambda(\Gamma)^{(2)}$ denotes the set of distinct pairs of points in $\Lambda(\Gamma)$.

If $\Gamma$ is finitely generated and $\Lambda(\Gamma)\ne\partial\mathbb H^2$, then the limit set has Lebesgue measure zero,
so $T^1X_\Gamma^{nw}$ has measure zero in the Lebesgue measure on $T^1X_\Gamma$.
We will construct a measure on $T^1X_\Gamma$ which is flow-invariant and supported
on $T^1X_\Gamma^{nw}$. The above discussion hopefully suggests that we should begin by constructing a measure  on $\partial\mathbb H^2$ which
is supported on the limit set.

We define the \emph{Poincar\'e series} of a non-elementary Fuchsian group $\Gamma$ to be
$$P_\Gamma(s)=\sum_{\gamma\in\Gamma} e^{-s d(i,\gamma(i))}.$$
The root test  tells us that there exists $\delta\in[0,\infty]$ such that $P_\Gamma(s)$
converges if $s>\delta$ and diverges if $s<\delta$. We call $\delta=\delta(\Gamma)$
the \emph{critical exponent} of $\Gamma$. 
More geometrically, one may check that $\delta$ is the exponential growth rate of the orbit, i.e. if 
$$\Gamma(R)=
\{\gamma\in\Gamma: d(i,\gamma(i))\le R\}$$
then
$$\delta (\Gamma)=\limsup_{R\to\infty} \frac{\log \#\Gamma(R)}{R}.$$
It follows immediately from this description that $\delta(\Gamma)\le 1$ and  that $\delta(\Gamma)=1$ if $X_\Gamma$
is a closed surface (since then the orbit has the same exponential growth rate as $\mathbb H^2$.)

Notice that with respect to the identification of $\partial\mathbb H^2$ with $S^1$, the Lebesgue measure $\lambda$ satisfies the following quasi-invariance property: for all $\gamma\in\Gamma$,
$$\frac{d\gamma_*(\lambda)}{d\lambda}(z)=\Big|\big(\gamma^{-1}\big)'(z)\Big| \quad \text{or}\quad \frac{d\gamma^*(\lambda)}{d\lambda}(z)=\Big|\gamma'(z)\Big|$$
for almost every $z\in\partial\Hb^2$.
One may  then check that
$$\Big|\big(\gamma^{-1}\big)'(z)\Big| =e^{-B(\gamma^{-1},z)}=e^{b(z,\gamma(i))}.$$
We will say that a probability measure $\mu$ on $\Lambda(\Gamma)$ is a \emph{Patterson--Sullivan measure} for $\Gamma$ (of dimension $\delta(\Gamma)$) if
for all $\gamma\in\Gamma$, $\gamma_*(\mu)$ and $\mu$ are absolutely continuous, and
$$\frac{d\gamma_*(\mu)}{d\mu}(z)=e^{-\delta(\Gamma)B(\gamma^{-1},z)}$$
for almost every $z\in\Lambda(\Gamma)$. This quasi-invariance property says that $\mu$ transforms like $\delta(\Gamma)$-dimensional Hausdorff measure.

\begin{theorem}[Patterson \cite{patterson}]
\label{psexists}
If $\Gamma$ is a  non-elementary Fuchsian group, then it admits a Patterson--Sullivan measure.
\end{theorem}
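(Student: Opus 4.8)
The plan is to reproduce Patterson's original argument, constructing $\mu$ as a weak-$*$ limit of normalized orbital counting measures on the compactification $\overline{\mathbb H^2}=\mathbb H^2\cup\partial\mathbb H^2$. For $s>\delta(\Gamma)$ put
$$\mu_s=\frac{1}{P_\Gamma(s)}\sum_{\gamma\in\Gamma}e^{-sd(i,\gamma(i))}\,\delta_{\gamma(i)},$$
a Borel probability measure on $\overline{\mathbb H^2}$, and choose a sequence $s_n\downarrow\delta(\Gamma)$ along which $\mu_{s_n}$ converges weak-$*$ to some probability measure $\mu$ (possible since $\overline{\mathbb H^2}$ is compact and metrizable).

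First I would treat the \emph{divergence type} case, where $P_\Gamma(\delta(\Gamma))=\infty$. Since $\Gamma(i)$ is discrete in $\mathbb H^2$, every relatively compact open $U\subset\mathbb H^2$ contains only finitely many orbit points, so $\mu_s(U)\le\#(\Gamma(i)\cap U)/P_\Gamma(s)\to0$ as $s\downarrow\delta(\Gamma)$; covering $\mathbb H^2$ by such $U$ shows $\mu$ is supported on $\partial\mathbb H^2$, and since $\mu_s$ is supported on $\overline{\Gamma(i)}$ for every $s$, $\mu$ is in fact supported on $\overline{\Gamma(i)}\cap\partial\mathbb H^2=\Lambda(\Gamma)$. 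For the quasi-invariance, fix $\gamma\in\Gamma$ and reindex the defining sum by $\eta\mapsto\gamma\eta$, using $d(i,\gamma^{-1}\eta(i))=d(\gamma(i),\eta(i))$, to obtain the exact identity $\gamma_*\mu_s=\rho_s\,\mu_s$, where $\rho_s\colon\overline{\mathbb H^2}\to\mathbb R_{>0}$ is continuous, equals $e^{-s(d(\gamma(i),x)-d(i,x))}$ for $x\in\mathbb H^2$, and, by the limit formula $b(z,x)=\lim_{y_n\to z}(d(y_n,i)-d(y_n,x))$ from Section~2 together with $B(\gamma^{-1},z)=-b(z,\gamma(i))$, equals $e^{-sB(\gamma^{-1},z)}$ for $z\in\partial\mathbb H^2$. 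Because $\rho_{s_n}\to\rho_{\delta(\Gamma)}$ uniformly on the compact space $\overline{\mathbb H^2}$ and $\mu_{s_n}\to\mu$ weak-$*$, one passes to the limit to get $\gamma_*\mu=e^{-\delta(\Gamma)B(\gamma^{-1},\cdot)}\,\mu$, which is precisely the defining transformation rule of a Patterson--Sullivan measure.

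The main obstacle is the \emph{convergence type} case, $P_\Gamma(\delta(\Gamma))<\infty$: then $\mu_s$ converges as $s\downarrow\delta(\Gamma)$ to a probability measure carried by the orbit $\Gamma(i)\subset\mathbb H^2$, with no boundary mass, and the argument above produces nothing. Patterson's device is to reweight the series: one constructs a non-decreasing function $h\colon[0,\infty)\to(0,\infty)$ such that the modified series $\sum_{\gamma}h(d(i,\gamma(i)))e^{-sd(i,\gamma(i))}$ still has critical exponent $\delta(\Gamma)$ but now \emph{diverges} at $s=\delta(\Gamma)$, while $h$ is slowly varying in the sense that $h(t+r)/h(t)\to1$ as $t\to\infty$, uniformly for $r$ in bounded sets. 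This is a real-analysis lemma about Dirichlet series, proved by interpolating the partial sums of $P_\Gamma$ by a piecewise-exponential function, and I expect it to be the only genuinely non-formal ingredient. Replacing $\mu_s$ by the $h$-weighted orbital measures and rerunning the previous paragraph, the divergence of the new series again forces $\mu$ onto $\partial\mathbb H^2$, and the Radon--Nikodym density $\gamma_*\mu_s/\mu_s$ now acquires the extra factor $h(d(\gamma(i),x))/h(d(i,x))$; since $d(\gamma(i),x)$ and $d(i,x)$ tend to infinity with difference bounded by $d(i,\gamma(i))$ as $x\to z\in\partial\mathbb H^2$, this factor tends to $1$, so the limiting transformation rule is unchanged and we again obtain a Patterson--Sullivan measure. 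Taking $h\equiv1$ in divergence type, the two cases merge into a single argument.
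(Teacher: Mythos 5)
Your proposal is correct and follows essentially the same route as the paper: the normalized orbital measures $\mu_s$, a weak-$*$ limit along $s_n\downarrow\delta(\Gamma)$, the divergence of $P_\Gamma$ at $\delta(\Gamma)$ to push the mass to $\Lambda(\Gamma)$, the reindexing argument for quasi-invariance, and Patterson's slowly varying weight $h$ in the convergence-type case. The only difference is cosmetic — you state the slow-variation of $h$ additively ($h(t+r)/h(t)\to 1$) where the paper uses the multiplicative form $h(\theta t)\le t^\epsilon h(\theta)$, which amounts to a change of variables.
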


\begin{proof}[Sketch of proof:]
We will first give the proof in the simpler case when $P_\Gamma(\delta(\Gamma))=+\infty$.
%(one often  says that $\Gamma$ is of  divergent type in this case). 
In fact, this is always true
when $\Gamma$ is finitely generated. (However, there are finitely generated 
subgroups of $\mathrm{Isom}^+(\mathbb H^3)$ whose Poincare series converges at its critical exponent.)

For all $s>\delta(\Gamma)$ we construct the probability measure
$$\mu_s=\frac{1}{P_\Gamma(s)}\sum_{\gamma\in\Gamma} e^{-sd(i,\gamma(i))}\mathcal D_{\gamma(i)}$$
where $\mathcal D_{\gamma(i)}$ is the probability measure with support $\{\gamma(i)\}$. We may
regard $\mu_s$ as a probability measure on the compact space $\overline{\Gamma(i)}=\Gamma(i)\cap\Lambda(\Gamma)$.
%Notice that, by construction, the support of $\mu_s$ is $\overline{\Gamma(i)}$. 
So there exists a sequence
$s_n\to \delta(\Gamma)$ so that $s_n>\delta(\Gamma)$ for all $n$ and $\mu_{s_n}$ converges weakly to a 
measure $\mu$  on $\overline{\Gamma(i)}$.

For any $R>0$, 
$$\mu_{s_n}(B(i,R))\le \frac{1}{P_\Gamma(s_n)} \#\Gamma(R).$$
Since $P_\Gamma(s_n)\to \infty$, we see that $\mu_{s_n}(B(i,R))\to 0$, so
$\mu(B(i,R))=0$. It follows that the support of $\mu$ is contained in the limit set $\Lambda(\Gamma)$.

We now check the quasi-invariance property. If $\alpha\in\Gamma$ and $s>\delta(\Gamma)$, then
\begin{eqnarray*}
\alpha_*\mu_s & = & \sum_{\gamma\in\Gamma} e^{-sd(i,\gamma(i))}\alpha_*\mathcal D_{\gamma(i)}\\
& = & \sum_{\gamma\in\Gamma} e^{-sd(i,\gamma(i))}\mathcal D_{\alpha\gamma(i)}\\
& = & \sum_{\gamma\in\Gamma} e^{-sd(i,\alpha^{-1}\gamma(i))}\mathcal D_{\gamma(i)}\\
& = & \sum_{\gamma\in\Gamma} \left( e^{-s\big(d(i,\alpha^{-1}\gamma(i))-d(i,\gamma(i))\big)}\right) e^{-sd(i,\gamma(i))}\mathcal D_{\gamma(i)}\\
& = & \sum_{\gamma\in\Gamma} \left( e^{-s\big(d(\alpha(i),\gamma(i))-d(i,\gamma(i)\big)}\right) e^{-sd(i,\gamma(i))}\mathcal D_{\gamma(i)}\\
\end{eqnarray*}
for all $\alpha\in\Gamma$ and $s>\delta(\Gamma)$. Notice that if $\{\gamma_n(i)\}\to z\in\Lambda(\Gamma)$, then
$$\lim_{n\to\infty} d(\alpha(i),\gamma_n(i))-d(i,\gamma_n(i))=-b(z,\alpha(i))=B(\alpha^{-1},z)$$
so
$$\frac{d\alpha_*(\mu)}{d\mu}(z)=e^{-\delta(\Gamma)B(\alpha^{-1},z)}$$
for all $\alpha\in\Gamma$ and $z\in\Lambda(\Gamma)$. This finishes the sketch of the proof when $P_\Gamma(\delta(\Gamma))=+\infty$.

If $P_\Gamma(\delta(\Gamma))<+\infty$, one makes use of the following lemma:

\begin{lemma}[Patterson]
There exists a continuous non-decreasing function $h:\mathbb R_{\ge 0}\to\mathbb R_{\ge 0}$ such that the series
$$\hat P_\Gamma(s)=\sum_{\gamma\in\Gamma} h\left(e^{sd(i,\gamma(i))}\right) e^{-sd(i,\gamma(i))}$$
has critical exponent $\delta(\Gamma)$, $\hat P_{\Gamma}(\delta(\Gamma))=+\infty$, and
for any $\epsilon>0$, there exists $\Theta_\epsilon$ so that  if $t>1$ and $\theta>\Theta_\epsilon$, then $h(\theta t)\le t^\epsilon h(\theta)$.
\end{lemma}

One then defines 
$$ \mu_s=\frac{1}{\hat P_\Gamma(s)}\sum_{\gamma\in\Gamma}   h\left(e^{sd(i,\gamma(i))}\right) e^{-sd(i,\gamma(i))}\mathcal D_{\gamma(i)}$$
and proceeds much as above. 
\end{proof}

Given a Patterson--Sullivan measure $\mu$ for a non-elementary Fuchsian group $\Gamma$, one may then construct a measure on $T^1X_\Gamma$ which is supported on $T^1X_\Gamma^{nw}$.  We first define
the measure 
$$d\tilde m(w,z,t)=e^{\delta(\Gamma) G(w,z)}d\mu(w) d\mu(z) ds(t)$$
on $T^1X_\Gamma$, where $ds$ is the Lebesgue measure on $\Rb$. For the same reasons as the Lebesgue measure on $T^1X_\Gamma$, $\tilde m$ is $\Gamma$-invariant, $\varphi_s$-invariant, and is supported on $\Lambda(\Gamma)^{(2)}\times\mathbb R$.
Therefore, $\tilde m$ descends to a $\varphi_s$-invariant measure $m$ supported on $T^1X_\Gamma^{nw}$, which we call the \emph{Bowen--Margulis--Sullivan (BMS) measure}.

A key tool used to analyze the Patterson--Sullivan measure is Sullivan's Shadow lemma which provides estimates of the Patterson--Sullivan measure on intervals in $\partial\mathbb H^2$. For any $x,y\in\Hb^2$, the \emph{shadow} from $x$ of a ball of radius $r$ about a point $y$ is the set of endpoints of geodesic rays emanating
from $x$ which intersect the ball. More concretely,
$$\mathcal O_r(x,y)=\{z\in \partial\mathbb H^2 : d(\overline{xz},y)<r\}.$$
In the metric on $ \partial\mathbb H^2$ given by the identification $\partial\mathbb H^2\cong S^1$, the shadow $S_r(i,y)$ is an interval of radius roughly $e^{-d(i,y)}$.

\begin{theorem}[{Sullivan's Shadow Lemma \cite[Prop. 3]{sullivan-density}}]
Suppose $\Gamma$ is a non-elementary Fuchsian group and $\mu$ is a Patterson--Sullivan measure for $\Gamma$. There exists $R_0>0$ such that: if $r \geq R_0$, then there exists $C=C(r)$ so that
$$\frac{1}{C} e^{-\delta(\Gamma)d(i,\gamma(i))}\le \mu(S_r(i,\gamma(i))) \le C e^{-\delta(\Gamma)d(i,\gamma(i))}.$$
\end{theorem}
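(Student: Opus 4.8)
The plan is to use the quasi-invariance of $\mu$ to transport the shadow $S_r(i,\gamma(i))$ back to a shadow \emph{based at} the orbit point $\gamma^{-1}(i)$, where two features become visible: the Radon--Nikodym weight is essentially constant, and the shadow is comparatively fat. Concretely, since isometries take shadows to shadows, $S_r(i,\gamma(i))=\gamma\cdot S_r(\gamma^{-1}(i),i)$, and the defining identity $d\gamma_*\mu/d\mu=e^{-\delta(\Gamma)B(\gamma^{-1},\cdot)}$ applied with $\gamma^{-1}$ in place of $\gamma$, together with $(\gamma^{-1})_*\mu=\mu\circ\gamma^{-1}$, gives
$$\mu\big(S_r(i,\gamma(i))\big)=\int_{S_r(\gamma^{-1}(i),\,i)}e^{-\delta(\Gamma)B(\gamma,z)}\,d\mu(z)=\int_{S_r(\gamma^{-1}(i),\,i)}e^{\delta(\Gamma)\,b(z,\gamma^{-1}(i))}\,d\mu(z),$$
using $B(\gamma,z)=-b(z,\gamma^{-1}(i))$. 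The next step is a Busemann estimate on shadows: if $z\in S_r(p,i)$, then along the unit-speed ray $c$ from $p$ to $z$, which passes within $r$ of $i$, one compares $d(c(t),i)$ with $d(c(t),p)=t$ and lets $t\to\infty$ to get $\big|\,b(z,p)+d(i,p)\,\big|\leq 2r$. Taking $p=\gamma^{-1}(i)$ and using $d(i,\gamma^{-1}(i))=d(i,\gamma(i))$, the integrand above lies between $e^{-2\delta(\Gamma)r}e^{-\delta(\Gamma)d(i,\gamma(i))}$ and $e^{2\delta(\Gamma)r}e^{-\delta(\Gamma)d(i,\gamma(i))}$, so the theorem reduces to showing that $\mu\big(S_r(\gamma^{-1}(i),i)\big)$ is bounded above and below by positive constants depending on $r$ but not on $\gamma$.

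The upper bound is trivial since $\mu$ is a probability measure: $\mu\big(S_r(\gamma^{-1}(i),i)\big)\leq 1$, so $C(r)=e^{2\delta(\Gamma)r}$ works on the right. For the lower bound I would first record that $\mu$ has no atoms. Indeed, were $\mu$ to have an atom, let $\eta_0$ be one of maximal mass $a^*>0$; then for every $\gamma\in\Gamma$ the inequality $\mu(\{\gamma\eta_0\})=e^{\delta(\Gamma)b(\eta_0,\gamma^{-1}(i))}a^*\leq a^*$ forces $b(\eta_0,\gamma(i))\leq 0$ (replacing $\gamma$ by $\gamma^{-1}$), i.e.\ the whole orbit $\Gamma(i)$ avoids the open horoball through $i$ based at $\eta_0$; but $\eta_0$ lies in $\mathrm{supp}(\mu)\subseteq\Lambda(\Gamma)=\overline{\Gamma(i)}\cap\partial\mathbb{H}^2$, so some $\gamma_n(i)\to\eta_0$, forcing $b(\eta_0,\gamma_n(i))\to+\infty$, a contradiction. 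Since $\partial\mathbb{H}^2$ is compact, non-atomicity upgrades to the uniform statement $\sup_{\zeta}\mu\big(B(\zeta,\epsilon)\big)\to 0$ as $\epsilon\to 0$, where $B(\zeta,\epsilon)\subseteq\partial\mathbb{H}^2$ denotes the $\epsilon$-ball. The remaining, and decisive, input is geometric: for every $p\in\mathbb{H}^2$ the complement $\partial\mathbb{H}^2\setminus S_r(p,i)$ is contained in the $\epsilon(r)$-ball about the endpoint of the ray from $i$ through $p$, where $\epsilon(r)\to 0$ as $r\to\infty$ uniformly in $p$. (When $d(i,p)\leq r$ the shadow is all of $\partial\mathbb{H}^2$; when $d(i,p)>r$ one checks this by an explicit computation in the upper half-plane, normalizing $p$ to lie on the positive imaginary axis, where $S_r(p,i)$ contains the Euclidean interval $(-\sinh r,\sinh r)\subseteq\partial\mathbb{H}^2$ and the ``away'' endpoint is $\infty$.) Granting this, fix $R_0$ so large that $\sup_\zeta\mu\big(B(\zeta,\epsilon(R_0))\big)\leq\tfrac12$; then for $r\geq R_0$ we get $\mu\big(S_r(\gamma^{-1}(i),i)\big)\geq\tfrac12$ for all $\gamma$, and $C(r)=2e^{2\delta(\Gamma)r}$ works in both inequalities.

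The main obstacle is this last geometric claim, because it runs against one's first intuition. The shadow $S_r(i,y)$ of a ball seen \emph{from the base point $i$} shrinks to a point, of radius $\asymp e^{-d(i,y)}$, as $y\to\partial\mathbb{H}^2$ (as recalled before the statement); one might expect the same of $S_r(p,i)$ as $p\to\partial\mathbb{H}^2$, but in fact $S_r(p,i)$ does \emph{not} shrink --- it fills all of $\partial\mathbb{H}^2$ except a small arc, whose size tends to $0$ only as $r\to\infty$, lying ``behind'' $p$. Making this precise and uniform in $p$, so that a single threshold $R_0$ works for the entire orbit $\Gamma(i)$, is where the real work lies; everything else is either formal manipulation of the Busemann cocycle or a one-line consequence of $\mu$ being a non-atomic probability measure.
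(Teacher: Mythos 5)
Your overall architecture is the same as the paper's: push the shadow back to $S_r(\gamma^{-1}(i),i)$ via quasi-invariance, use the estimate $\big|b(z,p)+d(i,p)\big|\le 2r$ on shadows to make the Radon--Nikodym weight essentially constant equal to $e^{-\delta(\Gamma)d(i,\gamma(i))}$, and reduce everything to a uniform positive lower bound on $\mu\big(S_r(\gamma^{-1}(i),i)\big)$. All of that, including your geometric claim that $\partial\mathbb H^2\setminus S_r(p,i)$ lies in a ball of radius $\epsilon(r)\to 0$ about the endpoint ``behind'' $p$ (the quantitative form of the paper's qualitative statement that these complements shrink to a point), is correct. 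The gap is in your proof of non-atomicity. From ``the orbit avoids the open horoball through $i$ based at $\eta_0$'' you conclude a contradiction because ``some $\gamma_n(i)\to\eta_0$, forcing $b(\eta_0,\gamma_n(i))\to+\infty$.'' That implication is false: a sequence can converge to a boundary point while its Busemann values stay bounded (in the upper half-plane, $n+i\to\infty$ while $b(\infty,n+i)$ is constant). This is exactly what happens at a bounded parabolic fixed point, where the orbit accumulates at the point while remaining outside a fixed horoball. Worse, non-atomicity is the wrong target: the theorem is asserted for an \emph{arbitrary} Patterson--Sullivan measure, and for convergence-type groups such measures need not be non-atomic --- the corollary following the Shadow Lemma only rules out atoms at \emph{conical} limit points, and atoms at non-conical points are genuinely possible. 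So a proof routed through $\sup_\zeta\mu(B(\zeta,\epsilon))\to 0$ cannot cover the stated generality.

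The repair is small and lands you on the paper's argument. All you need is that no single point carries the full mass, i.e.\ $\mu$ is not a Dirac mass; if it were, quasi-invariance would force $\Gamma$ to fix that point, contradicting non-elementarity. Your own compactness argument then gives: there exist $\epsilon>0$ and $c>0$ with $\sup_\zeta\mu\big(\overline{B(\zeta,\epsilon)}\big)\le 1-c$ (otherwise a subsequential limit of near-full-mass balls would produce an atom of mass $1$). Choosing $R_0$ with $\epsilon(R_0)\le\epsilon$, your geometric claim yields $\mu\big(S_r(\gamma^{-1}(i),i)\big)\ge c$ for all $r\ge R_0$ and all $\gamma$, which is the constant $\epsilon_0$ the paper obtains by its contradiction argument. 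With that substitution your proof is complete and is essentially the paper's.
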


\begin{proof}[Sketch of proof] One first shows that there exists $\epsilon_0>0$ and $R_0>0$ so that
$$\mu(\mathcal O_{R_0}(\gamma(i),i))\ge \epsilon_0$$
for all $\gamma\in \Gamma$. If not, then there exists $\{\gamma_n\}\subset \Gamma$ so that
\[\lim_{n\to\infty}\mu(\mathcal O_n(\gamma_n(i),i))=0.\] 
One may then pass to a subsequence so that either $\mathcal O_n(\gamma_n(i),i))\to \partial \mathbb H^2$
(if $d(\gamma_n(i),i)<n$ for all $i$) or  $\mathcal O_n(\gamma_n(i),i))\to \partial \mathbb H^2-\{z\}$ (if $d(\gamma_n(i),i)\ge n$ for all $i$ and $\gamma_n(i)\to z$). We obtain
a contradiction, since by the quasi-invariance property and the assumption that $\Gamma$ is non-elementary, $\mu( \partial \mathbb H^2-\{z\} )$ is always non-zero.

Now, suppose $r \ge R_0$. The quasi-invariance property implies that for all $\gamma\in\Gamma$,
$$\mu(\mathcal O_r(\gamma^{-1}(i),i))=\gamma_*(\mu)(\mathcal O_r(i,\gamma(i)))=\int_{\mathcal O_r(i,\gamma(i))}e^{-\delta(\Gamma)B(\gamma^{-1},z)}\ d\mu.$$
One may then check that if  $z\in \mathcal O_r(i,\gamma(i))$, then
$$\big|d(i,\gamma(i))+B(\gamma^{-1},z)\big|\le 2r.$$
It then follows that
$$e^{-2r\delta(\Gamma)}e^{\delta(\Gamma)d(i,\gamma(i))} \mu(\mathcal O_r(i,\gamma(i)))\le \int_{\mathcal O_r(i,\gamma(i))}e^{-\delta(\Gamma)B(\gamma^{-1},z)}\ d\mu=\mu(\mathcal O_r(\gamma^{-1}(i),i))\le 1$$
and
$$\epsilon_0\le\mu(\mathcal O_r(\gamma^{-1}(i),i))=\int_{\mathcal O_r(i,\gamma(i))}e^{-\delta(\Gamma)B(\gamma^{-1},z)}\ d\mu\le e^{2r\delta(\Gamma)}e^{\delta(\Gamma)d(i,\gamma(i))} \mu(\mathcal O_r(i,\gamma(i))),$$
so we may conclude that the Shadow Lemma holds with $C(r)=\frac{e^{2r\delta(\Gamma)}}{\epsilon_0}$.
\end{proof}

We will consider two special types of  points in the limit set of a Fuchsian group $\Gamma$. A point $z\in\Lambda(\Gamma)$ is a \emph{conical limit point}, if there exists $r>0$ and
a sequence $\{\gamma_n\}\subset \Gamma$ so that $\gamma_n(i)\to z$ and $d(\gamma_n(i),\overline{iz})< r$ for all $n$.  
We let $\Lambda^{con}(\Gamma)$ denote the set of  all conical limit points.
A point $z\in\Lambda(\Gamma)$
is a \emph{bounded parabolic fixed point} if it is a fixed point of a parabolic element $\alpha\in\Gamma$ and $\langle \alpha\rangle$ acts cocompactly on
$\Lambda(\Gamma)-\{z\}$. (In the case of $\mathsf{PSL}(2,\mathbb R)$ every parabolic fixed point is bounded, but we give this more complicated 
definition to mimic the more general case.)

We say that a Fuchsian group $\Gamma$ is \emph{convex cocompact} if every point in $\Lambda(\Gamma)$ is conical. 
Geometrically, this is equivalent to $X_\Gamma$ containing a (non-empty) convex, compact subset bounded by finitely many closed geodesics.
We say that $\Gamma$ is \emph{geometrically finite} if every point in $\Lambda(\Gamma)$ is conical or a bounded parabolic fixed point. 
Geometrically, it is equivalent to $X_\Gamma$ containing a convex, finite area submanifold bounded by finitely many closed geodesics.

Sullivan's Shadow Lemma has the following nearly immediate consequences:

\begin{corollary}\label{shadow cor}
If $\Gamma$ is a non-elementary Fuchsian group and $\mu$ is a Patterson--Sullivan measure for
$\Gamma$, then:
\begin{enumerate}
\item
If $z\in\Lambda^{con}(\Gamma)$, then $\mu(\{z\})=0$.
\item
If $P_\Gamma(\delta(\Gamma))<+\infty$, then $\mu(\Lambda^{con}(\Gamma))=0$.
\end{enumerate}
In particular, if $\Gamma$ is convex cocompact, then $P_\Gamma(\delta(\Gamma))=+\infty$ and $\mu$ has no atoms.
\end{corollary}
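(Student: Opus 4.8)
The plan is to deduce both statements from Sullivan's Shadow Lemma together with a Borel--Cantelli argument. For part (1), fix a conical limit point $z$, so there exist $r>0$ and a sequence $\{\gamma_n\}\subset\Gamma$ with $\gamma_n(i)\to z$ and $d(\gamma_n(i),\overline{iz})<r$ for all $n$. The conicality condition forces $z$ to lie in every shadow $S_{r'}(i,\gamma_n(i))$ for a suitable fixed $r'\ge R_0$ depending only on $r$ (enlarging $r$ to exceed $R_0$ if necessary): since $\gamma_n(i)$ stays within bounded distance of the ray $\overline{iz}$, the endpoint of that ray is shadowed by a ball of fixed radius about $\gamma_n(i)$. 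Hence $\mu(\{z\})\le\mu(S_{r'}(i,\gamma_n(i)))\le C(r')e^{-\delta(\Gamma)d(i,\gamma_n(i))}$ by the Shadow Lemma, and since $d(i,\gamma_n(i))\to\infty$ (as $\gamma_n(i)\to z\in\partial\Hb^2$ and $\Gamma$ is discrete, passing to a subsequence with $d(i,\gamma_n(i))$ strictly increasing), the right-hand side tends to $0$. Therefore $\mu(\{z\})=0$.

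For part (2), assume $P_\Gamma(\delta(\Gamma))<+\infty$. The idea is that $\Lambda^{con}(\Gamma)$ is contained, up to a null set, in a $\limsup$ of shadows whose $\mu$-masses are summable. Concretely, for each integer $k\ge R_0$, let
$$E_k=\bigcup_{\gamma\in\Gamma,\ d(i,\gamma(i))\ge k}S_k(i,\gamma(i)),$$
and set $E=\bigcap_{k}E_k$. Every conical limit point $z$ lies in $E$: if $z$ is $r$-conical via $\{\gamma_n\}$, then for each $k\ge\max(r,R_0)$ one has $z\in S_k(i,\gamma_n(i))$ for all large $n$ (those with $d(i,\gamma_n(i))\ge k$), so $z\in E_k$. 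Now estimate $\mu(E_k)$ using subadditivity and the Shadow Lemma:
$$\mu(E_k)\le\sum_{d(i,\gamma(i))\ge k}\mu(S_k(i,\gamma(i)))\le C(k)\sum_{d(i,\gamma(i))\ge k}e^{-\delta(\Gamma)d(i,\gamma(i))}.$$
Since $P_\Gamma(\delta(\Gamma))=\sum_{\gamma\in\Gamma}e^{-\delta(\Gamma)d(i,\gamma(i))}<+\infty$ by hypothesis, the tail sum $\sum_{d(i,\gamma(i))\ge k}e^{-\delta(\Gamma)d(i,\gamma(i))}$ tends to $0$ as $k\to\infty$. Thus $\mu(E_k)\to0$, and since $\mu(\Lambda^{con}(\Gamma))\le\mu(E)\le\inf_k\mu(E_k)=0$, we conclude $\mu(\Lambda^{con}(\Gamma))=0$.

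The main obstacle, and the point requiring the most care, is the geometric claim underlying both parts: that a conical approach $\gamma_n(i)\to z$ with $d(\gamma_n(i),\overline{iz})<r$ actually places $z$ inside a shadow $S_{r'}(i,\gamma_n(i))$ of controlled radius $r'=r'(r)$. This is a routine hyperbolic-geometry estimate — one compares the geodesic $\overline{i\,\gamma_n(i)}$ with the ray $\overline{iz}$, using that they share the endpoint $i$ and that $\gamma_n(i)$ is within $r$ of $\overline{iz}$, to show $\overline{i z}$ passes within $r+O(1)$ of $\gamma_n(i)$ — but it is the one place where the definitions of "conical" and "shadow" must be reconciled. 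Finally, the last sentence of the corollary is immediate: if $\Gamma$ is convex cocompact then $\Lambda(\Gamma)=\Lambda^{con}(\Gamma)$, so part (2) would force $\mu(\Lambda(\Gamma))=0$, contradicting that $\mu$ is a probability measure supported on $\Lambda(\Gamma)$; hence $P_\Gamma(\delta(\Gamma))=+\infty$, and then part (1) applies to every point of $\Lambda(\Gamma)$, so $\mu$ has no atoms.
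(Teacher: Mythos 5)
Part (1) and the concluding ``in particular'' are correct and essentially follow the paper's argument. (One remark: with the definition $\mathcal O_r(x,y)=\{z\in\partial\mathbb H^2:d(\overline{xz},y)<r\}$ used here, the conical condition $d(\gamma_n(i),\overline{iz})<r$ is \emph{verbatim} the statement $z\in\mathcal O_r(i,\gamma_n(i))$, so the hyperbolic-geometry comparison you single out as the main obstacle is not needed; one only enlarges $r$ so that $r\ge R_0$, using that shadows increase with the radius.)

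Part (2), however, has a genuine gap. You cover the conical limit set by the sets $E_k=\bigcup_{d(i,\gamma(i))\ge k}S_k(i,\gamma(i))$, in which the shadow radius grows together with the distance cutoff, and then bound $\mu(E_k)\le C(k)\sum_{d(i,\gamma(i))\ge k}e^{-\delta(\Gamma)d(i,\gamma(i))}$. But the constant in the Shadow Lemma depends on the radius: the proof gives $C(r)=e^{2r\delta(\Gamma)}/\epsilon_0$, and no bounded choice is possible, since for fixed $\gamma$ a shadow of large radius has $\mu$-measure close to $1$ while $e^{-\delta(\Gamma)d(i,\gamma(i))}$ is a fixed small number. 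So $C(k)\to\infty$ while the tail sum tends to $0$ at a rate that mere convergence of the Poincar\'e series does not control, and the product need not tend to $0$; the conclusion $\mu(E_k)\to 0$ does not follow. (There is also a smaller slip: a point that is only $r$-conical for large $r$ is shown to lie in $E_k$ only for $k\ge\max(r,R_0)$, so the claimed inclusion $\Lambda^{con}(\Gamma)\subset\bigcap_k E_k$ is not what you proved.) The repair is to decouple the two parameters, which is exactly what the paper does: fix $r>R_0$, let $\Lambda^{con}_r(\Gamma)$ be the set of $r$-conical points, and note that for any enumeration $\{\gamma_n\}$ of $\Gamma$ and any $N$ one has $\Lambda^{con}_r(\Gamma)\subset\bigcup_{n\ge N}\mathcal O_r(i,\gamma_n(i))$, whence $\mu(\Lambda^{con}_r(\Gamma))\le C(r)\sum_{n\ge N}e^{-\delta(\Gamma)d(i,\gamma_n(i))}\to 0$ as $N\to\infty$ with $r$ (hence $C(r)$) fixed; then $\Lambda^{con}(\Gamma)=\bigcup_{r\in\mathbb N}\Lambda^{con}_r(\Gamma)$ is a countable increasing union of null sets.
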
 

\begin{proof}[Sketch of proof] 
(1) Since  $z$ is conical, there exists a sequence $\{\gamma_n\}$ in $\Gamma$ and $r>0$ so that $\gamma_n(i)\to z$ and 
$d(\gamma_n(i),\overline{iz})<r$. In particular,
$z\in \mathcal O_r(i,\gamma_n(i))$ for all $n$. We may assume that $r>R_0$, where $R_0$ is the constant from the Shadow Lemma. Since $d(i,\gamma_n(i))\to\infty$,
the Shadow Lemma implies that $\mu(\mathcal O_r(i,\gamma_n(i)))\to 0$, which implies that $\mu(\{z\})=0$.

(2) Suppose $P_\Gamma(\delta(\Gamma))<+\infty$.
For any $r>0$, we say that $z\in \Lambda^{con}(\Gamma)$ is \emph{$r$-conical} with respect to $i$ if there exists a sequence $\{\gamma_n\}$ in $\Gamma$ so that $\gamma_n(i)\to z$ and $d(\gamma_n(i),\overline{iz})<r$. 
Let $\Lambda^{con}_r(\Gamma)$ denote the set of $r$-conical limit points with respect to $i$. Notice that
$$\Lambda^{con}(\Gamma)=\bigcup_{r>0}  \Lambda^{con}_r(\Gamma).$$
is an increasing family of subsets, so  if suffices to prove that if $r>R_0$, then $\mu( \Lambda^{con}_r(\Gamma))=0.$

Fix an enumeration $\{\gamma_1,\gamma_2,\ldots\}$ of $\Gamma$.
If $z\in \Lambda^{con}_r(\Gamma)$, then $z\in \mathcal O_r(i,\gamma_m(i))$ for infinitely many values of $m$, so for any $N$
$$ \Lambda^{con}_r(\Gamma)\subset\bigcup_{n\ge N} \mathcal O_r(i,\gamma_n(i)).$$
Therefore, by the Shadow Lemma,
$$\mu(\Lambda^{con}_r(\Gamma))\le \sum_{n=N}^\infty \mu(\mathcal O_r(i,\gamma_n(i))\le C(r)\sum_{n=N}^\infty e^{-\delta(\Gamma)d(i,\gamma_n(i))}$$
for any $N$.
However, since $P_\Gamma(\delta(\Gamma))<+\infty$, it follows that
\[\lim_{N\to\infty} \sum_{n=N}^\infty e^{-\delta(\Gamma)d(i,\gamma_n(i))}= 0\quad\text{so}\quad  \mu(\Lambda^{con}_r(\Gamma))=0.\qedhere\]
\end{proof}

A simple but beautiful argument of Dal'bo--Otal--Peign\'e  \cite{DOP}, yields the following  (extension of a) theorem of Brooks:

\begin{theorem}[Brooks \cite{brooks}] \label{entropy drop}
Suppose $\Gamma$ is a non-elementary Fuchsian group.
If $\Gamma_0\subset \Gamma$ is a proper subgroup such that $\Lambda(\Gamma_0)\subset\Lambda(\Gamma)$ is a proper subset and $P_{\Gamma_0}(\delta(\Gamma_0))=+\infty$, then 
$$\delta(\Gamma_0)<\delta(\Gamma).$$
\end{theorem}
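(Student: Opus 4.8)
The plan is to compare the two Poincaré series term-by-term using a fundamental domain argument of Dal'bo--Otal--Peigné. Since $\delta(\Gamma_0)\le \delta(\Gamma)$ automatically (the sub-orbit grows no faster than the orbit), the issue is ruling out equality. So suppose for contradiction that $\delta(\Gamma_0)=\delta(\Gamma)=:\delta$, and set $s=\delta$. By hypothesis $P_{\Gamma_0}(\delta)=+\infty$, so it suffices to produce a finite upper bound for $P_{\Gamma_0}(\delta)$, or more precisely to compare it against a convergent sum.

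The key step is a geometric coset decomposition. Pick $z_0\in\Lambda(\Gamma)\setminus\Lambda(\Gamma_0)$; since $\Lambda(\Gamma_0)$ is closed and $\Gamma_0$-invariant and the complement is nonempty, one can choose an open neighborhood and exploit that the $\Gamma_0$-orbit of $i$ ``shadows'' the limit set $\Lambda(\Gamma_0)$, which misses a definite portion of $\partial\mathbb H^2$ near $z_0$. Concretely, writing $\Gamma$ as a disjoint union of cosets $\Gamma_0\alpha$ with $\alpha$ ranging over a set of representatives, one has
$$P_\Gamma(s)=\sum_{\Gamma_0\alpha}\ \sum_{\gamma\in\Gamma_0} e^{-s d(i,\gamma\alpha(i))}.$$
Using the triangle inequality $d(i,\gamma\alpha(i))\le d(i,\gamma(i))+d(i,\alpha(i))$ in one direction and a reverse estimate exploiting that $\alpha(i)$ lies in a ``cone'' pointing away from $\Lambda(\Gamma_0)$ in the other, one shows that each coset sum is comparable to $P_{\Gamma_0}(s)$ times a factor $e^{-s\,c(\alpha)}$ where $c(\alpha)\to\infty$ as the coset escapes. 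The upshot is an inequality of the form
$$P_\Gamma(s)\ \geq\ P_{\Gamma_0}(s)\cdot\Big(\text{number of cosets, suitably weighted}\Big),$$
and since $\Gamma_0$ is a \emph{proper} subgroup there are infinitely many cosets, so if $P_{\Gamma_0}(\delta)=+\infty$ then already $P_\Gamma(s)$ diverges for some $s>\delta(\Gamma)$, contradicting the definition of the critical exponent. Equivalently, one runs the argument in the convergent regime $s>\delta$ and takes $s\downarrow\delta$.

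The main obstacle is making the ``cone pointing away from $\Lambda(\Gamma_0)$'' estimate quantitative: one needs that for $\gamma\in\Gamma_0$ and a fixed coset representative $\alpha$ with $\alpha(i)$ far out, the point $\gamma\alpha(i)$ is genuinely farther from $i$ than $\gamma(i)$ is, by an amount bounded below independent of $\gamma$. This is where the hypothesis $\Lambda(\Gamma_0)\subsetneq\Lambda(\Gamma)$ is essential — it guarantees the $\Gamma_0$-orbit stays uniformly away from the direction of $z_0$, so that the geodesics from $i$ to $\gamma\alpha(i)$ and from $i$ to $\gamma(i)$ diverge, giving the required gain via a hyperbolic-geometry comparison (essentially that in $\mathbb H^2$, $d(i,pq)\ge d(i,p)+d(i,q)-2(\,p\,|\,q^{-1})_i - O(1)$ with the Gromov product term controlled). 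The role of $P_{\Gamma_0}(\delta(\Gamma_0))=+\infty$ is precisely to force the divergence to propagate: without it one only concludes $\delta(\Gamma_0)\le\delta(\Gamma)$, which is why the Dal'bo--Otal--Peigné refinement of Brooks' theorem carries this hypothesis. Once the coset estimate is in hand, the conclusion is immediate from the root-test characterization of $\delta$.
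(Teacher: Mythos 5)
Your proposal takes a genuinely different route from the paper, and as written it does not close. The paper's argument is measure-theoretic, not a series comparison: it fixes a Patterson--Sullivan measure $\mu$ for the \emph{ambient} group $\Gamma$, an open window $W\subset\Lambda(\Gamma)$ with $\overline W\cap\Lambda(\Gamma_0)=\emptyset$, notes that proper discontinuity of $\Gamma_0$ on $\partial\mathbb H^2-\Lambda(\Gamma_0)$ bounds the multiplicity $N$ of overlaps among $\{\gamma(W)\}_{\gamma\in\Gamma_0}$, and uses the uniform angle bound away from $\Lambda(\Gamma_0)$ to get $|B(\gamma,z)-d(i,\gamma(i))|\le D$ on $W$ for all but finitely many $\gamma\in\Gamma_0$. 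Quasi-invariance then gives $\mu(\gamma(W))\ge e^{-\delta D}e^{-\delta d(i,\gamma(i))}\mu(W)$, and summing yields $1\ge\frac{e^{-\delta D}\mu(W)}{N}\sum_{\gamma\in\Gamma_0-F}e^{-\delta d(i,\gamma(i))}$, which is finite --- directly contradicting $P_{\Gamma_0}(\delta)=+\infty$ if $\delta(\Gamma_0)=\delta(\Gamma)=\delta$. Note that the properness of $\Gamma_0$ in $\Gamma$ plays no role here; only $\Lambda(\Gamma_0)\subsetneq\Lambda(\Gamma)$ does.

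The gap in your version is the final deduction. From your coset decomposition you obtain (at best) a lower bound $P_\Gamma(s)\ge P_{\Gamma_0}(s)\cdot\sum_\alpha e^{-s\,c(\alpha)}$. This yields no contradiction: at $s=\delta$ the conclusion $P_\Gamma(\delta)=+\infty$ is perfectly consistent with $\delta(\Gamma)=\delta$ (divergence \emph{at} the critical exponent is allowed, and indeed typical), while for $s>\delta=\delta(\Gamma_0)$ the factor $P_{\Gamma_0}(s)$ converges and the weighted coset sum is itself dominated by (a constant times) a sub-sum of $P_\Gamma(s)$, hence also converges --- so ``infinitely many cosets'' buys you nothing. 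Your hypothesis $P_{\Gamma_0}(\delta(\Gamma_0))=+\infty$ is never genuinely used. The series-comparison version of Dal'bo--Otal--Peign\'e that does work requires an \emph{iterated} (ping-pong/free-product) structure: choose a loxodromic $h\in\Gamma$ with fixed points off $\Lambda(\Gamma_0)$, estimate lengths of alternating words $(\gamma_1 h^{n})(\gamma_2 h^{n})\cdots$ additively up to a uniform constant, and bound $P_\Gamma(s)$ below by a geometric series $\sum_k A(s)^k$ where $A(s)=C\sum_{\gamma\in\Gamma_0-F}e^{-s d(i,\gamma(i))}$. Divergence of $P_{\Gamma_0}$ at $\delta(\Gamma_0)$ is exactly what forces $A(s)>1$ for some $s_0>\delta(\Gamma_0)$, whence $P_\Gamma(s_0)=+\infty$ and $\delta(\Gamma)\ge s_0>\delta(\Gamma_0)$. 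Your single-level decomposition has no such self-reinforcing mechanism, so the contradiction you announce does not materialize.
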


\begin{proof}[Sketch of proof]
Let $\mu$ be a Patterson--Sullivan measure for $\Gamma$.
Fix an open subset $W\subset\Lambda(\Gamma)$ such that $\overline W \cap \Lambda(\Gamma_0)=\emptyset$. Since $\Gamma_0$ acts properly
discontinuously on $\partial\mathbb H^2-\Lambda(\Gamma_0)$, we see that  
$$
N: = \#\{ \gamma \in \Gamma_0 : \gamma W \cap W \neq \emptyset\}
$$
is finite. 

Since $\Lambda(\Gamma_0)$ is the set of accumulation points of $\{ \gamma(i) : \gamma \in \Gamma_0\}$, 
there is a finite subset $F\subset \Gamma_0$ and $\epsilon>0$ so  that the  angle between hyperbolic geodesics
$\overline{i\gamma(i)}$ and $\overline{iz}$ is at least  $\epsilon $
for all  $\gamma\in\Gamma_0-F$ and $z\in W$. This implies that there exists $D >0$ so that 
$$|B(\gamma,z)-d(i,\gamma(i))|\le D$$
for all  $\gamma\in\Gamma_0-F$ and $z\in W$.

Since $\Gamma_0\subset\Gamma$, it is immediate that $\delta(\Gamma)\ge\delta(\Gamma_0)$. 
Suppose for contradiction that  $\delta:=\delta(\Gamma)=\delta(\Gamma_0)$. Notice that 
$$\mu(\gamma(W))=(\gamma^{-1}_*\mu)(W)=\int_W  e^{-\delta B(\gamma,z)} d\mu(z)
\ge e^{-\delta D}e^{-\delta d(i,\gamma(i))} \mu(W)$$
for all  $\gamma\in\Gamma_0-F$. Since $P_{\Gamma_0}(\delta)=+\infty$,
$$1=\mu(\Lambda(\Gamma))\ge\frac{1}{N}\sum_{\gamma\in \Gamma_0}\mu(\gamma(W))\ge 
\frac{e^{-\delta D}\mu(W)}{N}\sum_{\gamma\in \Gamma_0-F} e^{-\delta d(i,\gamma(i))}=+\infty$$
which is a contradiction.
\end{proof}

If $\Gamma$ is a non-elementary Fuchsian group, $z\in\Lambda(\Gamma)$ is bounded parabolic fixed point and $H=\mathrm{Stab}_\Gamma(z)$ is its stabilizer, then $H$ is conjugate to $\langle w\to w+1\rangle$ and one may easily check that $\delta(H)=\frac{1}{2}$ and $P_H(\frac{1}{2})=+\infty$. Then by Theorem \ref{entropy drop}, $\delta(H)<\delta(\Gamma)$. Using this, one can examine the construction of a   Patterson--Sullivan measure $\mu$ given in Theorem \ref{psexists} to show that 
$\mu(\{z\})=0$. If $\Gamma$ is geometrically finite, $\Lambda(\Gamma)$ has at most countably many parabolic fixed points, so $\mu(\Lambda^{con}(\Gamma))=1$. Therefore, Corollary \ref{shadow cor} implies that if $\Gamma$ is geometrically finite, then $P_\Gamma(\delta(\Gamma))=+\infty$.
This gives the following corollary of  Brooks' theorem:

\begin{corollary} If $\Gamma$ is a non-elementary Fuchsian group and 
$\Gamma_0\subset \Gamma$ is an infinite index, geometrically finite subgroup, then
$$\delta(\Gamma_0)<\delta(\Gamma).$$
\end{corollary}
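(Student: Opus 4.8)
The plan is to deduce this corollary directly from Brooks' theorem (Theorem \ref{entropy drop}), so the entire task reduces to verifying its hypotheses for the pair $\Gamma_0\subset\Gamma$. There are three things to check: that $\Gamma_0$ is a proper subgroup, that $\Lambda(\Gamma_0)$ is a proper subset of $\Lambda(\Gamma)$, and that $P_{\Gamma_0}(\delta(\Gamma_0))=+\infty$. Properness of $\Gamma_0$ as a subgroup is immediate from the infinite index hypothesis. The divergence $P_{\Gamma_0}(\delta(\Gamma_0))=+\infty$ is exactly the statement, recorded in the paragraph preceding the corollary, that every geometrically finite group has divergent Poincar\'e series at its critical exponent; I would simply cite that. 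So the one genuine point is the strict inclusion $\Lambda(\Gamma_0)\subsetneq\Lambda(\Gamma)$.

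First I would dispose of a degenerate case: if $\Gamma_0$ is elementary, then $\Lambda(\Gamma_0)$ is finite while $\Lambda(\Gamma)$ is infinite (as $\Gamma$ is non-elementary), so the inclusion is automatically proper, and moreover $\delta(\Gamma_0)=0$ (for $\Gamma_0$ finite) or $\delta(\Gamma_0)=\tfrac12$ (for $\Gamma_0$ virtually cyclic, parabolic or hyperbolic) — in any case one still needs $\delta(\Gamma)>\tfrac12$, which again follows from Brooks once the proper-limit-set condition is in hand, so it is cleanest to run the same argument uniformly. Thus assume $\Gamma_0$ is non-elementary. To see $\Lambda(\Gamma_0)\ne\Lambda(\Gamma)$, suppose for contradiction that $\Lambda(\Gamma_0)=\Lambda(\Gamma)$. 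Since $\Gamma_0$ is geometrically finite, its convex core $C_0=\mathrm{conv}(\Lambda(\Gamma_0))/\Gamma_0$ has finite area, and the convex core $C$ of $\Gamma$ has the same universal cover $\mathrm{conv}(\Lambda(\Gamma))=\mathrm{conv}(\Lambda(\Gamma_0))$; the inclusion $\Gamma_0\subset\Gamma$ then induces a covering $C_0\to C$ of finite area surfaces with orbifold points, so this covering has finite degree, i.e. $[\Gamma:\Gamma_0]<\infty$, contradicting the infinite index hypothesis.

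With all three hypotheses of Theorem \ref{entropy drop} verified, that theorem gives $\delta(\Gamma_0)<\delta(\Gamma)$, which is the assertion of the corollary.

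The step I expect to be the main obstacle is the strict inclusion of limit sets, specifically the claim that equal limit sets force finite index when $\Gamma_0$ is geometrically finite. The area comparison of convex cores is the conceptually cleanest route, but making the finite-degree conclusion rigorous requires a little care about orbifold coverings and about the fact that a geometrically finite $\Gamma$ with $\Lambda(\Gamma)=\Lambda(\Gamma_0)$ is itself geometrically finite with convex core of the same finite area (so that the degree, being a ratio of finite areas, is finite). An alternative that avoids convex-core bookkeeping: $\Gamma_0$ geometrically finite with $\Lambda(\Gamma_0)=\Lambda(\Gamma)$ implies $\Gamma_0$ is a finitely generated group whose action on $\mathbb H^2$ has the same limit set as $\Gamma$; one can then invoke the fact that a finitely generated Fuchsian group is determined up to finite index by its limit set among groups containing it (equivalently, $\Gamma$ permutes the finitely many cusps and boundary geodesics of the core of $\Gamma_0$, so $\Gamma$ normalizes a finite-index subgroup of $\Gamma_0$). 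Either way, this is where essentially all the content beyond "quote Brooks" resides.
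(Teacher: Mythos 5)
Your proposal is correct and follows essentially the same route as the paper: the corollary is obtained by feeding the divergence $P_{\Gamma_0}(\delta(\Gamma_0))=+\infty$ for geometrically finite groups (established in the paragraph preceding the corollary) into Theorem \ref{entropy drop}. The only content you add is an explicit verification, via the finite-area convex core and multiplicativity of area under coverings, that infinite index forces $\Lambda(\Gamma_0)\subsetneq\Lambda(\Gamma)$ --- a hypothesis of Brooks' theorem that the paper leaves implicit --- and your argument for it is sound (with the minor caveat that when $\Gamma_0$ is finite one should just note $\delta(\Gamma_0)=0<\delta(\Gamma)$ directly, since $P_{\Gamma_0}(0)<+\infty$ there).
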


Hopf's argument \cite{hopf} for the ergodicity of the geodesic flows of closed hyperbolic manifolds
involves an analysis of the stable and unstable manifolds of the geodesic flow. The stable and unstable sets for
the geodesic flow on $T^1X_\Gamma^{nw}$  are simply the restriction of the well-studied stable and unstable manifolds
for the geodesic flow on $T^1X_\Gamma$. Hopf's argument  generalizes to show that the geodesic flow on
$T^1X_\Gamma^{nw}$ is ergodic with 
respect to the BMS measure when $P_\Gamma(\delta(\Gamma))=+\infty$. More generally, one can prove 
the following version of the celebrated Hopf--Tsuji--Sullivan dichotomy for Fuchsian groups.

\begin{theorem}[see Sullivan \cite{sullivan-density}]
\label{HST}
Suppose $\Gamma\subset\mathsf{PSL}(2,\mathbb R)$ is a non-elementary Fuchsian group and $\delta : = \delta(\Gamma)$.
\begin{itemize}
\item If 
$P_\Gamma^\phi(\delta)=+\infty$, 
then there exists a unique Patterson--Sullivan measure $\mu$  for $\Gamma$. Moreover:
\begin{enumerate}
\item $\mu(\Lambda^{con}(\Gamma))=1$.
\item The action of the geodesic flow on $T^1X_\Gamma$ is ergodic with respect to the Bowen--Margulis--Sullivan measure $m$.
\item The action of $\Gamma$ on $(\Lambda(\Gamma)^2, \mu \otimes \mu)$ is ergodic. 
\item The action of $\Gamma$ on $(\Lambda(\Gamma), \mu)$  is ergodic. 
\item The flow $\varphi_s$ on $T^1X_\Gamma^{nw}$ is ergodic and conservative.
\end{enumerate} 
\item If $P_\Gamma^\phi(\delta) < +\infty$ and  $\mu$ is any Patterson--Sullivan measure, then 
\begin{enumerate}
\item $\mu(\Lambda^{con}(\Gamma))=0$.
\item The action of the geodesic flow on $T^1X_\Gamma$ is not ergodic with respect to the Bowen--Margulis--Sullivan measure associated to $\mu$. %any Bowen--Margulis--Sullivan measure $m$.
\item The action of $\Gamma$ on $(\Lambda(\Gamma)^2, \mu \otimes \mu)$ is not ergodic. 
%\item The action of $\Gamma$ on $(\Lambda(\Gamma), \mu)$  is  not ergodic. 
\item The flow $\varphi_s$ on $T^1X_\Gamma^{nw}$ is non-ergodic and dissipative.
\end{enumerate} 
\end{itemize}
\end{theorem}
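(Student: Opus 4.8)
The plan is to hang both halves of the dichotomy on a single geometric statement --- whether the conical limit set has full Patterson--Sullivan measure --- and to move freely between the three objects in the theorem via the identification $T^1X_\Gamma^{nw}=(\Lambda(\Gamma)^{(2)}\times\mathbb R)/\Gamma$, on which $\varphi_s$ acts by translation in the $\mathbb R$-coordinate and $\Gamma$ acts diagonally (twisted by the Busemann cocycle) on $\Lambda(\Gamma)^{(2)}$. A $\varphi_s$-invariant function on $T^1X_\Gamma^{nw}$ lifts to a $\Gamma$-invariant function on $\Lambda(\Gamma)^{(2)}$ that is independent of the $\mathbb R$-coordinate, and, once one knows $\mu$ is atomless (conical atoms vanish by Corollary \ref{shadow cor}(1); a non-conical atom would span a proper countable $\Gamma$-invariant subset, which only helps non-ergodicity), the diagonal is $\mu\otimes\mu$--null, so ergodicity of $\varphi_s$ on $(T^1X_\Gamma^{nw},m)$ is \emph{equivalent} to ergodicity of $\Gamma$ on $(\Lambda(\Gamma)^2,\mu\otimes\mu)$. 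The latter forces ergodicity of $\Gamma$ on $(\Lambda(\Gamma),\mu)$ by applying it to sets $A\times A$. Thus in the divergent case items (2), (3), (5) are a single assertion and (4) is formal; in the convergent case items (2), (3) and the ergodicity half of (4) coincide, dissipativity being an additional (stronger) point.

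\textbf{The convergent case.} Here $\mu(\Lambda^{con}(\Gamma))=0$ is exactly Corollary \ref{shadow cor}(2). Since a limit point whose geodesic ray returns infinitely often to a fixed compact part of $X_\Gamma$ must be conical (take nearest orbit points at the return times), for $\mu\otimes\mu$--a.e. $(w,z)$ the geodesic from $w$ to $z$ leaves every compact subset of $X_\Gamma$ in both time directions; hence $\varphi_s$ is completely dissipative on $(T^1X_\Gamma^{nw},m)$. A completely dissipative measure-preserving flow with $\sigma$-finite invariant measure is non-ergodic once its orbit space is non-trivial (Hopf decomposition; the flow is then a translation flow on $Y\times\mathbb R$ with $Y$ not reduced to a point), which holds here. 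Through the dictionary this gives the non-ergodicity of $\varphi_s$ (hence (2) and (4)) and of $\Gamma$ on $(\Lambda(\Gamma)^2,\mu\otimes\mu)$ (hence (3)).

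\textbf{The divergent case.} The heart of the matter is $\mu(\Lambda^{con}(\Gamma))=1$. Fix an enumeration $\{\gamma_n\}$ of $\Gamma$; for $r>R_0$ write $\Lambda^{con}_r(\Gamma)=\limsup_n \mathcal O_r(i,\gamma_n(i))$, so that by the Shadow Lemma $\sum_n\mu(\mathcal O_r(i,\gamma_n(i)))\asymp\sum_n e^{-\delta d(i,\gamma_n(i))}=P_\Gamma(\delta)=+\infty$. Divergence of this series does not by itself give $\mu(\Lambda^{con}_r(\Gamma))=1$ because the shadows are not independent; the standard remedy is a quasi-independence estimate, again from the Shadow Lemma and quasi-invariance, controlling $\mu(\mathcal O)$ by its restriction to any ball $V\supseteq\mathcal O$, which powers a Lebesgue-density argument: were $\mu(\Lambda(\Gamma)\setminus\Lambda^{con}_r(\Gamma))>0$, a density point of that set would meet only finitely many shadows of radius $\sim r$, contradicting divergence of the series localised near it. This yields (1). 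Conservativity of $\varphi_s$ is then immediate: for $w,z\in\Lambda^{con}(\Gamma)$ the geodesic from $w$ to $z$ returns infinitely often to a fixed compact part of $X_\Gamma$ in both directions, so $\mu\otimes\mu$--a.e.\ orbit recurs. Ergodicity of $\varphi_s$ on $(T^1X_\Gamma^{nw},m)$ is the Hopf argument: a $\varphi_s$-invariant $F$ agrees a.e.\ with its Hopf (ratio) time-averages, which are constant along the stable, resp.\ unstable, leaves --- the restrictions to $T^1X_\Gamma^{nw}$ of the classical horospherical foliations --- and which coincide a.e.\ with each other; conservativity is precisely what licenses the ratio ergodic theorem and this coincidence, while the full support and atomlessness of $\mu$ make the leaves join $m$-a.e.\ pair of points, forcing $F$ constant. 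Via the dictionary this gives (3), (4), (5). For uniqueness: if $\mu_1,\mu_2$ are Patterson--Sullivan measures of dimension $\delta$, the Shadow Lemma applies to both, so every shadow $\mathcal O_r(i,\gamma(i))$ carries mass $\asymp e^{-\delta d(i,\gamma(i))}$ for each; a Vitali-type covering of $\Lambda^{con}(\Gamma)$ by such shadows makes $\mu_1,\mu_2$ mutually absolutely continuous with $d\mu_1/d\mu_2$ bounded above and below, and since $\mu_1,\mu_2$ obey the same Busemann transformation law $d\mu_1/d\mu_2$ is $\Gamma$-invariant, hence constant by the ergodicity just established, hence $\equiv 1$.

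\textbf{Main obstacle.} The two non-formal points are (i) the divergence Borel--Cantelli step giving $\mu(\Lambda^{con}(\Gamma))=1$, where the non-independence of shadows must be circumvented by a quasi-independence/density argument built on the Shadow Lemma, and (ii) making the Hopf argument run in the (possibly infinite-measure, non-compact) setting of $T^1X_\Gamma^{nw}$ --- verifying that conservativity is exactly what upgrades ``$F$ constant along leaves'' to ``$F$ constant'', and that the horospherical holonomies are measurable for $m$. Everything else is quoted (the Shadow Lemma, Corollary \ref{shadow cor}) or soft (Hopf decomposition, the flow/boundary dictionary).
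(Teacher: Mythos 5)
The paper does not actually prove Theorem~\ref{HST}: it is quoted from Sullivan with only the remark that Hopf's argument generalizes to give ergodicity in the divergent case, so there is no in-paper proof to compare against. Your sketch follows exactly the classical route that the citation points to: reduce everything to the single question of whether $\Lambda^{con}(\Gamma)$ has full or zero $\mu$-measure, pass back and forth between the flow on $T^1X_\Gamma^{nw}$ and the $\Gamma$-action on $\Lambda(\Gamma)^{(2)}$ via the Hopf parametrization, settle the convergent case by Corollary~\ref{shadow cor}(2) plus dissipativity, and settle the divergent case by a converse Borel--Cantelli argument plus the Hopf ratio-ergodic argument, with uniqueness following from the Shadow Lemma and ergodicity. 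This is the right architecture, and your reductions (items (2), (3), (5) being one statement once $\mu$ is atomless; (4) from (3) via $A\times A$; non-ergodicity from complete dissipativity once one rules out the measure living on a single flow orbit, which full support of $\mu$ on the perfect set $\Lambda(\Gamma)$ does) are all correct.

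The two steps you isolate as the main obstacles are indeed where all of Sullivan's work lives, and as written they are not yet proofs. In particular, your density-point argument for $\mu(\Lambda^{con}_r(\Gamma))=1$ quietly assumes that the divergence of $P_\Gamma(\delta)$ localises to the sub-series of $\gamma$ whose shadows lie in an arbitrarily small neighbourhood of a given density point; that localisation is itself a nontrivial consequence of quasi-invariance and is usually avoided by arguing in the other order: one first proves conservativity of the $\Gamma$-action on $(\Lambda(\Gamma)^{(2)},\mu\otimes\mu)$ directly from divergence via a second-moment (Kochen--Stone/quasi-independence) estimate on shadows, and then deduces that $\mu\otimes\mu$-a.e.\ pair has both endpoints conical, which gives $\mu(\Lambda^{con}(\Gamma))=1$. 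Similarly, the infinite-measure Hopf argument requires checking absolute continuity of the strong stable/unstable holonomies with respect to the conditional measures of $m$, which is where the precise form of the Gromov product density $e^{\delta G(w,z)}$ enters. Since you explicitly flag both points rather than claim them, I would call this a correct and honest outline of the standard proof rather than a complete argument; to make it self-contained you would need to supply the quasi-independence estimate for shadows and the leafwise absolute continuity, both of which are in Sullivan \cite{sullivan-density} and in Roblin's memoir \cite{roblin}.
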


We may regard $\mathbb H^d=\mathsf{SO}_0(d,1)/\mathsf{SO}(d)$, in which case $\mathsf{SO}_0(d,1)$ is  the group of orientation preserving isometries of $\Hb^d$.
We may choose the basepoint $x_0=[\mathsf{SO}(d)]\in\mathbb H^d$ and
identify $\partial \mathbb H^d$ with $T^1_{x_0}(\mathbb H^d)$ and hence with $S^{d-1}$. With these conventions all
the definitions and results go through for discrete  subgroups of $\mathsf{SO}_0(d,1)$ which are not virtually abelian,
where $x_0$ plays the role of $i$.

However, one key difference is that  any discrete,  finitely generated subgroup  of  $\mathsf{SO}_0(2,1)\cong \mathsf{PSL}(2,\mathbb R)$ is geometrically finite
and hence its Poincar\'e series diverges at its critical exponent. However, if $\Gamma$ is any discrete, finitely generated subgroup
of $\mathsf{SO}_0(3,1)$ which is not geometrically finite and whose limit set is not the entire sphere, then its critical exponent is two and its Poincar\'e series converges
at its critical exponent (see \cite{canary-laplace} and \cite{canary-survey}). Work of Greenberg \cite{greenberg}  shows that such groups are plentiful in boundaries of 
quasiconformal deformation spaces of geometrically finite subgroups of $\mathsf{SO}_0(3,1)$. 
If  $X=\mathbb H^3/\Gamma$ is a hyperbolic 3-manifold which fibers over the circle and $\Gamma_0$ is the subgroup of $\Gamma$ associated to a fibre,
then $\delta(\Gamma_0)=\delta(\Gamma)=2$ (see \cite{canary-laplace}), so the assumption in Brooks' theorem that $\Lambda(\Gamma_0)$ is a proper subset of
$\Lambda(\Gamma)$ is essential.

Roblin \cite{roblin} used techniques developed in Margulis' thesis \cite{margulis} to obtain counting results in this setting.

\begin{theorem}[Roblin \cite{roblin}] Let $\Gamma$ be a geometrically finite, torsion-free, but not virtually abelian, subgroup 
of $\mathsf{SO}_0(d,1)$, let $X:=\mathbb H^d/\Gamma$, and let $N_X(T)$ denote the number of closed geodesics on $X$ of length at most $T$. Then
$$N_X(T)\sim \frac{e^{\delta(\Gamma)T}}{\delta(\Gamma) T}.$$
\end{theorem}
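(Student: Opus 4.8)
The plan is to follow Margulis' original strategy, as adapted by Roblin: study the mixing of the geodesic flow on $T^1X$ with respect to the Bowen--Margulis--Sullivan measure $m$, and then convert mixing into an orbit-counting statement via a ``thickening'' argument. First I would invoke the structure theory for geometrically finite groups established above: since $\Gamma$ is geometrically finite and not virtually abelian, Theorem \ref{entropy drop} and the discussion following it show that $P_\Gamma(\delta(\Gamma))=+\infty$, so by the Hopf--Tsuji--Sullivan dichotomy (Theorem \ref{HST}) there is a unique Patterson--Sullivan measure $\mu$, the associated BMS measure $m$ is finite (this uses geometric finiteness, via Sullivan's Shadow Lemma applied near the bounded parabolic fixed points to control the cusp excursions), and the geodesic flow is ergodic and conservative on $T^1X_\Gamma^{nw}$. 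One then upgrades ergodicity to mixing: since the flow is the translation flow in the Hopf coordinates and $m$ has a local product structure $e^{\delta G}\,d\mu\,d\mu\,ds$, mixing follows from the fact that $\Gamma$ acts ergodically on $(\Lambda(\Gamma)^2,\mu\otimes\mu)$ together with a standard argument ruling out a nontrivial ``time-component'' in the spectral decomposition (Babillot's theorem, or Roblin's direct argument); the absence of an arithmetic obstruction uses that the length spectrum of a non-elementary group is non-arithmetic.

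Next I would set up the counting. Fix the basepoint $x_0$ and note that closed geodesics on $X$ of length at most $T$ correspond to conjugacy classes of primitive loxodromic elements $\gamma\in\Gamma$ with translation length $\ell(\gamma)\le T$; by a standard argument the primitive/non-primitive distinction and the passage between conjugacy classes and the orbit $\Gamma x_0$ only affects lower-order terms, so it suffices to obtain the asymptotics for $\#\{\gamma\in\Gamma: \ell(\gamma)\le T\}$, or equivalently — after another standard reduction comparing $\ell(\gamma)$ with $d(x_0,\gamma x_0)$ on the relevant portion of the group — for the orbit-counting function $\#\{\gamma\in\Gamma: d(x_0,\gamma x_0)\le T\}$. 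The heart of the matter is then the equidistribution statement: the measures
$$
\frac{1}{\#\Gamma(T)}\sum_{\gamma\in\Gamma(T)} \mathcal D_{\gamma x_0}
$$
on $\overline{\mathbb H^d}$ converge, after the correct normalization, to $\mu$, and more precisely
$$
\#\{\gamma\in\Gamma: d(x_0,\gamma x_0)\le T\}\sim \frac{\|\mu\|^2}{\delta(\Gamma)\,\|m\|}\, e^{\delta(\Gamma)T},
$$
which is extracted by integrating the mixing statement against a well-chosen pair of bump functions supported on small flow boxes around $x_0$ and $\gamma x_0$: a point in $T^1X$ returning near its starting flow box after time $\approx d(x_0,\gamma x_0)$ records exactly the group element $\gamma$, and mixing controls the number of such returns. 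Passing from the ball count $e^{\delta T}$ to the closed-geodesic count $e^{\delta T}/(\delta T)$ is a purely combinatorial summation: one writes $N_X(T)=\sum$ over primitive conjugacy classes and uses $\sum_{\gamma \text{ prim}, \ell(\gamma)\le T} 1 \sim \int_0^T \frac{\delta e^{\delta t}}{\delta t}\,dt \sim \frac{e^{\delta T}}{\delta T}$, the extra factor $1/(\delta T)$ coming from the fact that a single primitive geodesic of length $\ell$ contributes its iterates, of which there are $\lfloor T/\ell\rfloor$ with length $\le T$, and Abel summation against the $e^{\delta t}$ growth.

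The main obstacle is establishing that the BMS measure $m$ is \emph{finite} and that the flow is \emph{mixing} rather than merely ergodic, in the presence of cusps. Finiteness is where geometric finiteness is genuinely used and where the hypothesis $\delta(H)<\delta(\Gamma)$ for parabolic stabilizers $H$ (a consequence of Brooks' theorem, Theorem \ref{entropy drop}) enters decisively: it is exactly this strict inequality that makes the integral of the cusp excursion function against $m$ converge, and without it $m$ could be infinite and the method would break down. Mixing in the cusped case does not follow from soft arguments and requires either Roblin's analysis or Babillot's mixing theorem, both of which must handle the non-compactness of the support of $m$; this is the technically heaviest input. Everything downstream — the thickening/flow-box argument and the combinatorial passage to $N_X(T)$ — is then routine given finiteness and mixing, though care is needed near the cusps to ensure the flow boxes are chosen in the thick part so that the error terms really are lower order.
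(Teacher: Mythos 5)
The paper states this result without proof, citing Roblin, so I am judging your sketch on its own terms. Your high-level architecture is the right one and is indeed Roblin's: divergence of the Poincar\'e series via Theorem \ref{entropy drop} and Corollary \ref{shadow cor}, finiteness of the BMS measure for geometrically finite groups (where the strict inequality $\delta(H)<\delta(\Gamma)$ for parabolic stabilizers, or more precisely the Dal'bo--Otal--Peign\'e convergence criterion it implies, is exactly what controls the cusp excursions), mixing via non-arithmeticity of the length spectrum (Babillot/Roblin), and a flow-box argument. Those parts are fine.

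The genuine gap is in the passage from orbit counting to the closed-geodesic count, which you dismiss as ``standard reductions'' and ``purely combinatorial.'' First, the intermediate quantity $\#\{\gamma\in\Gamma:\ell(\gamma)\le T\}$ that you propose to count is infinite for all large $T$: translation length is a conjugacy invariant and every loxodromic conjugacy class is infinite. The reduction from conjugacy classes to the orbit $\Gamma x_0$ is therefore not a lower-order correction --- it cannot be, since the orbit count grows like $C e^{\delta T}$ while $N_X(T)\sim e^{\delta T}/(\delta T)$, and the factor $\delta T$ separating them is precisely the content of the theorem. Second, your explanation of where $1/(\delta T)$ comes from is wrong: non-primitive closed geodesics (iterates) contribute only $O(T e^{\delta T/2})$, which is negligible, so counting iterates cannot produce the main term. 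What is actually needed, and what your sketch omits, is the equidistribution of periodic orbits with respect to the BMS measure: one uses mixing together with a closing lemma (converting near-returns of flow boxes into genuine periodic orbits, and checking that distinct near-returns yield distinct conjugacy classes) to show that $\delta e^{-\delta T}\sum_{\ell(\gamma)\le T}\mathcal L_{\gamma}$ converges to the normalized BMS measure, where $\mathcal L_\gamma$ is arclength on the closed orbit. Taking total masses gives $\sum_{\ell(\gamma)\le T}\ell(\gamma)\sim e^{\delta T}/\delta$, and only then does Abel summation produce $N_X(T)\sim e^{\delta T}/(\delta T)$. The closing-lemma step is a real piece of work (it is where the cusps must be handled again, since near-returns deep in a cusp need separate control), not a routine consequence of the orbit-counting asymptotic.
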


Sullivan \cite{sullivan-density} used his Shadow Lemma to investigate the Hausdorff dimension of limit sets of geometrically finite groups.

\begin{theorem}[Sullivan \cite{sullivan-hd,sullivan-density}] If $\Gamma$ is a geometrically finite subgroup of $\mathsf{SO}_0(d,1)$ (which is
 not virtually abelian), then $\delta(\Gamma)$ is the Hausdorff dimension of $\Lambda(\Gamma)$. Moreover, if $\Gamma$ is convex cocompact, then the
Patterson--Sullivan measure is (a scalar multiple of) $\delta(\Gamma)$-dimensional Hausdorff measure. 
\end{theorem}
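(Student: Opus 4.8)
The plan is to prove the two inequalities $\dim_H\Lambda(\Gamma)\le\delta(\Gamma)$ and $\dim_H\Lambda(\Gamma)\ge\delta(\Gamma)$ separately: the first from the convergence of the Poincar\'e series, the second from Sullivan's Shadow Lemma combined with the mass distribution (Frostman) principle; the refinement for convex cocompact groups then comes from upgrading this to a two-sided estimate for the Patterson--Sullivan measure of boundary balls together with uniqueness of the Patterson--Sullivan measure. For the upper bound I would first reduce to the conical limit set: since $\Gamma$ is geometrically finite, $\Lambda(\Gamma)$ is the union of $\Lambda^{con}(\Gamma)$ with the at most countably many parabolic fixed points, and a countable set has Hausdorff dimension $0$; moreover $\Lambda^{con}(\Gamma)=\bigcup_{r\in\mathbb N}\Lambda^{con}_r(\Gamma)$, so it suffices to show $\dim_H\Lambda^{con}_r(\Gamma)\le\delta(\Gamma)$ for each large $r$. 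Arguing as in Corollary~\ref{shadow cor}(2), fix an enumeration $\Gamma=\{\gamma_1,\gamma_2,\dots\}$; every $z\in\Lambda^{con}_r(\Gamma)$ lies in $\mathcal O_r(i,\gamma_n(i))$ for infinitely many $n$, so for every $N$ the shadows $\{\mathcal O_r(i,\gamma_n(i))\}_{n\ge N}$ cover $\Lambda^{con}_r(\Gamma)$, and their diameters in $S^{d-1}$ are at most $C(r)e^{-d(i,\gamma_n(i))}$, which tends to $0$ as $N\to\infty$. For $s>\delta(\Gamma)$,
$$\sum_{n\ge N}\big(\operatorname{diam}\mathcal O_r(i,\gamma_n(i))\big)^s\le C(r)^s\sum_{n\ge N}e^{-sd(i,\gamma_n(i))},$$
and the right-hand side tends to $0$ as $N\to\infty$ because $P_\Gamma(s)<\infty$. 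Hence $\mathcal H^s(\Lambda^{con}_r(\Gamma))=0$ for all $s>\delta(\Gamma)$, so $\dim_H\Lambda^{con}_r(\Gamma)\le\delta(\Gamma)$ and the upper bound follows.

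For the lower bound I would use a Patterson--Sullivan measure $\mu$ for $\Gamma$ of dimension $\delta:=\delta(\Gamma)$ (Theorem~\ref{psexists}); recall that for geometrically finite $\Gamma$ one has $\mu(\Lambda^{con}(\Gamma))=1$. By the mass distribution principle it suffices to exhibit a Borel set $E\subseteq\Lambda(\Gamma)$ with $\mu(E)>0$ and a constant $C$ such that $\mu(B(z,\rho))\le C\rho^{\delta}$ for all $z\in E$ and all small $\rho>0$ (here $B(z,\rho)$ denotes the ball of radius $\rho$ about $z$ in $S^{d-1}$): this yields $\mathcal H^{\delta}(E)\ge\mu(E)/C>0$, hence $\dim_H\Lambda(\Gamma)\ge\delta$. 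The mechanism for the ball estimate is to sandwich $B(z,\rho)$ between shadows: there is a universal $r_0$ with $B(z,\rho)\subseteq\mathcal O_{r_0}(i,y_z(\rho))$, where $y_z(\rho)$ is the point of the ray $\overline{iz}$ at distance $\log(1/\rho)$ from $i$; and whenever the orbit $\Gamma(i)$ comes within a bounded distance of $y_z(\rho)$ one replaces $y_z(\rho)$ by such an orbit point $\gamma(i)$, so that $B(z,\rho)\subseteq\mathcal O_{r'}(i,\gamma(i))$ with $r'$ controlled and $d(i,\gamma(i))\ge\log(1/\rho)-O(1)$, whence Sullivan's Shadow Lemma gives $\mu(B(z,\rho))\le C\,e^{-\delta d(i,\gamma(i))}\le C'\rho^{\delta}$. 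For convex cocompact $\Gamma$ this works with $E=\Lambda(\Gamma)$ (see below); for general geometrically finite $\Gamma$ one takes $E=\Lambda^{con}_r(\Gamma)$ with $r$ large (so $\mu(E)>0$) and supplements the argument with an analysis of the cusps, as discussed at the end.

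Now suppose $\Gamma$ is convex cocompact, so it acts cocompactly on the convex hull of $\Lambda(\Gamma)$ in $\mathbb H^d$. Then there is a single $r$ such that for every $z\in\Lambda(\Gamma)$ and every $t\ge0$ there exists $\gamma\in\Gamma$ with $d(i,\gamma(i))\in[t,t+r]$ and $\gamma(i)$ within $r$ of $\overline{iz}$; this removes any gap in the previous paragraph, and together with the Shadow Lemma gives the two-sided estimate $\tfrac1C\rho^{\delta}\le\mu(B(z,\rho))\le C\rho^{\delta}$ uniformly for $z\in\Lambda(\Gamma)$ and small $\rho$. The upper estimate gives $\mathcal H^{\delta}(\Lambda(\Gamma))>0$ (mass distribution principle), while the lower estimate, via a bounded-multiplicity ($5r$-covering) argument, gives $\mathcal H^{\delta}(\Lambda(\Gamma))<\infty$; so $0<\mathcal H^{\delta}(\Lambda(\Gamma))<\infty$, which together with the two inequalities above reproves $\dim_H\Lambda(\Gamma)=\delta$ in this case. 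Finally, since M\"obius transformations of $S^{d-1}$ are conformal, the $\delta$-dimensional Hausdorff measure transforms by
$$\frac{d\gamma_*\mathcal H^{\delta}}{d\mathcal H^{\delta}}(z)=\big|(\gamma^{-1})'(z)\big|^{\delta}=e^{-\delta B(\gamma^{-1},z)},$$
which is precisely the Patterson--Sullivan quasi-invariance of dimension $\delta$; hence the probability measure $\mathcal H^{\delta}|_{\Lambda(\Gamma)}/\mathcal H^{\delta}(\Lambda(\Gamma))$ is a Patterson--Sullivan measure for $\Gamma$ of dimension $\delta$. Because $\Gamma$ is convex cocompact we have $P_\Gamma(\delta)=+\infty$, so Theorem~\ref{HST} gives uniqueness of the Patterson--Sullivan measure, and therefore $\mu$ is a scalar multiple of $\mathcal H^{\delta}|_{\Lambda(\Gamma)}$, as claimed.

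The step I expect to be the main obstacle is the ball estimate $\mu(B(z,\rho))\le C\rho^{\delta}$ in the general geometrically finite case. For convex cocompact groups uniform conicality makes it immediate, but in the presence of cusps a geodesic ray to a conical limit point can make arbitrarily deep excursions into cusp neighbourhoods, where there need not be an orbit point near the ray at the relevant scale, and near a parabolic fixed point of rank $k$ the Patterson--Sullivan measure of a ball of radius $\rho$ behaves like $\rho^{2\delta-k}$ rather than $\rho^{\delta}$; controlling the measure along such rays requires the global measure formula of Stratmann and Velani and is the genuine technical core of the geometrically finite case. (Alternatively, the inequality $\dim_H\Lambda(\Gamma)\ge\delta(\Gamma)$ can be deduced from the theorem of Bishop and Jones that $\dim_H\Lambda^{con}(\Gamma)=\delta(\Gamma)$ for every non-elementary discrete subgroup, but that argument goes beyond the Shadow Lemma.)
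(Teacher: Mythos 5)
The paper states this result of Sullivan without proof, so there is no ``paper's argument'' to compare against; judged on its own terms, your outline is the standard route and most of it is sound. The upper bound $\dim_H\Lambda(\Gamma)\le\delta(\Gamma)$ via covering $\Lambda^{con}_r(\Gamma)$ by tails of the shadow family and summing $(\operatorname{diam}\mathcal O_r)^s\lesssim e^{-sd(i,\gamma_n(i))}$ is complete and correct. The convex cocompact case is also handled correctly and in full: uniform conicality gives the two-sided estimate $\mu(B(z,\rho))\asymp\rho^{\delta}$ on all of $\Lambda(\Gamma)$, hence $0<\mathcal H^{\delta}(\Lambda(\Gamma))<\infty$, and your closing step --- observing that conformality of M\"obius maps makes the normalized restriction of $\mathcal H^{\delta}$ to $\Lambda(\Gamma)$ itself a Patterson--Sullivan measure, then invoking divergence of $P_\Gamma$ at $\delta$ and the uniqueness clause of the Hopf--Tsuji--Sullivan dichotomy --- is a clean way to get the ``scalar multiple of Hausdorff measure'' statement.

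The genuine gap is exactly where you place it: the lower bound $\dim_H\Lambda(\Gamma)\ge\delta(\Gamma)$ for geometrically finite groups with cusps. Your mechanism (replace $y_z(\rho)$ by a nearby orbit point and apply the Shadow Lemma) requires an orbit point within bounded distance of the ray at the scale $\log(1/\rho)$, and this fails during cusp excursions. Worse, the fallback of restricting to $E=\Lambda^{con}_r(\Gamma)$ does not rescue the mass distribution principle: when some cusp has rank $k$ with $k/2<\delta<k$, the measure of balls at excursion scales behaves like $\rho^{2\delta-k}\gg\rho^{\delta}$, and since $\mu$-almost every geodesic makes arbitrarily deep excursions into every cusp, one has $\limsup_{\rho\to0}\mu(B(z,\rho))/\rho^{\delta}=+\infty$ for $\mu$-a.e.\ $z$ --- so no full-measure set $E$ satisfies the required pointwise upper ball estimate. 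Closing this requires either the Stratmann--Velani global measure formula (or Sullivan's original cusp analysis), or the Bishop--Jones construction of a large-dimensional Cantor subset of uniformly radial points; you name both but carry out neither, so as written the theorem is proved only in the convex cocompact case and the upper-bound half of the general case.
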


In general, $\delta(\Gamma)$ need not be the Hausdorff dimension of the limit set (see Patterson \cite{patterson-examples}), 
but Bishop and Jones showed that it is  always the Hausdorff dimension of the conical limit set.

\begin{theorem}[Bishop--Jones \cite{bishop-jones}] If $\Gamma\subset\mathsf{SO}_0(d-1,1)$ is discrete, then $\delta(\Gamma)$ is
the Hausdorff dimension of the conical limit  set $\Lambda^{con}(\Gamma)$.
\end{theorem}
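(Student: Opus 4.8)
The plan is to establish the two inequalities $\dim_H \Lambda^{con}(\Gamma) \le \delta(\Gamma)$ and $\dim_H \Lambda^{con}(\Gamma) \ge \delta(\Gamma)$ separately. The upper bound is the softer half and follows from a covering argument using shadows. The key observation is that for each $r>0$ the set $\Lambda^{con}_r(\Gamma)$ of $r$-conical limit points (in the notation introduced for Corollary \ref{shadow cor}) is contained in $\bigcup_{n\ge N}\mathcal O_r(x_0,\gamma_n(x_0))$ for every $N$, where $\{\gamma_n\}$ enumerates $\Gamma$; moreover each shadow $\mathcal O_r(x_0,\gamma_n(x_0))$ is (comparable to) a ball in $\partial\mathbb H^d$ of radius roughly $e^{-d(x_0,\gamma_n(x_0))}$. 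Hence for any $s>\delta(\Gamma)$ the Poincar\'e series $\sum_\gamma e^{-s\, d(x_0,\gamma(x_0))}$ converges, so the tails go to zero; this shows the $s$-dimensional Hausdorff measure of $\Lambda^{con}_r(\Gamma)$ vanishes for every $s>\delta(\Gamma)$, giving $\dim_H\Lambda^{con}_r(\Gamma)\le\delta(\Gamma)$, and then $\dim_H\Lambda^{con}(\Gamma)\le\delta(\Gamma)$ since $\Lambda^{con}(\Gamma)=\bigcup_r\Lambda^{con}_r(\Gamma)$ is a countable increasing union.

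For the lower bound, I would construct, for each $s<\delta(\Gamma)$, a probability measure $\nu$ supported on (a subset of) $\Lambda^{con}(\Gamma)$ together with a uniform ``Frostman'' upper bound $\nu(B(z,\rho))\le C\rho^{s}$ for all balls; the mass distribution principle then yields $\dim_H\Lambda^{con}(\Gamma)\ge s$, and letting $s\uparrow\delta(\Gamma)$ finishes. The natural source for $\nu$ is a Patterson--Sullivan-type measure. Since $s<\delta(\Gamma)$ the series $\sum_\gamma e^{-s\,d(x_0,\gamma(x_0))}$ diverges, so the construction in Theorem \ref{psexists} (no Patterson correction function needed) produces a measure $\mu_s$ on $\Lambda(\Gamma)$ satisfying the quasi-invariance $\frac{d\alpha_*\mu_s}{d\mu_s}(z)=e^{-s B(\alpha^{-1},z)}$. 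Sullivan's Shadow Lemma, applied with the dimension $s$ in place of $\delta(\Gamma)$, gives $\mu_s(\mathcal O_r(x_0,\gamma(x_0)))\asymp e^{-s\,d(x_0,\gamma(x_0))}$ for $r\ge R_0$; since shadows of ball-radius $\approx e^{-d(x_0,\gamma(x_0))}$ form a Vitali-type basis for the topology of $\partial\mathbb H^d\cong S^{d-1}$, any metric ball $B(z,\rho)$ with $z$ in the support is comparable to such a shadow with $e^{-d(x_0,\gamma(x_0))}\asymp\rho$, giving $\mu_s(B(z,\rho))\lesssim \rho^{s}$. Finally, one checks $\mu_s$ charges the conical limit set: either argue directly, or use a diagonal/Borel--Cantelli argument showing $\mu_s$-a.e. point is $r$-conical for suitable $r$, relying on divergence of the series at $s$ to rule out escape to parabolic or non-conical directions.

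There is a mild technical wrinkle in the lower bound: one must check that the ``shadow-to-ball'' comparison is genuinely two-sided and uniform, i.e. that shadows of fixed ``width'' $r$ really do comprise a doubling family on which the Frostman estimate transfers to honest Euclidean balls on $S^{d-1}$; this is standard in constant curvature but deserves a line. The main obstacle, and the place where the conical hypothesis is essential, is establishing that $\mu_s(\Lambda^{con}(\Gamma))>0$ (ideally $=1$): in general $\delta(\Gamma)$ need not equal $\dim_H\Lambda(\Gamma)$ precisely because a Patterson--Sullivan measure can concentrate on parabolic points, and the whole point of Bishop--Jones is that no such pathology survives when we restrict to conical points. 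The cleanest route is a conditional Borel--Cantelli / shadow-covering argument: show that for $\mu_s$-a.e.\ $z$ and suitable $r$, $z$ lies in infinitely many shadows $\mathcal O_r(x_0,\gamma_n(x_0))$ with the orbit points staying boundedly close to the geodesic ray $\overline{x_0 z}$, which is exactly $r$-conicality; the divergence of $\sum_\gamma e^{-s\,d(x_0,\gamma(x_0))}$ at the exponent $s$ under which $\mu_s$ transforms is what powers the ``infinitely often'' conclusion.
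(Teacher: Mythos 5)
The paper quotes Bishop--Jones without proof, so your argument has to stand on its own. Your upper bound is fine: it is the standard covering argument (essentially the same computation as in the proof of Corollary \ref{shadow cor}(2)), using that each $r$-conical point lies in infinitely many shadows of diameter $\asymp e^{-d(x_0,\gamma(x_0))}$ and that the Poincar\'e series converges for $s>\delta(\Gamma)$.

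The lower bound, however, has a genuine gap, and it sits exactly where the theorem is hard. First, the Patterson construction does not produce a conformal measure of dimension $s$ for $s<\delta(\Gamma)$: the measures $\mu_t$ are only defined for $t>\delta(\Gamma)$ (otherwise the normalizing series is infinite), and their weak limit transforms with exponent $\delta(\Gamma)$, not $s$. In fact no $\Gamma$-conformal measure of dimension $s<\delta(\Gamma)$ can exist for a non-elementary group: applying the Shadow Lemma to such a measure and summing over orbit points in annuli $d(x_0,\gamma(x_0))\in[n,n+1]$ (whose shadows have bounded multiplicity) gives $\#\Gamma(R)\lesssim e^{sR}$, forcing $s\ge\delta(\Gamma)$. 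So the object your Frostman argument is built on does not exist. Second, even if you retreat to the honest Patterson--Sullivan measure at dimension $\delta(\Gamma)$, your final step fails precisely in the interesting case: when $P_\Gamma(\delta(\Gamma))<+\infty$ (which does occur for discrete subgroups of $\mathsf{SO}_0(3,1)$, as the paper notes), Corollary \ref{shadow cor}(2) gives $\mu(\Lambda^{con}(\Gamma))=0$, so no Borel--Cantelli argument can make $\mu$ charge the conical limit set; your appeal to ``divergence of the series at the exponent under which $\mu_s$ transforms'' conflates the auxiliary exponent $s$ with the measure's actual dimension $\delta(\Gamma)$. The actual Bishop--Jones proof avoids conformal densities entirely: for each $s<\delta(\Gamma)$ one builds a Cantor-type ``tree'' of nested shadows inside $\Lambda^{con}(\Gamma)$ --- using a uniform orbit-growth estimate to guarantee, at each stage, enough disjoint sub-shadows that the sum of their diameters to the power $s$ does not decrease --- and equips the resulting limit set with the natural tree measure, to which the mass distribution principle applies. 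This is the construction the survey alludes to before Theorem \ref{112}, and it is the missing ingredient in your proposal.
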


\medskip\noindent
{\bf Remark:} 
If  $\Gamma$ is a finitely generated, discrete subgroup  of $ \mathsf{SO}_0(3,1)$, then $\delta(\Gamma)$ is the Hausdorff dimension of
its limit set.  Moreover,  either $\Lambda(\Gamma)$
has Lebesgue measure zero or $\Lambda(\Gamma)=\partial\mathbb H^3$ and $\Gamma$ acts ergodically on $\Lambda(\Gamma)$.
(This result was known as the Ahlfors Measure Conjecture.) See \cite{canary-survey} for a more complete discussion of the 3-dimensional case with full references.

\section{Divergent and transverse groups}

While the symmetric space
$X_d=\mathsf{SL}(d,\mathbb R)/\mathsf{SO}(d)$ associated to $\mathsf{SL}(d,\mathbb R)$ has non-positive curvature, it is not negatively curved because it  contains totally geodesic embedded copies of the Euclidean $(d-1)$-plane. As a result, the discrete subgroups of $\mathsf{SL}(d,\mathbb R)$ can exhibit much wilder behavior as compared to the discrete subgroups of $\PSL(2,\Rb)$ (or even the discrete subgroups of rank $1$ Lie groups). Thus, in order to find a generalization of the Patterson--Sullivan theory described in the previous section to the higher rank setting, we restrict ourselves to a class of discrete subgroups of $\SL(d,\Rb)$, called transverse subgroups. We will now discuss the theory of these subgroups, with a focus on how they act on their limit sets. 

We first recall some standard notions from the Lie theory of $\SL(d,\Rb)$. Let $\mathfrak{a}$ denote the vector space of $d\times d$, real valued, trace-free diagonal matrices and let $\mathfrak{a}^+$ denote the set of matrices in $\mathfrak{a}$ whose diagonal entries are weakly decreasing down the diagonal. We refer to $\mathfrak{a}$ as the \emph{(standard) Cartan subspace} of $\mathfrak{sl}(d,\mathbb R)$ and $\mathfrak{a}^+$ as the \emph{(standard) positive Weyl chamber}. It will be convenient to  identify $\mathfrak{a}$ and
$\mathfrak{a}^+$ with subsets of $\mathbb R^d$ by simply recording the diagonal. For all $k\in\{1,\dots,d-1\}$, the \emph{$k$-th simple root} of $\mathsf{SL}(d,\mathbb R)$ is the element $\alpha_k\in\mathfrak{a}^*$ given by
$$
\alpha_k(\vec a)=a_k-a_{k+1}\quad\text{for all}\quad \vec a=(a_1,\dots,a_d)\in\mathfrak a.
$$
Notice that $\vec a\in\mathfrak{a}^+$ if and only if $\alpha_k(\vec a)\ge 0$ for all $k$, and that $\{\alpha_1,\ldots,\alpha_{d-1}\}$ is a basis of $\mathfrak{a}^*$. 

The \emph{Cartan projection} $\kappa:\mathsf{SL}(d,\mathbb R)\to \mathfrak{a}^+$ is the map defined by
$$\kappa(A)=\big(\log\sigma_1(A),\ldots,\log\sigma_d(A)\big)$$
where $\sigma_1(A)\ge\dots\ge\sigma_d(A)$ are singular values of $A$. The vector $\kappa(A)$ can be interpreted as a $\mathfrak a^+$-valued version of how far $A\in\SL(d,\Rb)$ translates the identity coset in $X_d$, which we regard as a basepoint $x_0\in X_d$. Indeed, there is some constant $C>0$ such that for all $A\in\SL(d,\Rb)$, $d_{X_d}(x_0,A(x_0))=C||\kappa(A)||$. 

The Cartan decomposition theorem says that for all $A\in\mathsf{SL}(d,\mathbb R)$ there exists $L,M\in \mathsf{SO}(d)$ then
$A=Le^{\kappa(A)}M$. Furthermore, even though $L$ (and $M$) is not unique to $A$, if 
\[\alpha_k(\kappa(A))=\log\left(\frac{\sigma_k(A)}{\sigma_{k+1}(A)}\right)>0,\] 
then
\[U_k(A):=L\Big({\rm Span}_{\Rb}(e_1,\dots,e_k)\Big)\]
is well-defined, i.e. does not depend on the choice of $L$. Geometrically, $\sigma_k(A)$ is the length of the $k$-th largest semi-axis of the ellipsoid $A(\mathbb S^{d-1})$, and $U_k(A)$ is the span of its $k$ largest axes.

Let $\theta$ be a non-empty subset of $\{1,\ldots,d-1\}$ that is \emph{symmetric}, i.e. $k\in\theta$ if and only if $d-k\in\theta$.
We say that a subgroup $\Gamma\subset\mathsf{SL}(d,\mathbb R)$ is \emph{$\Psf_\theta$-divergent} if $\alpha_k(\kappa(\gamma_n))\to\infty$ for every infinite sequence $\{\gamma_n\}$ in $\Gamma$ and all $k\in\theta$. Notice that $\Psf_\theta$-divergent groups are discrete. An important property of $\Psf_\theta$-divergent groups is that they admit a natural notion of a limit set, which we will now construct.

Let $\mathcal F_\theta$ denote the $\theta$-flag manifold, i.e if $\theta=\{k_1<\cdots<k_m\}$, then $\mathcal F_\theta$ is the set of nested tuples of subspaces 
$F=(F^{k_1}, F^{k_2},\dots, F^{k_m})$ where $F^{k_i}\subset\Rb^d$ is a vector subspace of dimension $k_i$. For any $A\in\mathsf{SL}(d,\mathbb R)$ such that $\alpha_k(\kappa(A))>0$ for all $k\in\theta$, denote
\[U_\theta(A):=(U_{k_1}(A),\dots,U_{k_m}(A))\in\Fc_\theta.\]
If $\Gamma\subset\SL(d,\Rb)$ is $\Psf_\theta$-divergent, then $U_\theta(\gamma)$ is defined for all but finitely many $\gamma\in\Gamma$. We then define
the \emph{$\theta$-limit set of $\Gamma$}, denoted $\Lambda_\theta(\Gamma)$, to be the set of accumulation points of $\{U_\theta(\gamma):\gamma\in\Gamma\}$ in $\Fc_\theta$, i.e. $F\in\Lambda_\theta(\Gamma)$ if and only if there 
exists a sequence of distinct elements $\{\gamma_n\}$ in $\Gamma$ so that $U_\theta(\gamma_n)\to F$. One can verify that $\Lambda_\theta(\Gamma)$ is a closed, $\Gamma$-invariant subset of $\Fc_\theta$. We say that $\Gamma$ is \emph{non-elementary} if $\Lambda_\theta(\Gamma)$ is infinite. 

Since $\theta$ is symmetric, we may say that two flags $F,G\in\Fc_\theta$ are \emph{transverse} if $F^k+G^{d-k}=\Rb^d$ for all $k\in\theta$. We then say that a $\Psf_\theta$-divergent group is \emph{$\Psf_\theta$-transverse} if its $\theta$-limit set consists of mutually transverse flags. 

Recall that the action, by homeomorphisms, of a  group $\Gamma_0$ on a 
compact metrizable space $X$ is said to be a (discrete)  {\em convergence group action} if for every sequence of distinct elements $\{\gamma_n\}$ in $\Gamma_0$,  there are points $x,y\in X$ and a subsequence, still denoted $\{\gamma_n\}$, so that $\gamma_n(z)$ converges to  $x$ for all $z\in X\setminus\{y\}$ (uniformly on compact subsets of $X\setminus\{y\}$). It is a consequence of the Cartan decomposition theorem (see \cite[Proposition 2.3]{CZZ3}) that if $\Gamma$ is $\Psf_\theta$-divergent and $\{\gamma_n\}$ is a sequence of distinct elements in $\Gamma$ such that $U_\theta(\gamma_n)\to F^+$ and $U_\theta(\gamma_n^{-1})\to F^-$ for some $F^\pm\in\Lambda_\theta(\Gamma)$, then $\gamma_n(F)\to F^+$ for all $F\in\Fc_\theta$ transverse to $F^-$. 
This allows us to deduce the following crucial feature of $\Psf_\theta$-transverse groups. 

\begin{proposition}[{\cite[Section 5.1]{KLP}, \cite[Proposition 3.3]{CZZ2}}]\label{prop: convergence group}
If $\Gamma$ is $\Psf_\theta$-transverse, then $\Gamma$ acts on $\Lambda_\theta(\Gamma)$ as a convergence group. 
In particular, if $\Gamma$ is non-elementary, then $\Gamma$ acts on $\Lambda_\theta(\Gamma)$ minimally, and $\Lambda_\theta(\Gamma)$ is perfect. 
\end{proposition}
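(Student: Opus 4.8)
The idea is to read off the convergence property from the consequence of the Cartan decomposition stated just above the proposition, and then to obtain minimality and perfectness from the general dynamics of convergence groups.

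\textbf{Convergence group property.} First observe that $\Lambda_\theta(\Gamma)$ is compact and metrizable, being a closed subset of the compact manifold $\Fc_\theta$, and that $\Gamma$ acts on it by homeomorphisms, since it is $\Gamma$-invariant and $\Gamma$ acts on $\Fc_\theta$ by diffeomorphisms. Given a sequence of distinct elements $\{\gamma_n\}$ of $\Gamma$, I would pass, using compactness of $\Fc_\theta$, to a subsequence along which $U_\theta(\gamma_n)\to F^+$ and $U_\theta(\gamma_n^{-1})\to F^-$; since the $\gamma_n$ and the $\gamma_n^{-1}$ are distinct, the definition of the $\theta$-limit set gives $F^+,F^-\in\Lambda_\theta(\Gamma)$. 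The cited consequence of the Cartan decomposition then yields $\gamma_n(F)\to F^+$ for every flag $F$ transverse to $F^-$, uniformly for $F$ in compact subsets of the (open, dense) set of such flags. As $\Gamma$ is $\Psf_\theta$-transverse, every flag of $\Lambda_\theta(\Gamma)$ other than $F^-$ is transverse to $F^-$, so with $x=F^+$ and $y=F^-$ we obtain $\gamma_n(z)\to x$ uniformly on compact subsets of $\Lambda_\theta(\Gamma)\setminus\{y\}$, which is exactly the convergence group property.

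\textbf{Minimality and perfectness.} Assume $\Gamma$ is non-elementary, so that $L:=\Lambda_\theta(\Gamma)$ is infinite; I claim $\overline{\Gamma F}=L$ for every $F\in L$. Fix $G\in L$ and pick distinct $\gamma_n\in\Gamma$ with $U_\theta(\gamma_n)\to G$ and, after passing to a subsequence, $U_\theta(\gamma_n^{-1})\to H\in L$. If $F\ne H$, then $F$ is transverse to $H$, so $\gamma_n(F)\to G$ by the Cartan fact, hence $G\in\overline{\Gamma F}$. If $F=H$, I would instead choose $g\in\Gamma$ with $gF\ne F$; then $gF\in\Gamma F\subset L$ is distinct from $H=F$, hence transverse to it, so $\gamma_n(gF)\to G$ and again $G\in\overline{\Gamma F}$. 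This gives minimality, and perfectness then follows formally: the set of isolated points of $L$ is open and $\Gamma$-invariant, so by minimality it is empty or all of $L$, and the latter would make the compact set $L$ finite, contradicting non-elementarity.

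\textbf{The main obstacle.} The real content is the fact invoked in the case $F=H$: a non-elementary $\Psf_\theta$-transverse group fixes no point of $\Lambda_\theta(\Gamma)$. This cannot be omitted --- if $\Gamma$ fixed $F$ then $\overline{\Gamma F}=\{F\}\subsetneq L$. I would deduce it from the classical dynamics of convergence groups: since every $F\in L$ is the attracting flag $F^+$ of some sequence (apply the first paragraph to a sequence with $U_\theta(\gamma_n)\to F$), the set $L$ is exactly the limit set of the convergence action, and a convergence group whose limit set has at least three points contains two independent loxodromic elements --- elements with distinct attracting and repelling fixed points, arranged so that the four fixed points are distinct --- which by a ping-pong argument generate a free subgroup and in particular leave no finite set invariant. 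Equivalently, having identified $L$ with the limit set of the convergence action, one may simply cite the classical theorem (Gehring--Martin, Tukia, Bowditch) that a non-elementary convergence group acts minimally on its limit set, which is perfect.
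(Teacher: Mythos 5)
Your proof is correct and follows exactly the route the paper indicates: the convergence property is read off from the stated consequence of the Cartan decomposition together with mutual transversality of the limit set, and minimality and perfectness then follow from classical convergence group theory. Your extra care with the potential global fixed point (the case $F=H$) is exactly the point that the citation to the standard theory of non-elementary convergence groups is meant to handle, so nothing is missing.
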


For subgroups of $\SL(d,\Rb)$ that lie in $\mathsf{SO}_0(d-1,1)$, being discrete is equivalent to being $P_{1,d-1}$-divergent, which is in turn equivalent to being $\Psf_{1,d-1}$-transverse. The first equivalence is a consequence of the observation that if $A\in\mathsf{SO}_0(d-1,1)$, then
$$d_{\mathbb H^{d-1}}(x_0,A(x_0))=\alpha_1(\kappa(A))=\alpha_{d-1}(\kappa(A))$$ 
where $x_0=[\mathsf{SO}(d-1)]\in\mathbb H^{d-1}=\mathsf{SO}_0(d-1,1)/\mathsf{SO}(d-1)$. (Notice that if $k\ne 1,d-1$, then $\alpha_k(A)=0$, so $\Gamma$ is not $\Psf_{k,d-k}$-divergent.) For the second equivalence, note that the inclusion of $\mathsf{SO}_0(d-1,1)\subset\SL_d(\Rb)$ induces an identification between $\partial\Hb^{d-1}$ and  the projectivization of the light cone in $\Rb^d$. Furthermore, if $L$ is a line in the light cone, then its perpendicular $L^\perp$ is the hyperplane that is tangent to the light cone and contains $L$. Thus, if $\Gamma\subset\mathsf{SO}_0(d-1,1)$ is discrete and $\Lambda(\Gamma)\subset\partial\mathbb H^{d-1}$ denotes its traditional limit set, then
$$\Lambda_{1,d-1}(\Gamma)=\{ (L,L^\perp)\in\Fc_{1,d-1}:L\in\Lambda(\Gamma)\},$$
which is clearly a set of mutually transverse partial flags. More generally, for rank $1$ semisimple Lie groups, the notions of discrete subgroups, divergent subgroups, and transverse subgroups agree.

In $\SL(d,\Rb)$ (and more generally, in higher rank semisimple Lie groups)  well-studied examples of transverse groups are Anosov and relatively Anosov groups. To describe these, we recall some terminology from the theory of convergence groups actions. Suppose that a group $\Gamma_0$ acts on a copact metrizable space $X$ as a convergence group. We say that a point $x\in X$  is a  
{\em conical limit point} if there exist distinct $a,b\in X$ and a sequence
$\{\gamma_n\}$ in $\Gamma_0$ so that $\gamma_n(x)$ converges to $a$ and $\gamma_n(y)$ converges to $b$ for all $y\in X\setminus\{x\}$. Also, we say that a point $x\in X$ is a \emph{bounded parabolic fixed point} if its stabilizer in $\Gamma$ acts properly and cocompactly on $X\setminus\{x\}$. These definitions are equivalent to the definitions already given in the case of limit sets of discrete subgroups of $\mathsf{SO}_0(d-1,1)$. We then say that a $\Psf_\theta$-transverse subgroup $\Gamma$ is \emph{$\Psf_\theta$-Anosov} if every point in $\Lambda_\theta(\Gamma)$ is a conical limit point of the $\Gamma$-action. Analogously, a $\Psf_\theta$-transverse subgroup $\Gamma$ is \emph{$\Psf_\theta$-relative Anosov} if every point in $\Lambda_\theta(\Gamma)$ is either a conical limit point or a  bounded parabolic fixed point. 

Notice from these definitions that $\Psf_\theta$-Anosov and $\Psf_\theta$-relative Anosov subgroups are respectively higher rank generalizations of convex cocompact and  geometrically finite subgroups of $\mathsf{SO}_0(d-1,1)$. Also, it follows from Bowditch \cite{Bowditch99} and Tukia \cite{tukia-conical} that every $\Psf_\theta$-Anosov subgroup is a hyperbolic group whose Gromov boundary is its $\theta$-limit set. Similarly, by Yaman \cite{yaman}, every $\Psf_\theta$-relative Anosov subgroup is a relative hyperbolic group (the peripheral groups are the stabilizers of the parabolic fixed points) whose Bowditch boundary is its $\theta$-limit set. %Equivalently, a subgroup $

The notion of Anosov subgroups of hyperbolic groups was introduced by Labourie \cite{labourie-invent}, and was further developed by
Guichard--Wienhard \cite{guichard-wienhard}, Kapovich--Leeb--Porti \cite{KLP}, Gu\'eritaud--Guichard--Kassel--Wienhard \cite{GGKW},
Bochi--Potrie--Sambarino \cite{BPS}
and others. This was later generalized to the notion of relatively Anosov subgroups by Kapovich--Leeb \cite{KL}, which was further developed by Zhu \cite{zhu-reldom}, Zhu--Zimmer \cite{ZZ1,ZZ2} and others. Both of these classes of representations are central in several areas of research, most notably in higher Teichm\"uller theory and convex real projective geometry.

\section{Patterson--Sullivan measures for divergent groups}

We will now introduce Patterson--Sullivan theory in the setting of divergent groups. In the $\mathsf{SO}_0(d-1,1)$ setting, three ingredients that were crucial to the Patterson-Sullivan theory are the translation distance of the base point $d(x_0,\bullet(x_0)):\mathsf{SO}_0(d-1,1)\to\Rb$, the Busemann cocycle $B:\mathsf{SO}_0(d-1,1)\times\partial\Hb^d\to\Rb$, and the length function $\ell:\mathsf{SO}_0(d-1,1)\to\Rb$. We will now define analogs of all three of these for $\SL(d,\Rb)$.

Let $\theta\subset\{1,\dots,d-1\}$ be non-empty and symmetric, and denote
\[\mathfrak a_\theta:=\{\vec a\in\mathfrak{a}:\alpha_k(\vec a)=0\ \text{if}\ k\notin\theta\}.\]
For any $k\in\{1,\dots,d-1\}$, the \emph{$k$-th fundamental weight} of $\mathsf{SL}(d,\mathbb R)$ is the element $\omega_k\in\mathfrak{a}^*$
given by
$$\omega_k(\vec a)=a_1+\cdots+a_k\quad\text{for all}\quad\vec a=(a_1,\dots,a_d)\in\mathfrak a.$$
Notice that $\{\omega_k|_{\mathfrak a_\theta}:k\in\theta\}$ is a basis for $\mathfrak{a}_\theta^*$, so there is a projection 
\[p_\theta:\mathfrak a\to\mathfrak a_\theta\] 
defined by $\omega_k(\vec a)=\omega_k(p_\theta(\vec a))$ for all $k\in\theta$. Then define $\kappa_\theta:=p_\theta\circ\kappa$. 

Now, suppose that $\Gamma\subset\SL(d,\Rb)$ is a $\Psf_\theta$-divergent subgroup. The \emph{$\theta$-Benoist limit cone of $\Gamma$}, denoted $\mathcal B_\theta(\Gamma)$, is the set of accumulation points of the lines $\{\mathbb R^+\kappa_\theta(\gamma):\gamma\in\Gamma\}$. It is known that $\Bc_\theta(\Gamma)$ is a closed pointed convex cone in $\mathfrak a_\theta$. Furthermore, $\phi\in\mathfrak a_\theta^*$ is non-negative on $\mathcal B_\theta(\Gamma)$ if and only if $\phi(\kappa_\theta(\gamma))\ge 0$ for all but finitely many $\gamma\in\Gamma$.
We refer the reader to the excellent book of Benoist and Quint \cite{BQ} for more details on these definitions.

Recall that if $\Gamma\subset\mathsf{SO}_0(d-1,1)$ is discrete and infinite, then $\Gamma$ is $\Psf_{1,d-1}$-transverse. Notice that even though $\mathfrak a_{1,d-1}$ is $2$-dimensional, 
\[\mathfrak a':=\kappa(\mathsf{SO}_0(d-1,1))=\kappa_{1,d-1}(\mathsf{SO}_0(d-1,1))\subset\mathfrak a_{1,d-1}\]
is $1$-dimensional, and $\Bc_{1,d-1}(\Gamma)=\{(k,0,\dots,0,-k):k\in\Rb^+\}$ is a ray in $\mathfrak a'$ (that does not depend on $\Gamma$). In other words, for any non-zero $\phi\in\mathfrak a_{1,d-1}^*$ that is non-negative on $\Bc_{1,d-1}(\Gamma)$, the function $\phi\circ\kappa_{1,d-1}|_{\mathsf{SO}_0(d-1,1)}$ is a positive scalar multiple of $\omega_1\circ\kappa_{1,d-1}|_{\mathsf{SO}_0(d-1,1)}=d(x_0,\bullet(x_0))$. 

By the observation above, for a general $\Psf_\theta$-transverse subgroup $\Gamma\subset\SL(d,\Rb)$ and general $\phi\in\mathfrak a_\theta^*$ that is non-negative on $\Bc_\theta(\Gamma)$, we regard
\[\phi\circ\kappa_\theta:\SL(d,\Rb)\to\Rb\]
as the required analog for the translation distance of the basepoint. Unlike the special case when $\Gamma\subset\mathsf{SO}_0(d-1,1)$, in general, the span of $\kappa_\theta(\Gamma)$ might be more than $1$-dimensional and $\Bc_\theta(\Gamma)$ will depend on $\Gamma$. Thus, the analog of the translation distance of the basepoint will depend on the choice of $\phi$, and the set of possible $\phi$ will depend on $\Gamma$.

To define the analog of the Busemann cocycle, we first define the \emph{partial Iwasawa cocycle}
$$B_\theta:\mathsf{SL}(d,\mathbb R)\times\mathcal F_\theta\to\mathfrak{a}_\theta,$$
to be the map such that for any $k\in\theta$,
$$\omega_k(B_\theta(A,F))=\log\frac{\norm{A(v_1)\wedge \cdots\wedge A(v_k)}_k}{\norm{v_1\wedge \cdots\wedge v_k}_k}$$
where $v_1,\dots,v_k$ is some (any) basis of $F^k$ and $\norm{\cdot}_k$ is the norm on $\bigwedge^k\Rb^d$ induced by the standard inner product on $\Rb^d$. Then for any $\phi\in\mathfrak a_\theta^*$ that is non-negative of $\Bc_\theta(\Gamma)$, define
\[\phi\circ B_\theta:\mathsf{SL}(d,\mathbb R)\times\mathcal F_\theta\to\Rb.\]
Note that the traditional Busemann cocycle for $\mathsf{SO}_0(d-1,1)$ is $\omega_1\circ B_{1,d-1}$, so we may regard $\phi\circ B_\theta$ as the required analog of the Busemann cocycle.
 
Finally, we define the analog of the length function. To do so, define the \emph{Jordan projection} $\nu:\SL(d,\Rb)\to\mathfrak a^+$ to be the map defined by 
$$\nu(A)=\big(\log\lambda_1(A),\ldots,\log\lambda_d(A)\big)$$
where $\lambda_1(A)\ge\dots\ge\lambda_d(A)$ are the modulus of the (generalized) eigenvalues of $A$. The vector $\nu(A)$ can be regarded as a $\mathfrak a^+$-valued version of the minimum translation distance of $A\in\SL_d(\Rb)$ on $X_d$. Indeed, there is some constant $C>0$ such that for all $A\in\SL(d,\Rb)$, $\inf_{x\in X_d}d_{X_d}(x,A(x))=C||\nu(A)||$. Let $\nu_\theta:=p_\theta\circ\nu$. Then for every $\phi\in\mathfrak a_\theta^*$ that is non-negative on $\Bc_\theta(\Gamma)$, define
\[\phi\circ\nu_\theta:\SL(d,\Rb)\to\Rb.\]
 (The assumption that $\phi$ is non-negative on $\Bc_\theta(\Gamma)$ is equivalent to assuming that $\phi(\nu_\theta(\gamma))\ge 0$ for all $\gamma\in\Gamma$.) Note that the traditional length function for $\mathsf{SO}_0(d-1,1)$ is $\omega_1\circ \nu_{1,d-1}$, so we may regard $\phi\circ\nu_\theta$ as the required analog of the length function.

We now have all the ingredients needed to discuss the Patterson--Sullivan theory for a $\Psf_\theta$-divergent subgroup $\Gamma\subset\SL(d,\Rb)$. For any $\phi\in\mathfrak a_\theta^*$ that is non-negative on $\Bc_\theta(\Gamma)$, the \emph{$\phi$-Poincar\'e series} of $\Gamma$ is
$$P_\Gamma^\phi(s):=\sum_{\gamma\in\Gamma}e^{-s\phi(\kappa_\theta(\gamma))},$$
and the \emph{$\phi$-critical exponent} $\delta^\phi(\Gamma)\subset [0,+\infty]$ of $\Gamma$ is the critical exponent of $P_\Gamma^\phi$. If $\phi$ is strictly positive on $\Bc_\theta(\Gamma)-\{\vec 0\}$, then $\delta^\phi(\Gamma)<+\infty$ (see \cite[Proposition 2.7]{CZZ3}). Moreover, the converse is true if $\Gamma$ is Anosov \cite[Corollary 5.7.2]{sambarino-dichotomy} or relatively Anosov \cite[Theorem 1.3]{CZZ4}. In the case when $\Gamma\subset\mathsf{SO}_0(d-1,1)$, then $\theta=\{1,d-1\}$ and $P^{\omega_1}_\Gamma$ is the traditional Poincar\'e series.

We say that a probability measure $\mu$ on $\Lambda_\theta(\Gamma)$ is a \emph{$\phi$-Patterson--Sullivan measure} for $\Gamma$ (of dimension $\delta^\phi(\Gamma)$) if for all $\gamma\in\Gamma$, $\gamma_*(\mu)$ and $\mu$ are absolutely continuous, and 
$$\frac{d\gamma_*(\mu)}{d\mu}(z)=e^{-\delta^\phi(\Gamma)\phi(B_\theta(\gamma^{-1},z))}$$
for almost every $z\in\Lambda_\theta(\Gamma)$. 

Albuquerque \cite{albuquerque} and Quint \cite{quint-ps} were the first to construct Patterson--Sullivan measures for Zariski dense, discrete subgroups 
of $\mathsf{SL}(d,\mathbb R)$ on the full flag manifold.  We prove the analogous result for divergent subgroups on the partial flag manifold. Also see related work
of Sambarino \cite{sambarino15} and Dey--Kapovich \cite{DK1}.

\begin{theorem}[{\cite[Prop. 3.2]{CZZ3}}]
\label{trans:psexists}
If $\Gamma\subset \mathsf{SL}(d,\mathbb R)$ is $\Psf_\theta$-divergent,
$\phi\in \mathfrak{a}_\theta^*$ and $\delta^\phi(\Gamma)<+\infty$, then $\Lambda_\theta(\Gamma)$ admits a Patterson-Sullivan measure.
\end{theorem}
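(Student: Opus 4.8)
The plan is to transcribe, almost verbatim, the proof of Theorem~\ref{psexists} to the higher-rank setting, with $\phi\circ\kappa_\theta\colon\SL(d,\Rb)\to\Rb$ playing the role of the displacement $d(i,\bullet(i))$, the partial Iwasawa cocycle $\phi\circ B_\theta$ playing the role of the Busemann cocycle, and the map $U_\theta$ playing the role of the orbit map $\gamma\mapsto\gamma(i)$. Writing $\delta:=\delta^\phi(\Gamma)$, I would first form, for each $s>\delta$, the probability measure
$$\mu_s=\frac{1}{P_\Gamma^\phi(s)}\sum_{\gamma}e^{-s\phi(\kappa_\theta(\gamma))}\,\mathcal{D}_{U_\theta(\gamma)}$$
on the compact flag manifold $\Fc_\theta$, where $\mathcal{D}_F$ is the unit mass at $F$ and the sum runs over the cofinitely many $\gamma\in\Gamma$ for which $U_\theta(\gamma)$ is defined; by weak-$*$ compactness I extract a sequence $s_n\downarrow\delta$ with $\mu_{s_n}\to\mu$. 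Note that only $\Psf_\theta$-divergence (not transversality) is used in what follows.

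Next I would check that $\mu$ is supported on $\Lambda_\theta(\Gamma)$. When $P_\Gamma^\phi(\delta)=+\infty$ this is immediate: any $F_0$ outside the closed set $\Lambda_\theta(\Gamma)$ has an open neighbourhood $V$ whose closure meets $\{U_\theta(\gamma):\gamma\in\Gamma\}$ in a finite set, so $\mu_s(V)\le C_{F_0}/P_\Gamma^\phi(s)\to0$ and hence $\mu(V)=0$. When $P_\Gamma^\phi(\delta)<+\infty$ I would instead run the whole construction with a Patterson-modified series $\widehat P_\Gamma^\phi(s)=\sum_\gamma h\big(e^{s\phi(\kappa_\theta(\gamma))}\big)e^{-s\phi(\kappa_\theta(\gamma))}$, where $h$ is a continuous non-decreasing function with the same critical exponent $\delta$, divergence at $\delta$, and the slow-growth property $h(\eta t)\le t^\epsilon h(\eta)$ for $t>1$ and $\eta$ large; the normalising constant then diverges at $\delta$ and the support argument goes through unchanged.

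For the quasi-invariance property I would fix $\alpha\in\Gamma$ and imitate the re-indexing step from Theorem~\ref{psexists}. Since $\Psf_\theta$-divergence forces the singular-value gaps $\alpha_k(\kappa(\gamma))$, $k\in\theta$, to $+\infty$, the map $U_\theta$ is asymptotically $\Gamma$-equivariant, i.e. $\alpha\cdot U_\theta(\gamma)$ and $U_\theta(\alpha\gamma)$ become arbitrarily close in $\Fc_\theta$ as $\gamma\to\infty$; hence $\alpha_*\mu$ is also the weak limit of $\frac{1}{P_\Gamma^\phi(s_n)}\sum_\gamma e^{-s_n\phi(\kappa_\theta(\gamma))}\,\mathcal{D}_{U_\theta(\alpha\gamma)}$, which after the substitution $\gamma\mapsto\alpha^{-1}\gamma$ becomes the weak limit of
$$\frac{1}{P_\Gamma^\phi(s_n)}\sum_\gamma e^{-s_n\big(\phi(\kappa_\theta(\alpha^{-1}\gamma))-\phi(\kappa_\theta(\gamma))\big)}\,e^{-s_n\phi(\kappa_\theta(\gamma))}\,\mathcal{D}_{U_\theta(\gamma)}.$$
Feeding in the limiting identity $\phi\big(\kappa_\theta(\alpha^{-1}\gamma_k)\big)-\phi\big(\kappa_\theta(\gamma_k)\big)\to\phi\big(B_\theta(\alpha^{-1},F)\big)$ whenever $U_\theta(\gamma_k)\to F\in\Lambda_\theta(\Gamma)$, together with continuity of $\phi\circ B_\theta$ and compactness of $\Lambda_\theta(\Gamma)$, one reads off $\tfrac{d\alpha_*\mu}{d\mu}(z)=e^{-\delta\phi(B_\theta(\alpha^{-1},z))}$ for $\mu$-a.e.\ $z$, so $\mu$ is a $\phi$-Patterson--Sullivan measure. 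In the $h$-modified case the extra factor $h(e^{s\phi(\kappa_\theta(\alpha^{-1}\gamma))})/h(e^{s\phi(\kappa_\theta(\gamma))})$ is pinned between $e^{-\epsilon C_\alpha}$ and $e^{\epsilon C_\alpha}$ using the slow-growth property and the uniform bound $|\phi(\kappa_\theta(\alpha^{-1}\gamma))-\phi(\kappa_\theta(\gamma))|\le C_\alpha$, so a further diagonal passage $\epsilon\to0$ recovers the exact cocycle relation.

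The hard part is the asymptotic Cartan--Iwasawa comparison that underlies the last step, namely
$$\Big|\phi\big(\kappa_\theta(\alpha^{-1}\gamma)\big)-\phi\big(\kappa_\theta(\gamma)\big)-\phi\big(B_\theta(\alpha^{-1},U_\theta(\gamma))\big)\Big|\longrightarrow 0\qquad\text{as }\gamma\to\infty\text{ in }\Gamma,$$
and, relatedly, the asymptotic $\Gamma$-equivariance of $U_\theta$ used above; both are precisely where the $\Psf_\theta$-divergence hypothesis enters, since it is the divergence of the gaps $\alpha_k(\kappa(\gamma))$, $k\in\theta$, that makes $U_\theta(\gamma)$ well-defined and convergent and that makes the standard estimates relating the Cartan projection to the partial Iwasawa cocycle uniform along $\Gamma$. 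I would expect the bulk of the write-up to consist of stating and proving this comparison lemma for $\Psf_\theta$-divergent groups (in the spirit of Quint and Benoist--Quint), after which the rest is a routine transcription of Patterson's argument.
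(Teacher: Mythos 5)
Your proposal is correct, and the analytic core (Patterson's construction driven by the Cartan--Iwasawa comparison $\kappa_\theta(\alpha^{-1}\gamma)-\kappa_\theta(\gamma)\to B_\theta(\alpha^{-1},F)$ when $U_\theta(\gamma)\to F$) is exactly what the paper relies on. The one structural difference is where you put the Dirac masses. You place them at $U_\theta(\gamma)\in\Fc_\theta$, which forces you to prove, in addition, the asymptotic equivariance $d\big(\alpha\cdot U_\theta(\gamma),\,U_\theta(\alpha\gamma)\big)\to 0$ as $\gamma\to\infty$ (true for $\Psf_\theta$-divergent groups, with the error controlled by $e^{-\min_{k\in\theta}\alpha_k(\kappa(\gamma))}$ up to a constant depending on $\alpha$), and then to argue that the finitely many ``bad'' terms are killed by the divergence of the normalizing series. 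The paper instead defines a compact topology on $\Gamma\sqcup\Lambda_\theta(\Gamma)$ (where $\gamma_n\to z$ iff $U_\theta(\gamma_n)\to z$) and puts the mass $\mathcal D_\gamma$ at the group element itself; then $\alpha_*\mathcal D_\gamma=\mathcal D_{\alpha\gamma}$ holds \emph{exactly}, the re-indexing step is verbatim Patterson, and the only thing to check about the compactification is that the $\Gamma$-action extends continuously to the boundary --- which is essentially the same estimate you call asymptotic equivariance, just absorbed into the topology. So the two routes trade the same lemma between two places; yours costs one extra approximation argument in the weak-$*$ limit, the paper's costs the verification that the compactified space is Hausdorff/compact and that the action extends. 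Your handling of the convergent case via Patterson's slowly varying function $h$, and your observation that only $\Psf_\theta$-divergence (not transversality) is used, both match the paper.
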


\begin{proof}[Sketch of proof]
One may define a compact topology on $\Gamma\sqcup\Lambda_\theta(\Gamma)$ which agrees with the topologies on $\Gamma$ and
$\Lambda_\theta(\Gamma)$, and so that $\{\gamma_n\}\subset \Gamma$ converges to $z\in\Lambda_\theta(\Gamma)$ if and only
if $\{U_\theta(\gamma_n)\}$ converges to $z$. We then define, for all $s>\delta^\phi(\Gamma)$, the measure
$$\mu_s=\frac{1}{P_\Gamma^\phi(s)}\sum_{\gamma\in\Gamma} e^{-s\phi(\kappa_\theta(\gamma))} \mathcal D_\gamma.$$
One may then proceed exactly as in the proof of Theorem \ref{psexists} using the properties of the partial Iwasawa cocyle in place of
the properties of the Busemann cocycle.
\end{proof}

\section{The Shadow Lemma for transverse groups}

From our description of the $\PSL(2,\Rb)$ case, we see that to understand the ergodicity properties of Patterson--Sullivan measures of divergent subgroups in $\SL(d,\Rb)$, we need two more ingredients. The first is a notion of shadows in the limit set of the transverse group so that the analog of Sullivan's Shadow Lemma holds. The second is a flow space for the divergent subgroup that admits a Hopf decomposition. Unfortunately, neither of these are immediately available to us in this generality. However, if we further assume that our divergent group is transverse, then we have at our disposal a tool from projective geometry which allows us to access both of these. Later work of Kim--Oh--Wang \cite{KOW} and Blayac--Canary--Zhu--Zimmer \cite{BCZZ1,BCZZ2} give a way to bypass this tool, but we still present it here as we believe it helps to provide intuition for the generalized arguments given in the later works. 

In this section, we discuss this projective geometry tool, and how one can use it to define the required notion of shadows. We will also discuss the consequences of the analog of Sullivan's Shadow Lemma. 

We say that an open set $\Omega\subset \Pb(\Rb^{d_0})$ is a \emph{properly convex domain} if it is bounded and convex in some affine chart for
$\Pb(\Rb^{d_0})$. If $x,y\in\overline\Omega$, we let $\overline{xy}\subset\overline\Omega$ denote the closed line segment joining them in the affine chart. On $\Omega$, there is a natural Finsler (but typically non-Riemannian) metric on $\Omega$ called the \emph{Hilbert metric}, which is defined as followed. If $x,y\in\Omega$, then let $w,z$ be the points of intersection of the line through $x$ and $y$ with $\partial\Omega$,
labelled so that the points appear in the order $w,x,y,z$ on the line. We then define
$$d_\Omega(x,y)=\frac{1}{2} \log\left(\frac{\norm{w-y}\cdot \norm{x-z}}{\norm{w-x}\cdot \norm{y-z}}\right),$$
where $\norm{a-b}$ denotes some (any) Euclidean distance between $a$ and $b$ in some (any) affine chart that contains $\overline{\Omega}$. Projective line segments are geodesics for this metric, but they need not be the only geodesics. The Hilbert metric is invariant under
the group $\mathrm{Aut}(\Omega)$ of 
projective automorphisms of $\mathbb{RP}^{d_0}$ which preserve $\Omega$.
If $\Omega$ is an ellipsoid, then $(\Omega,d_\Omega)$ is isometric to $\mathbb H^{d_0-1}$ (in fact, up to translation by an element of $\PGL(d_0,\Rb)$ this is the only properly convex domain for which the Hilbert metric is induced by a Riemannian metric). 

If we pick a basepoint $b_0\in\Omega$, we may define shadows in analogy with the hyperbolic setting: If $p\in\Omega$ and
$r>0$, then the \emph{shadow} from $b_0$ of a ball of radius $r$ centered about $p$ is
$$\mathcal O_r(b_0,p)=\{z\in\partial\Omega : d_\Omega(p,[b_0,z])<r\}$$
where $[b_0,z]$ is the projective line segment in $\overline{\Omega}$ joining $b_0$ to $z$.

Indeed, in the case when $\Omega$ is an ellipsoid, these shadows %and the unit tangent bundle 
agree with the ones defined for $\Hb^{d_0-1}$.
 
If $\Gamma_0\subset\mathsf{PGL}(d_0,\mathbb R)$ is a discrete subgroup that preserves a properly convex domain $\Omega\subset\Pb(\Rb^{d_0})$, we define its \emph{full orbital limit set} to be
$$\Lambda_\Omega(\Gamma_0)=\left\{z\in \partial \Omega : z=\lim \gamma_n(p)\text{ for some } p\in\Omega\text{ and } \{\gamma_n\}\subset\Gamma\right\}.$$
We then say that $\Gamma_0$ is \emph{projectively visible} if
\begin{enumerate}
\item for all $x,y\in\Lambda_\Omega(\Gamma_0)$, the interior of $[x,y]$ lies
in $\Omega$, and
\item every point $x\in\Lambda_\Omega(\Gamma_0)$ is a \emph{$C^1$-smooth  point}, i.e. there is a unique projective hyperplane in $\mathbb{RP}^{d_0}$ that contains $x$ but does not intersect $\Omega$.
\end{enumerate}

These conditions are not enough to guarantee that the Hilbert metric on $\Omega$ is Gromov hyperbolic,  but it ensures sufficient hyperbolic-like behavior to allow us to mimic classical arguments from the hyperbolic setting. In particular, we may define shadows in $\Lambda_\Omega(\Gamma_0)$ to be the shadows defined above intersected with $\Lambda_\Omega(\Gamma_0)$. 

To relate projective visible groups with transverse subgroups, we use ideas from the work of Zimmer \cite{Zimmer} and Danciger--Gueritaud--Kassel \cite{DGK} to prove the following theorem, which says that every transverse group admits a ``good'' isomorphism to a projectively visible group. This is the tool that allows us to define the shadows and the flow space for transverse subgroups. 

\begin{theorem}[{\cite[Theorem 6.2]{CZZ3}}]
\label{projvisible}
If $\Gamma\subset \mathsf{SL}(d,\mathbb R)$ is $\Psf_\theta$-transverse, then there exists a projectively
visible group $\Gamma_0\subset\mathrm{Aut}(\Omega)\subset\mathsf{PGL}(d_0+1,\mathbb R)$, an
isomorphism $\rho:\Gamma_0\to\Gamma$ and a $\rho$-equivariant homeomorphism
$\xi:\Lambda_\Omega(\Gamma_0)\to\Lambda_\theta(\Gamma)$.
\end{theorem}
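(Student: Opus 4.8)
The plan is to construct the properly convex domain $\Omega$ and the visible action explicitly, starting from the geometry of the $\theta$-limit set $\Lambda_\theta(\Gamma)$ sitting inside a product of Grassmannians. First I would reduce to the case $\theta = \{1, d-1\}$: since $\Lambda_\theta(\Gamma)$ consists of mutually transverse flags and each flag in $\mathcal{F}_\theta$ is determined by (and determines) a configuration in $\Gr_1(\Rb^d) \times \Gr_{d-1}(\Rb^d)$, one can project to the pair $(F^{k_0}, F^{d-k_0})$ for a chosen $k_0 \in \theta$, or — following Danciger--Gu\'eritaud--Kassel \cite{DGK} — pass to an exterior power representation $\SL(d,\Rb) \to \SL(\bigwedge^{k_0}\Rb^d)$ that sends $F^{k_0}$ to a line and $F^{d-k_0}$ to a hyperplane, so that transversality of flags becomes transversality of a point and a hyperplane in $\Pb(\bigwedge^{k_0}\Rb^d)$. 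This reduces the problem to: given a discrete group acting on $\Pb(\Rb^{d_0})$ (with $d_0 = \binom{d}{k_0}$) with a limit set $\Lambda$ of points, each equipped with a transverse supporting hyperplane, mutually in general position, build an invariant properly convex domain whose full orbital limit set is $\Lambda$ via a homeomorphism.

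Next I would build $\Omega$ itself. The natural candidate, as in Zimmer \cite{Zimmer} and \cite{DGK}, is to take the convex hull in a suitable affine chart of the set $\Lambda$ (or a slight enlargement). The issue is that this hull may not be properly convex or the group may not preserve an affine chart containing it; here one invokes the \emph{doubling} or \emph{Vinberg/Zimmer} construction, working in $\Pb(\Rb^{d_0} \oplus (\Rb^{d_0})^*)$ or in $\Pb(\mathrm{Sym}^2)$ so that the pair (point, transverse hyperplane) lifts to a single point in a larger projective space on which a symmetric bilinear form of signature $(d_0, d_0)$ or similar is available; the limit points then lie on a ``light cone''-type quadric, their convex hull is automatically properly convex, and the supporting-hyperplane data built into the construction forces $C^1$-smoothness at limit points. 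The dimension $d_0 + 1$ in the statement strongly suggests exactly this: $\Rb^{d_0+1}$ carries a form, limit points sit in the associated affine cone's boundary, and $\Omega$ is the (properly convex) interior of the convex hull of $\Lambda$ together with the cone's apex or a reference point. One then sets $\Gamma_0$ to be the image of $\Gamma$ under the resulting representation into $\mathsf{PGL}(d_0+1, \Rb)$, restricted so it preserves $\Omega$; discreteness and faithfulness follow from $\Psf_\theta$-divergence of $\Gamma$ and properness of the $\Gamma$-action on transverse flags (Proposition~\ref{prop: convergence group} and the convergence-group dynamics quoted before it).

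The three things to check are then: (i) $\Gamma_0 \subset \Aut(\Omega)$ is discrete and $\rho\colon \Gamma_0 \to \Gamma$ is an isomorphism — this uses that the constructed representation is a restriction of (a power of) the identity, so it is injective, and discreteness follows from $\Psf_\theta$-divergence pushing points to the boundary; (ii) the full orbital limit set $\Lambda_\Omega(\Gamma_0)$ is precisely (the image of) $\Lambda_\theta(\Gamma)$, and the map $\xi$ is a $\rho$-equivariant homeomorphism — here one uses the convergence-group dynamics: for a sequence $\gamma_n$ with $U_\theta(\gamma_n) \to F^+$, the quoted consequence of the Cartan decomposition gives $\gamma_n(F) \to F^+$ for all $F$ transverse to the other limit, which translates into $\gamma_n(p) \to \xi(F^+)$ in $\overline\Omega$ for all $p \in \Omega$, so orbital accumulation points are exactly the images of $\theta$-limit points, and bijectivity/continuity of $\xi$ come from the explicit formula for the embedding of $\Lambda_\theta(\Gamma)$ into $\partial\Omega$; (iii) projective visibility of $\Gamma_0$, i.e.\ segments between limit points stay in $\Omega$ and limit points are $C^1$-smooth — the segment condition is mutual transversality of the flags translated through the construction (two transverse supporting hyperplanes cannot both contain the segment's endpoints degenerately), and $C^1$-smoothness is exactly the statement that the transverse hyperplane attached to each limit flag is the \emph{unique} supporting hyperplane there, which is forced by the quadric/doubling setup.

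I expect the main obstacle to be step (iii), specifically proving $C^1$-smoothness at every point of $\Lambda_\Omega(\Gamma_0)$: one must rule out the existence of a second supporting hyperplane at a limit point, and this requires that the transversality in $\Lambda_\theta(\Gamma)$ be used \emph{quantitatively and uniformly} — the convergence group dynamics must be leveraged to show that if two hyperplanes supported $\Omega$ at $\xi(F)$, one could translate by a sequence $\gamma_n$ contracting toward $F$ and obtain, in the limit, a limit flag failing transversality with some other limit flag, contradicting the hypothesis. Making this limiting argument rigorous — controlling the convex hull's boundary behavior under the group action and relating tangent hyperplanes of $\partial\Omega$ at limit points to the flag data — is the technical heart of the proof, and is where the input from \cite{Zimmer} and \cite{DGK} is essential.
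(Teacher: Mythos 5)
The survey itself gives no proof of Theorem \ref{projvisible} --- it defers entirely to \cite[Thm.~6.2]{CZZ3} and says only that the argument ``uses ideas from the work of Zimmer and Danciger--Gu\'eritaud--Kassel'' --- and your outline is indeed built from the same two ingredients as that proof: a Pl\"ucker/exterior-power reduction converting flag transversality into point--hyperplane transversality, followed by the doubling construction in $V\oplus V^*$, where consistently chosen lifts of the limit points are isotropic for the natural signature-$(p,p)$ pairing and pair strictly positively with one another, so that their convex hull is properly convex and each limit point carries a preferred supporting hyperplane. Two of your justifications need repair, though. First, a flag in $\Fc_\theta$ is \emph{not} determined by the single pair $(F^{k_0},F^{d-k_0})$ when $|\theta|>2$; what saves the reduction is that two \emph{distinct, mutually transverse, nested} limit flags must differ in every component (if $F^{k_0}=G^{k_0}$ with $k_0\le d-k_0$, then $F^{k_0}\subset G^{d-k_0}$, contradicting $F^{k_0}+G^{d-k_0}=\Rb^d$), so the projection $\Lambda_\theta(\Gamma)\to\Lambda_{\{k_0,d-k_0\}}(\Gamma)$ is a homeomorphism; alternatively one avoids the issue entirely by using the tensor product $\bigotimes_{k\in\theta}\bigwedge^k\Rb^d$, which embeds all of $\Fc_\theta$ at once and turns mutual transversality into nonvanishing of a product of pairings. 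Second, faithfulness of $\rho$ is not automatic ``because the representation is a power of the identity'': exterior and tensor powers of $\SL(d,\Rb)$ have nontrivial kernels, and one must check that the kernel meets $\Gamma$ trivially. Finally, you are right that $C^1$-smoothness at limit points is the delicate step; note that the open convex hull alone can a priori have corners at limit points, so in \cite{CZZ3} (following \cite{DGK,Zimmer}) one works with a domain pinched between the convex hull and the dual domain cut out by the limit hyperplanes, combined with the contraction argument you sketch.
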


For the remainder of this section, let $\Gamma\subset\SL(d,\Rb)$ be a $\Psf_\theta$-transverse subgroup, and let $\Gamma_0$, $\Omega$, $\rho$ and $\xi$ be as given in Theorem \ref{projvisible}. If we fix a basepoint $b_0\in\Omega$, then for all $p\in\Omega$ and $r>0$, we may think of
\[\xi\big( \Oc_r(b_0,p) \cap \Lambda_\Omega(\Gamma_0) \big)\subset\Lambda_\theta(\Gamma)\]
as the analog of a shadow. We prove that this notion of shadows satisfy a version of Sullivan's Shadow Lemma.
(Other higher rank analogues of Sullivan's Shadow Lemma have been obtained by Albuquerque \cite{albuquerque}, Link \cite{link}, Quint \cite{quint-ps},
Dey--Kapovich \cite{DK1}, Lee--Oh \cite{LeeOh} and Sambarino \cite{sambarino-dichotomy}.)

 \begin{proposition}[{\cite[Prop. 7.1]{CZZ3}}]
 \label{prop:shadow estimates}
Suppose that $\mu$ is a $\phi$-Patterson--Sullivan measure for $\Gamma$. For any $b_0 \in \Omega$, there exists $R_0>0$ such that: if  $r \ge R_0$, then
there exists $C=C(b_0,r)> 1$ so that 
\begin{align*}
C^{-1} e^{-\delta^\phi(\Gamma) \phi(\kappa_\theta(\rho(\gamma)))} \le \mu\Big(\xi\big( \Oc_r(b_0,\gamma(b_0)) \cap \Lambda_\Omega(\Gamma_0) \big) \Big)  \le Ce^{-\delta^\phi(\Gamma) \phi(\kappa_\theta(\rho(\gamma)))}
\end{align*}
for all $\gamma \in \Gamma_0$.
\end{proposition}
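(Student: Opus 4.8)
The plan is to mimic the proof of Sullivan's Shadow Lemma given earlier, working in the properly convex domain $\Omega$ with the Hilbert metric and transporting everything through the equivariant homeomorphism $\xi$. Throughout, write $\delta := \delta^\phi(\Gamma)$, let $\nu := \xi^{-1}_*\mu$ be the pushforward of $\mu$ to a probability measure on $\Lambda_\Omega(\Gamma_0)$, and observe that the $\phi$-Patterson--Sullivan transformation rule for $\mu$ becomes a transformation rule for $\nu$ with respect to the cocycle $g \mapsto \phi(B_\theta(\rho(g),\xi(\bullet)))$ on $\Gamma_0 \times \Lambda_\Omega(\Gamma_0)$; set $c(g,z) := \phi(B_\theta(\rho(g),\xi(z)))$. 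The first step is a lower bound on the mass of ``large'' shadows seen from moving basepoints: there exist $\eps_0 > 0$ and $R_0 > 0$ so that $\nu(\Oc_{R_0}(\gamma(b_0), b_0) \cap \Lambda_\Omega(\Gamma_0)) \ge \eps_0$ for all $\gamma \in \Gamma_0$. As in the Fuchsian case, if this fails one extracts a sequence $\gamma_n$ along which these shadows have vanishing $\nu$-mass; using that $\Gamma_0$ is a convergence group on $\Lambda_\Omega(\Gamma_0)$ (via Proposition \ref{prop: convergence group} and Theorem \ref{projvisible}) one passes to a subsequence so that these shadows exhaust $\Lambda_\Omega(\Gamma_0)$ or $\Lambda_\Omega(\Gamma_0) \setminus \{z\}$ for a single point $z$; since $\Lambda_\Omega(\Gamma_0)$ is perfect and the convergence action is minimal, the transformation rule for $\nu$ forces $\nu(\Lambda_\Omega(\Gamma_0) \setminus \{z\}) > 0$, a contradiction.

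The second step is the key cocycle estimate: for $r \ge R_0$ there is a constant $D = D(b_0,r)$ so that for every $\gamma \in \Gamma_0$ and every $z \in \Oc_r(b_0,\gamma(b_0)) \cap \Lambda_\Omega(\Gamma_0)$,
\begin{equation*}
\big| \phi(\kappa_\theta(\rho(\gamma))) + c(\gamma^{-1},z) \big| \le D.
\end{equation*}
This is the projective-geometry heart of the argument and the place I expect the real work. The point is that $\phi(\kappa_\theta(\rho(\gamma)))$ is comparable, up to additive error depending only on $b_0$, to the Hilbert distance $d_\Omega(b_0,\gamma(b_0))$ weighted in the direction $\kappa_\theta(\rho(\gamma))$, and that the partial Iwasawa cocycle $\phi(B_\theta(\rho(\gamma^{-1}),\xi(z)))$ is, again up to bounded error, minus the Busemann-type quantity measuring how far $\gamma(b_0)$ lies along the ray from $b_0$ toward $z$. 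When $z$ lies in the radius-$r$ shadow of $\gamma(b_0)$, the point $\gamma(b_0)$ is within Hilbert distance $r$ of the geodesic $[b_0,z]$, so these two quantities agree up to $O(r)$; making this precise requires the comparison estimates between Cartan projections, the Iwasawa cocycle, and Hilbert geometry established in \cite[Sections 4--7]{CZZ3}, together with $C^1$-smoothness and the visibility property of $\Lambda_\Omega(\Gamma_0)$ to control the cross-ratio defining $d_\Omega$ near the boundary.

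With these two steps in hand, the proposition follows exactly as in the Fuchsian sketch. Using the transformation rule,
\begin{equation*}
\nu\big(\Oc_r(\gamma^{-1}(b_0),b_0) \cap \Lambda_\Omega(\Gamma_0)\big) = \int_{\Oc_r(b_0,\gamma(b_0)) \cap \Lambda_\Omega(\Gamma_0)} e^{-\delta\, c(\gamma^{-1},z)}\, d\nu(z),
\end{equation*}
and the integrand is pinched between $e^{-\delta D} e^{\delta \phi(\kappa_\theta(\rho(\gamma)))}$ and $e^{\delta D} e^{\delta \phi(\kappa_\theta(\rho(\gamma)))}$ by Step 2. The left-hand side is at most $1$ and at least $\eps_0$ by Step 1 (applied to $\gamma^{-1}$). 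Rearranging gives
\begin{equation*}
\frac{\eps_0}{e^{\delta D}}\, e^{-\delta \phi(\kappa_\theta(\rho(\gamma)))} \le \nu\big(\Oc_r(b_0,\gamma(b_0)) \cap \Lambda_\Omega(\Gamma_0)\big) \le e^{\delta D}\, e^{-\delta \phi(\kappa_\theta(\rho(\gamma)))},
\end{equation*}
and applying $\xi$ to translate back to $\mu$ yields the claim with $C(b_0,r) = e^{\delta D}/\min(\eps_0,1)$. The main obstacle, to repeat, is Step 2: one must genuinely verify that the partial Iwasawa cocycle of $\rho(\Gamma_0)$ pulled back to $\Lambda_\Omega(\Gamma_0)$ is boundedly comparable to the Hilbert-metric Busemann cocycle on $\Omega$ — this is where transversality, $C^1$-smoothness, and the dictionary between $\SL(d,\Rb)$-data and $\Aut(\Omega)$-data all get used.
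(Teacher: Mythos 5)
Your proposal is correct and follows essentially the same route as the paper's: transport $\mu$ to $\Lambda_\Omega(\Gamma_0)$ via $\xi$, establish a uniform lower bound $\eps_0$ for shadows viewed from the orbit using the convergence-group dynamics and non-elementarity, and then pinch the Radon--Nikodym derivative on a shadow by the additive comparison between $\phi(\kappa_\theta(\rho(\gamma)))$ and $-\phi(B_\theta(\rho(\gamma)^{-1},\xi(z)))$ for $z$ in the shadow. The one step you defer rather than prove --- that cocycle estimate --- is indeed the genuine technical content of \cite[Section 7]{CZZ3} (it is where transversality, $C^1$-smoothness of the boundary, and the dictionary between Hilbert-geometry and $\mathsf{SL}(d,\Rb)$-data enter), and you have identified it and its role accurately.
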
 

As in the Fuchsian case we can derive the following immediate consequences. Let $\Lambda^{\rm con}_\theta(\Gamma)$ denote its {\em $\theta$-conical limit set}, i.e. the set of conical limit points for the action of $\Gamma$ on $\Lambda_\theta(\Gamma)$.

\begin{corollary}
[{\cite[Prop. 8.1]{CZZ3}}]
Suppose that $\mu$ is a $\phi$-Patterson--Sullivan measure.
\begin{enumerate}
\item
If $z\in\Lambda^{con}_\theta(\Gamma)$, then $\mu(\{z\})=0$,
and
\item
If $P_\Gamma^\phi(\delta^\phi(\Gamma))<+\infty$, then $\mu(\Lambda^{con}_\theta(\Gamma))=0$.
\end{enumerate}
In particular, if $\Gamma$ is $P_\theta$-Anosov, then $P_\Gamma^\phi(\delta^\phi(\Gamma))=+\infty$.
\end{corollary}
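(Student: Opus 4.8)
The plan is to mimic, almost verbatim, the Fuchsian argument for Corollary~\ref{shadow cor}, using Proposition~\ref{prop:shadow estimates} in place of Sullivan's Shadow Lemma and the convergence-group formulation of conical limit points. Fix a basepoint $b_0\in\Omega$ and let $R_0$, $C=C(b_0,r)$ be the constants from Proposition~\ref{prop:shadow estimates}; throughout we work with a fixed $r\ge R_0$.

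\emph{Step 1 (no atoms on the conical limit set).} Let $z\in\Lambda^{con}_\theta(\Gamma)$. By definition of conical limit point for the convergence action of $\Gamma$ on $\Lambda_\theta(\Gamma)$, there are distinct $a,b\in\Lambda_\theta(\Gamma)$ and a sequence of distinct $\{\gamma_n\}\subset\Gamma$ with $\gamma_n(z)\to a$ and $\gamma_n(y)\to b$ for all $y\ne z$. Transporting this through the equivariant homeomorphism $\xi$ of Theorem~\ref{projvisible}, the corresponding sequence in $\Gamma_0$ exhibits $\xi^{-1}(z)$ as a point whose geodesic ray $[b_0,\xi^{-1}(z)]$ in $(\Omega,d_\Omega)$ is followed within bounded Hilbert distance by a subsequence of the orbit $\gamma_n^{-1}(b_0)$; equivalently, after relabeling, $\xi^{-1}(z)\in\Oc_r(b_0,\gamma_n(b_0))\cap\Lambda_\Omega(\Gamma_0)$ for all $n$, with $\phi(\kappa_\theta(\rho(\gamma_n)))\to+\infty$ (this last divergence uses $\Psf_\theta$-divergence of $\Gamma$, i.e. $\alpha_k(\kappa(\gamma_n))\to\infty$ for $k\in\theta$, together with the fact that $\phi$ is non-negative on $\Bc_\theta(\Gamma)$ — here one should take $\phi$ strictly positive on $\Bc_\theta(\Gamma)\setminus\{\vec 0\}$, or note that boundedness of $\phi(\kappa_\theta(\gamma_n))$ would force $\delta^\phi(\Gamma)=+\infty$, contrary to hypothesis). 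Then Proposition~\ref{prop:shadow estimates} gives
$$\mu(\{z\})\le \mu\Big(\xi\big(\Oc_r(b_0,\gamma_n(b_0))\cap\Lambda_\Omega(\Gamma_0)\big)\Big)\le C\, e^{-\delta^\phi(\Gamma)\phi(\kappa_\theta(\rho(\gamma_n)))}\xrightarrow[n\to\infty]{}0,$$
so $\mu(\{z\})=0$.

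\emph{Step 2 (full conical set has measure zero when the Poincar\'e series converges).} Assume $P_\Gamma^\phi(\delta^\phi(\Gamma))<+\infty$. As in the Fuchsian case, stratify $\Lambda^{con}_\theta(\Gamma)=\bigcup_{r>R_0}\Lambda^{con}_{\theta,r}(\Gamma)$, where $\Lambda^{con}_{\theta,r}(\Gamma)$ consists of those $z$ for which one can take the approximating $\gamma_n$ with $\xi^{-1}(z)\in\Oc_r(b_0,\gamma_n(b_0))$; since this is an increasing union it suffices to bound each piece. Fix an enumeration $\{\gamma_1,\gamma_2,\dots\}$ of $\Gamma$. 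If $z\in\Lambda^{con}_{\theta,r}(\Gamma)$ then $\xi^{-1}(z)\in\Oc_r(b_0,\gamma_m(b_0))\cap\Lambda_\Omega(\Gamma_0)$ for infinitely many $m$, hence for every $N$,
$$\Lambda^{con}_{\theta,r}(\Gamma)\subset\bigcup_{n\ge N}\xi\big(\Oc_r(b_0,\gamma_n(b_0))\cap\Lambda_\Omega(\Gamma_0)\big).$$
Applying Proposition~\ref{prop:shadow estimates} term by term,
$$\mu\big(\Lambda^{con}_{\theta,r}(\Gamma)\big)\le C\sum_{n\ge N} e^{-\delta^\phi(\Gamma)\phi(\kappa_\theta(\rho(\gamma_n)))},$$
and the tail of the convergent series $P_\Gamma^\phi(\delta^\phi(\Gamma))$ tends to $0$ as $N\to\infty$ (note $\phi(\kappa_\theta(\rho(\gamma_n)))$ is, up to bounded error, the same as $\phi(\kappa_\theta(\gamma_n))$ since $\rho$ is the isomorphism of Theorem~\ref{projvisible} and one may identify the indexing — or simply re-enumerate via $\rho$). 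Hence $\mu(\Lambda^{con}_{\theta,r}(\Gamma))=0$ for every $r>R_0$, and therefore $\mu(\Lambda^{con}_\theta(\Gamma))=0$.

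\emph{Step 3 (the Anosov case).} If $\Gamma$ is $\Psf_\theta$-Anosov then by definition every point of $\Lambda_\theta(\Gamma)$ is conical, so $\mu(\Lambda^{con}_\theta(\Gamma))=\mu(\Lambda_\theta(\Gamma))=1$. If we had $P_\Gamma^\phi(\delta^\phi(\Gamma))<+\infty$, Step~2 would give $\mu(\Lambda^{con}_\theta(\Gamma))=0$, a contradiction; hence $P_\Gamma^\phi(\delta^\phi(\Gamma))=+\infty$. (Here one uses that a $\phi$-Patterson--Sullivan measure exists, which holds since $\delta^\phi(\Gamma)<+\infty$ for Anosov $\Gamma$ by \cite[Corollary 5.7.2]{sambarino-dichotomy} when $\phi$ is suitably positive; for a general $\phi$ non-negative on $\Bc_\theta(\Gamma)$ with $\delta^\phi(\Gamma)<+\infty$ one invokes Theorem~\ref{trans:psexists}.)

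\emph{Main obstacle.} The only genuinely non-formal point is Step~1's identification of the convergence-group notion of ``conical'' with ``the ray $[b_0,\xi^{-1}(z)]$ is shadowed by an orbit sequence at bounded Hilbert distance,'' i.e.\ translating between the dynamical definition on $\Lambda_\theta(\Gamma)$ and the geometric definition in the projectively visible model $(\Omega,d_\Omega)$. This requires knowing that $\xi$ intertwines the two notions of conical limit point — which is part of the structure theory behind Theorem~\ref{projvisible} and the projective visibility of $\Gamma_0$ — and it is exactly the place where the projective-geometry tool is doing real work; everything downstream is a routine transcription of the Fuchsian proof.
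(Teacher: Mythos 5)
Your proposal is correct and is essentially the paper's own argument: the survey derives this corollary exactly as in the Fuchsian case (Corollary~\ref{shadow cor}), substituting Proposition~\ref{prop:shadow estimates} for Sullivan's Shadow Lemma and using the correspondence, via the equivariant homeomorphism $\xi$ of Theorem~\ref{projvisible}, between conical limit points of the convergence action on $\Lambda_\theta(\Gamma)$ and geometrically conical points of $\Lambda_\Omega(\Gamma_0)$. You also correctly isolate the only non-formal input --- that $\xi$ intertwines the two notions of conical point --- which is indeed part of the structure theory established in \cite{CZZ3} behind Theorem~\ref{projvisible}.
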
 

By applying the argument of Dal'bo--Otal--Peign\'e  \cite{DOP}, we can then generalize Brooks' result.

\begin{theorem}[{\cite[Thm. 4.1]{CZZ3}}] \label{thm: entropy drop}
 Suppose that $\Gamma\subset\mathsf{SL}(d,\mathbb R)$ is $P_\theta$-transverse and $\phi\in\mathfrak{a}_\theta^*$ satisfies $\delta^\phi(\Gamma)<+\infty$. If $\Gamma_0\subset \Gamma$ is a subgroup such that $\Lambda_\theta (\Gamma_0)$ is a proper subset of $\Lambda_\theta(\Gamma)$
and $P_{\Gamma_0}^\phi(\delta^\phi(\Gamma_0))=+\infty$, then
$$\delta^{\phi}(\Gamma_0)<\delta^{\phi}(\Gamma).$$
\end{theorem}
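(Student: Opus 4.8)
The plan is to mimic the Fuchsian proof of Theorem \ref{entropy drop} (the Dal'bo--Otal--Peign\'e argument), substituting the projective-geometry tools of Theorem \ref{projvisible} and Proposition \ref{prop:shadow estimates} for the hyperbolic-geometry inputs. First I would fix a $\phi$-Patterson--Sullivan measure $\mu$ for $\Gamma$, which exists by Theorem \ref{trans:psexists} since $\delta^\phi(\Gamma)<+\infty$, and transport everything to the projectively visible model: let $\Gamma_0', \Omega, \rho, \xi$ be as in Theorem \ref{projvisible} for $\Gamma$, set $b_0\in\Omega$, and pull back $\mu$ to a measure $\nu$ on $\Lambda_\Omega(\Gamma_0')$ via $\xi$. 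Since $\Lambda_\theta(\Gamma_0)\subsetneq\Lambda_\theta(\Gamma)$, the set $\xi^{-1}(\Lambda_\theta(\Gamma_0))$ is a proper closed $\rho^{-1}(\Gamma_0)$-invariant subset of $\Lambda_\Omega(\Gamma_0')$, so I may choose a nonempty open set $W\subset\Lambda_\Omega(\Gamma_0')$ whose closure is disjoint from it; then $\mu(\xi(W))>0$ (using that $\Lambda_\theta(\Gamma)$ has no atoms on conical points and, more to the point, that $\xi_*\nu=\mu$ is a probability measure with full support on a perfect set --- one needs $\mu(\xi(W))>0$, which follows from minimality of the $\Gamma$-action on $\Lambda_\theta(\Gamma)$ together with quasi-invariance, exactly as in the argument establishing the Shadow Lemma's lower bound $\epsilon_0$).

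Next I would establish the key geometric estimate: there is a finite set $F\subset\rho^{-1}(\Gamma_0)\subset\Gamma_0'$ and a constant $D>0$ so that for all $g\in\rho^{-1}(\Gamma_0)\setminus F$ and all $z\in W$,
$$\big|\phi(B_\theta(\rho(g),\xi(z)))-\phi(\kappa_\theta(\rho(g)))\big|\le D.$$
In the Fuchsian proof this came from the angle between $\overline{i\gamma(i)}$ and $\overline{iz}$ being bounded below; here the replacement is that the points of $\Lambda_\theta(\Gamma_0)$ stay "uniformly transverse" to the flags $\xi(z)$ for $z\in W$, i.e. the distance in $\mathcal F_\theta$ between $\Lambda_\theta(\Gamma_0)$ and $\xi(W)$ is positive, and $U_\theta(\rho(g))\to\Lambda_\theta(\Gamma_0)$ as $g\to\infty$ in $\Gamma_0$. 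The standard comparison estimate between the Cartan projection $\kappa_\theta$ and the Iwasawa cocycle $B_\theta$ evaluated at flags bounded away from the "opposite" flag of $U_\theta(g^{-1})$ then gives the displayed bound; this is a known consequence of the Cartan decomposition (the same input already invoked for convergence dynamics before Proposition \ref{prop: convergence group}), with $F$ absorbing the finitely many $g$ for which $U_\theta(\rho(g))$ has not yet entered a small neighborhood of $\Lambda_\theta(\Gamma_0)$.

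With this in hand I would run the divergence argument. Assume for contradiction $\delta:=\delta^\phi(\Gamma)=\delta^\phi(\Gamma_0)$ (the inequality $\geq$ being automatic from $\Gamma_0\subset\Gamma$ and monotonicity of critical exponents). Since $\rho^{-1}(\Gamma_0)$ acts properly discontinuously on $\mathcal F_\theta$ minus the flags opposite to $\Lambda_\theta(\Gamma_0)$ --- in particular on an open neighborhood of $\xi(W)$ --- the number $N:=\#\{g\in\rho^{-1}(\Gamma_0): g(W)\cap W\neq\emptyset\}$ is finite, so the translates $\{g(W):g\in\rho^{-1}(\Gamma_0)\}$ cover $\Lambda_\Omega(\Gamma_0')$ with multiplicity at most $N$ (after removing the set they avoid, which has $\nu$-measure consistent with the estimate). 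Then, using quasi-invariance of $\mu$ and the defining cocycle relation,
$$\mu(\rho(g)(\xi(W)))=\int_{\xi(W)}e^{-\delta\,\phi(B_\theta(\rho(g),z))}\,d\mu(z)\ge e^{-\delta D}e^{-\delta\,\phi(\kappa_\theta(\rho(g)))}\mu(\xi(W))$$
for all $g\notin F$, and summing,
$$1=\mu(\Lambda_\theta(\Gamma))\ge\frac1N\sum_{g\in\rho^{-1}(\Gamma_0)}\mu(\rho(g)(\xi(W)))\ge\frac{e^{-\delta D}\mu(\xi(W))}{N}\sum_{g\notin F}e^{-\delta\,\phi(\kappa_\theta(\rho(g)))}=+\infty,$$
since $P^\phi_{\Gamma_0}(\delta)=P^\phi_{\Gamma_0}(\delta^\phi(\Gamma_0))=+\infty$. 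This contradiction yields $\delta^\phi(\Gamma_0)<\delta^\phi(\Gamma)$. The main obstacle is the geometric estimate of the second paragraph --- matching $\phi\circ B_\theta$ with $\phi\circ\kappa_\theta$ uniformly over $z\in W$ --- and, slightly more subtly, the bookkeeping that lets one replace "$\mathcal F_\theta$ minus one flag" by "$\Lambda_\Omega(\Gamma_0')$ minus a small piece" so that properness of the $\Gamma_0$-action (hence finiteness of $N$) and the covering-with-bounded-multiplicity step both go through; everything else is a direct transcription of the Fuchsian argument.
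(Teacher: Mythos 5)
Your proposal is correct and follows essentially the same route the paper indicates: the Dal'bo--Otal--Peign\'e divergence argument, with the hyperbolic comparison $|B(\gamma,z)-d(i,\gamma(i))|\le D$ replaced by the standard Cartan-versus-Iwasawa estimate $|\phi(B_\theta(\gamma,F))-\phi(\kappa_\theta(\gamma))|\le D$ for flags $F$ uniformly transverse to the accumulation set of $\{U_\theta(\gamma^{-1})\}_{\gamma\in\Gamma_0}$, which is exactly what transversality of $\Lambda_\theta(\Gamma)$ supplies. The only cosmetic remarks are that the detour through the projectively visible model is unnecessary here (the argument runs directly on $\Lambda_\theta(\Gamma)$), and that the finiteness of $N$ is most cleanly justified by the convergence-group property of Proposition \ref{prop: convergence group}, which gives proper discontinuity of the $\Gamma_0$-action on $\Lambda_\theta(\Gamma)\setminus\Lambda_\theta(\Gamma_0)$.
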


Coulon--Dal'bo--Sambusetti \cite{CDS} proved a version of Brook's result for CAT($-1$) spaces: they showed that if $\Gamma$ admits a cocompact, properly discontinuous action on a CAT($-1$)-space, then a subgroup of $\Gamma$ has strictly smaller critical exponent if and only if is is co-amenable. In this direction, the most general existing results are due to Coulon--Dougall--Schapira--Tapie \cite{CDST} for strongly positively recurrent actions on Gromov hyperbolic spaces. See \cite{CDST} for a more complete
history.

Recall that if $\Gamma$ is a finitely generated group, then a finitely generated subbgroup  $\Gamma_0$  of $\Gamma$ is  quasiconvex if the inclusion map from $\Gamma_0$ into $\Gamma$
is a quasi-isometric embedding. If $\Gamma$ is $P_\theta$-Anosov, then any quasiconvex subgroup is itself $P_\theta$-Anosov, see \cite[Lem. 2.3]{CLSS}. This gives the following new corollary.

\begin{corollary}[{\cite[Cor. 4.2]{CZZ3}}] If $\Gamma\subset\mathsf{SL}(d,\mathbb R)$ is $P_\theta$-Anosov, $\phi\in\mathfrak{a}_\theta^*$ satisfies
$\delta^\phi(\Gamma)<+\infty$ and $\Gamma_0\subset \Gamma$ is an infinite index quasiconvex subgroup, then
$$\delta^{\phi}(\Gamma_0)<\delta^{\phi}(\Gamma).$$
\end{corollary}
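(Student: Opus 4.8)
The plan is to derive this corollary directly from Theorem~\ref{thm: entropy drop}, which is the $\Psf_\theta$-transverse version of Brooks' theorem. The two hypotheses that need to be verified in order to apply that theorem to $\Gamma_0\subset\Gamma$ are: (i) $\Lambda_\theta(\Gamma_0)$ is a \emph{proper} subset of $\Lambda_\theta(\Gamma)$, and (ii) $P^\phi_{\Gamma_0}(\delta^\phi(\Gamma_0))=+\infty$. Once both are in hand, Theorem~\ref{thm: entropy drop} gives $\delta^\phi(\Gamma_0)<\delta^\phi(\Gamma)$ immediately.

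For (ii): the hypothesis that $\Gamma$ is $\Psf_\theta$-Anosov together with $\Gamma_0$ being a quasiconvex (in particular finitely generated) subgroup implies, by \cite[Lem.~2.3]{CLSS} cited just before the statement, that $\Gamma_0$ is itself $\Psf_\theta$-Anosov. Then the corollary immediately preceding Theorem~\ref{thm: entropy drop} (the divergence-at-the-critical-exponent corollary, \cite[Prop.~8.1]{CZZ3}) applies to $\Gamma_0$: for a $\Psf_\theta$-Anosov group one has $P^\phi_{\Gamma_0}(\delta^\phi(\Gamma_0))=+\infty$. Note that $\delta^\phi(\Gamma_0)\le\delta^\phi(\Gamma)<+\infty$ since $\Gamma_0\subset\Gamma$, so this critical exponent is finite and the statement makes sense. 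This disposes of hypothesis (ii) cleanly.

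For (i): I would argue that if $\Lambda_\theta(\Gamma_0)=\Lambda_\theta(\Gamma)$, then $\Gamma_0$ has finite index in $\Gamma$, contradicting the infinite-index assumption. Since $\Gamma$ is $\Psf_\theta$-Anosov, it is a hyperbolic group with Gromov boundary $\bg=\Lambda_\theta(\Gamma)$ (by Bowditch \cite{Bowditch99} and Tukia \cite{tukia-conical}, as noted in the text), and likewise $\Gamma_0$ is hyperbolic with $\partial_\infty\Gamma_0=\Lambda_\theta(\Gamma_0)$; moreover $\Gamma_0$ being quasiconvex means the inclusion $\Gamma_0\hookrightarrow\Gamma$ is a quasi-isometric embedding, so the induced boundary map $\partial_\infty\Gamma_0\to\bg$ is a topological embedding onto $\Lambda_\theta(\Gamma_0)$. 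If $\Lambda_\theta(\Gamma_0)=\bg$, then the limit set of the infinite-index subgroup $\Gamma_0$ is all of $\bg$; but a quasiconvex subgroup whose limit set is the whole boundary of a non-elementary hyperbolic group must have finite index (a standard fact: the limit set of an infinite-index quasiconvex subgroup of a hyperbolic group is a proper closed subset, since otherwise one contradicts, e.g., the growth comparison or the fact that $\Gamma/\Gamma_0$ infinite forces $\Gamma$-translates of a Cayley-convex hull of $\Gamma_0$ to escape). This contradicts infinite index, so $\Lambda_\theta(\Gamma_0)\subsetneq\Lambda_\theta(\Gamma)$.

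The main obstacle is the properness step (i): one must be careful that ``infinite index'' genuinely yields ``proper limit set'' in this convergence-group/Gromov-boundary setting. The cleanest route is to invoke the hyperbolic-group structure from Bowditch--Tukia and the standard dichotomy for quasiconvex subgroups of hyperbolic groups (finite index versus proper limit set); one could alternatively give a direct argument using Theorem~\ref{thm: entropy drop} itself in contrapositive form, or using the fact that $\Gamma$ acts on $\Lambda_\theta(\Gamma)$ minimally (Proposition~\ref{prop: convergence group}) so that $\Gamma_0$-invariance of a full limit set would force $\Gamma_0$ to be ``large'' — but the hyperbolic-group argument is the most transparent. Everything else is a direct chaining of already-cited results.
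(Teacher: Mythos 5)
Your proposal is correct and follows essentially the same route the paper intends: it chains the fact that a quasiconvex subgroup of a $\Psf_\theta$-Anosov group is itself $\Psf_\theta$-Anosov (hence its Poincar\'e series diverges at its critical exponent by the shadow-lemma corollary) with the standard fact that an infinite-index quasiconvex subgroup of a hyperbolic group has a proper limit set, and then applies Theorem~\ref{thm: entropy drop}. No gaps.
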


A crucial technical result in  \cite{CZZ4} guarantees that if $\Gamma$ is $P_\theta$-relatively Anosov, $z\in\Lambda(\Gamma)$ is a bounded parabolic fixed point,
$H$ is the stabilizer of $z$ in $\Gamma$, and $\phi\in\mathfrak{a}_\theta^*$ satisfies $\delta^\phi(\Gamma)<+\infty$, then
$P_H^\phi(\delta^\phi(H))=+\infty$. This allows us to apply Theorem \ref{thm: entropy drop} to deduce that
$\delta^\phi(H)<\delta^\phi(\Gamma)$. One may then, as in the hyperbolic setting, examine the construction of a Patterson--Sullivan
measure $\mu$  in Theorem \ref{trans:psexists} to show that $\mu(\{z\})=0$. Therefore, we see that:

\begin{theorem}[{\cite{CZZ4}}]\label{thm: rel Anosov divergent}
If $\Gamma\subset\mathsf{SL}(d,\mathbb R)$ is $P_\theta$-relatively Anosov and $\phi\in\mathfrak{a}_\theta^*$ satisfies $\delta^\phi(\Gamma)<+\infty$, then $P_\Gamma^\phi(\delta^\phi(\Gamma))=+\infty$.
\end{theorem}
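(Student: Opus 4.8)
The plan is to follow the template used for geometrically finite Fuchsian groups: produce a $\phi$-Patterson--Sullivan measure, show it charges no bounded parabolic fixed point, deduce that it is supported on the conical limit set, and conclude via the contrapositive of \cite[Prop.~8.1]{CZZ3} (the Corollary following Proposition~\ref{prop:shadow estimates}). Concretely, set $\delta:=\delta^\phi(\Gamma)$ and assume for contradiction that $P_\Gamma^\phi(\delta)<+\infty$. Using Theorem~\ref{trans:psexists} --- and, since $P_\Gamma^\phi(\delta)<+\infty$, the Patterson modification of the defining series by a slowly growing function $h$ as in Patterson's Lemma --- we obtain a $\phi$-Patterson--Sullivan measure $\mu$ realised as a weak limit $\mu=\lim_n\mu_{s_n}$ of normalised (modified) orbital sums, with $s_n\downarrow\delta$ and denominator $\widehat P^\phi_\Gamma(s_n)\to+\infty$.

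First I would analyse a single bounded parabolic fixed point $z\in\Lambda_\theta(\Gamma)$, with stabiliser $H=\mathrm{Stab}_\Gamma(z)$. By the crucial technical result of \cite{CZZ4} quoted above, $P_H^\phi(\delta^\phi(H))=+\infty$; moreover $H$ is an elementary subgroup, so its limit set $\Lambda_\theta(H)=\{z\}$ is a proper subset of the infinite set $\Lambda_\theta(\Gamma)$ ($\Gamma$ being non-elementary). Hence Theorem~\ref{thm: entropy drop} applies --- its hypothesis $\delta^\phi(\Gamma)<+\infty$ being exactly what we assume --- and yields the strict inequality $\delta^\phi(H)<\delta$. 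The next, and most delicate, step is to exploit this entropy gap to show $\mu(\{z\})=0$. Here I would argue as Dal'bo--Otal--Peign\'e: since $z$ is a \emph{bounded} parabolic point, $H$ acts cocompactly on $\Lambda_\theta(\Gamma)\setminus\{z\}$, and combining this with the convergence dynamics of $\Gamma$ on $\Lambda_\theta(\Gamma)$ and the cocycle identity for $\phi\circ\kappa_\theta$, one shows that for a sufficiently small neighbourhood $V$ of $z$ in $\Gamma\sqcup\Lambda_\theta(\Gamma)$ the portion of the (modified) $\phi$-Poincar\'e series of $\Gamma$ carried by $\{\gamma: U_\theta(\gamma)\in V\}$ is bounded above, uniformly for $s$ in a neighbourhood of $\delta$, by a constant multiple of the (modified) $\phi$-Poincar\'e series of $H$. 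Because $\delta^\phi(H)<\delta$ and $h$ grows sub-exponentially, the latter series converges at $s=\delta$ and stays bounded as $s\downarrow\delta$; dividing by $\widehat P^\phi_\Gamma(s_n)\to+\infty$ gives $\limsup_n\mu_{s_n}(V)=0$, and shrinking $V$ yields $\mu(\{z\})=0$.

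Since $\Gamma$ is $\Psf_\theta$-relatively Anosov, every point of $\Lambda_\theta(\Gamma)$ is either conical or a bounded parabolic fixed point, and the relative hyperbolicity of $\Gamma$ (with $\Lambda_\theta(\Gamma)$ its Bowditch boundary) forces there to be only finitely many $\Gamma$-orbits of bounded parabolic fixed points, hence at most countably many such points in total. By the previous paragraph $\mu$ assigns each of them zero mass, so $\mu(\Lambda^{\mathrm{con}}_\theta(\Gamma))=1$. But \cite[Prop.~8.1]{CZZ3} asserts that if $P_\Gamma^\phi(\delta)<+\infty$ then $\mu(\Lambda^{\mathrm{con}}_\theta(\Gamma))=0$, a contradiction. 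Therefore $P_\Gamma^\phi(\delta^\phi(\Gamma))=+\infty$.

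I expect the genuine obstacle to be the uniform domination estimate in the second paragraph: transplanting the Dal'bo--Otal--Peign\'e horoball argument to the transverse setting requires a quasi-additivity property of $\gamma\mapsto\phi(\kappa_\theta(\gamma))$ along ``cusp directions'' --- control of $\phi(\kappa_\theta(h\eta))$ in terms of $\phi(\kappa_\theta(h))$ and $\phi(\kappa_\theta(\eta))$ for $h\in H$ and $\eta$ in a bounded set --- which is precisely where the relatively Anosov hypothesis and the structural results on parabolic subgroups from \cite{CZZ4} (the same ones giving $P_H^\phi(\delta^\phi(H))=+\infty$) do the real work. Everything else is bookkeeping that faithfully mirrors the Fuchsian model recalled earlier in the paper.
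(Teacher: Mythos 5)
Your proposal is correct and follows essentially the same route as the paper: invoke the technical result from \cite{CZZ4} that $P_H^\phi(\delta^\phi(H))=+\infty$ for each bounded parabolic stabilizer $H$, apply Theorem~\ref{thm: entropy drop} to get $\delta^\phi(H)<\delta^\phi(\Gamma)$, examine Patterson's construction to conclude $\mu(\{z\})=0$ at each of the countably many parabolic points, and contradict the shadow-lemma consequence $\mu(\Lambda_\theta^{\mathrm{con}}(\Gamma))=0$ that would follow from convergence of the Poincar\'e series. You also correctly locate the genuinely delicate step (the Dal'bo--Otal--Peign\'e-style domination of the partial Poincar\'e series near a cusp by that of $H$), which the paper likewise defers to \cite{CZZ4}.
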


Sambarino \cite[Cor. 5.7.2]{sambarino-dichotomy} proved Theorem \ref{thm: rel Anosov divergent} for Anosov groups (certain special cases were previously established by Lee--Oh \cite[Lem. 7.11]{LeeOh} and by Dey--Kapovich \cite[Thm. A]{DK1}).

Glorieux--Monclair--Tholozan \cite[Cor. 1.2]{GMT} (see also Pozzetti--Sambarino--Weinhard \cite[Prop. 4.1]{PSW}) proved that  if $\Gamma$ is also $P_{1,d-1}$-Anosov, 
then $\delta^{\alpha_1}(\Gamma)$ provides an upper bound for the Hausdorff dimension of $\Lambda_{1,d-1}(\Gamma)$. As a consequence of our shadow lemma, we extend this result to the setting of $P_{k,d-k}$-transverse groups. 

\begin{proposition}[{\cite[Cor 5.2]{CZZ2}}] If $\Gamma\subset\mathsf{SL}(d,\mathbb R)$ is $P_{k,d-k}$-transverse, then 
the Hausdorff dimension of  the conical limit set $\Lambda_{k,d-k}^{con}(\Gamma)$ is at most $\delta^{\alpha_k}(\Gamma)$.
\end{proposition}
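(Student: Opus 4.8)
The plan is to prove this by a Bishop--Jones style covering argument: cover the conical limit set by shadows whose diameters decay at the rate $\alpha_k(\kappa_\theta(\cdot))$, and then bound the resulting Hausdorff sums by the $\alpha_k$-Poincar\'e series. Write $\theta=\{k,d-k\}$. If $\delta^{\alpha_k}(\Gamma)=+\infty$ there is nothing to prove, so assume $\delta^{\alpha_k}(\Gamma)<+\infty$; then for every $s>\delta^{\alpha_k}(\Gamma)$ the series $P_\Gamma^{\alpha_k}(s)=\sum_{\gamma\in\Gamma}e^{-s\alpha_k(\kappa_\theta(\gamma))}$ converges, and by Theorem \ref{trans:psexists} the set $\Lambda_\theta(\Gamma)$ carries a $\phi$-Patterson--Sullivan measure for $\phi=\alpha_k$, although only the convergence of the Poincar\'e series will be used. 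Let $\Gamma_0\subset\mathrm{Aut}(\Omega)$, $\rho\colon\Gamma_0\to\Gamma$ and $\xi\colon\Lambda_\Omega(\Gamma_0)\to\Lambda_\theta(\Gamma)$ be as in Theorem \ref{projvisible}, fix a basepoint $b_0\in\Omega$ and a metric on the compact manifold $\mathcal F_\theta$, and for $\gamma\in\Gamma_0$, $r>0$ put
$$V_r(\gamma):=\xi\big(\Oc_r(b_0,\gamma(b_0))\cap\Lambda_\Omega(\Gamma_0)\big)\subseteq\Lambda_\theta(\Gamma).$$

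First I would establish the covering statement. Using the characterization of conical limit points via shadows in the Hilbert metric on $\Omega$ (which accompanies the analysis behind Theorem \ref{projvisible}), each $z\in\Lambda^{con}_{k,d-k}(\Gamma)$ is $m$-conical for some $m\in\mathbb N$: there is an infinite sequence of distinct elements $\gamma_n\in\Gamma_0$ with $\xi^{-1}(z)\in\Oc_m(b_0,\gamma_n(b_0))$ for all $n$ and $d_\Omega(b_0,\gamma_n(b_0))\to\infty$. Since $\Gamma=\rho(\Gamma_0)$ is $P_{k,d-k}$-divergent and $\alpha_k\circ\kappa_\theta\geq\alpha_k\circ\kappa$ pointwise (as $\kappa$ takes values in $\mathfrak a^+$), the infinite sequence $\{\rho(\gamma_n)\}$ satisfies $\alpha_k(\kappa_\theta(\rho(\gamma_n)))\to\infty$. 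Hence, writing $C_m$ for the set of $m$-conical points and noting $C_m\subseteq C_{m'}$ for $m\leq m'$, we have $\Lambda^{con}_{k,d-k}(\Gamma)=\bigcup_{m\in\mathbb N}C_m$, and for each $m$ and each $T>0$,
$$C_m\ \subseteq\ \bigcup\big\{\,V_m(\gamma)\ :\ \gamma\in\Gamma_0,\ \alpha_k(\kappa_\theta(\rho(\gamma)))\geq T\,\big\}.$$

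The heart of the matter --- and the step I expect to be the main obstacle --- is the shadow diameter estimate: there is $R_0>0$ such that for every $r\geq R_0$ there is $C=C(r)>0$ with
$$\operatorname{diam}\,V_r(\gamma)\ \leq\ C\,e^{-\alpha_k(\kappa_\theta(\rho(\gamma)))}$$
for all $\gamma\in\Gamma_0$ with $\alpha_k(\kappa_\theta(\rho(\gamma)))$ large enough. This is the geometric counterpart of the measure estimate in Proposition \ref{prop:shadow estimates}. To prove it, one uses $\rho$-equivariance of $\xi$ together with $\Oc_r(b_0,\gamma(b_0))=\gamma\big(\Oc_r(\gamma^{-1}(b_0),b_0)\big)$ to write $V_r(\gamma)=\rho(\gamma)\big(\xi(\Oc_r(\gamma^{-1}(b_0),b_0)\cap\Lambda_\Omega(\Gamma_0))\big)$, and then combines (i) the contraction of $\rho(\gamma)$ acting on $\mathcal F_\theta$ when $\alpha_k(\kappa_\theta(\rho(\gamma)))$ is large (flags in a fixed transverse neighbourhood are sent into a ball about $U_\theta(\rho(\gamma))$ of radius controlled by $e^{-\alpha_k(\kappa_\theta(\rho(\gamma)))}$) with (ii) the fact that, by projective visibility of $\Gamma_0$ --- the $C^1$-smoothness of the points of $\Lambda_\Omega(\Gamma_0)$ and the ensuing quantitative transversality of the flags $\xi(w)$, $w\in\Lambda_\Omega(\Gamma_0)$ --- the set $\xi(\Oc_r(\gamma^{-1}(b_0),b_0)\cap\Lambda_\Omega(\Gamma_0))$ stays transverse enough to $U_\theta(\rho(\gamma)^{-1})$ for the contraction estimate to apply with all losses absorbed into $C(r)$. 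The delicate point is to track the quantitative transversality finely enough that the exponential rate in the final bound is exactly $\alpha_k(\kappa_\theta(\rho(\gamma)))$, and not a slower linear functional; this is precisely where the Hilbert geometry of the visible model from Theorem \ref{projvisible} must be exploited carefully, and is the crux carried out in \cite[Cor.~5.2]{CZZ2}.

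Granting the diameter estimate, the conclusion follows by a routine covering computation. Fix $s>\delta^{\alpha_k}(\Gamma)$, so that $P_\Gamma^{\alpha_k}(s)=\sum_{\gamma\in\Gamma_0}e^{-s\alpha_k(\kappa_\theta(\rho(\gamma)))}<+\infty$ (using that $\rho$ is a bijection onto $\Gamma$). Fix $m\geq R_0$. For $T$ large the cover of $C_m$ above consists of sets of diameter at most $C(m)e^{-T}$, and
$$\sum_{\gamma\in\Gamma_0,\ \alpha_k(\kappa_\theta(\rho(\gamma)))\geq T}\big(\operatorname{diam}\,V_m(\gamma)\big)^s\ \leq\ C(m)^s\sum_{\gamma\in\Gamma_0}e^{-s\alpha_k(\kappa_\theta(\rho(\gamma)))}\ =\ C(m)^s\,P_\Gamma^{\alpha_k}(s),$$
a bound independent of $T$. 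Letting $T\to\infty$ gives $\mathcal H^s(C_m)\leq C(m)^sP_\Gamma^{\alpha_k}(s)<+\infty$, so $\dim_H C_m\leq s$; hence $\dim_H\Lambda^{con}_{k,d-k}(\Gamma)=\sup_{m}\dim_H C_m\leq s$, and letting $s\downarrow\delta^{\alpha_k}(\Gamma)$ completes the argument.
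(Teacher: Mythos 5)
Your overall strategy --- cover $\Lambda^{con}_{k,d-k}(\Gamma)$ by shadows in the projectively visible model of Theorem \ref{projvisible}, bound their diameters exponentially, and sum against the Poincar\'e series --- is the Bishop--Jones argument that the cited corollary runs, and your covering bookkeeping (the exhaustion by $m$-conical sets, the tail estimate, countable stability of Hausdorff dimension) is fine. The problem is the linear functional in your key diameter estimate. You claim $\operatorname{diam} V_r(\gamma)\le Ce^{-\alpha_k(\kappa_\theta(\rho(\gamma)))}$, and you correctly note that $\alpha_k\circ\kappa_\theta\ge\alpha_k\circ\kappa$ on $\mathfrak a^+$; but that inequality means your claimed bound is \emph{stronger} than what the contraction argument yields, and it is false in general. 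The contraction rate of $\rho(\gamma)$ on the Grassmannian near $U_k(\rho(\gamma))$ is $\sigma_{k+1}(\rho(\gamma))/\sigma_k(\rho(\gamma))=e^{-\alpha_k(\kappa(\rho(\gamma)))}$, and this is sharp. Concretely, let $\Gamma=\mathrm{Sym}^3(\Gamma')\subset\mathsf{SL}(4,\mathbb R)$ for a cocompact Fuchsian group $\Gamma'$, viewed as $P_{1,3}$-transverse: here $\kappa(\gamma)=(3s,s,-s,-3s)$ with $s=\tfrac12 d_{\mathbb H^2}(i,\gamma(i))$, a shadow is an arc of the smooth Veronese curve of diameter comparable to $e^{-2s}=e^{-\alpha_1(\kappa(\gamma))}$, whereas $e^{-\alpha_1(\kappa_\theta(\gamma))}=e^{-3s}$ is much smaller. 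Worse, with your normalization the \emph{conclusion} fails for this example: the critical exponent of $\sum_\gamma e^{-t\,\alpha_1(\kappa_\theta(\gamma))}=\sum_\gamma e^{-\frac{3t}{2}d(i,\gamma(i))}$ is $2/3$, while the conical limit set is the whole Veronese circle, of Hausdorff dimension $1$. So $\delta^{\alpha_k}$ in the statement must be the critical exponent of $\sum_\gamma\big(\sigma_{k+1}(\gamma)/\sigma_k(\gamma)\big)^t$, i.e.\ of $\alpha_k\circ\kappa$, and your whole argument should be run with $\alpha_k(\kappa(\rho(\gamma)))$ in place of $\alpha_k(\kappa_\theta(\rho(\gamma)))$ --- in the index of the cover, in the diameter estimate, and in the final summation. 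With that substitution the computation goes through verbatim.

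A secondary point: even with the correct functional, the diameter estimate is the entire content of the corollary being proved, and you assert it (citing the result itself) rather than establish it. Your outline --- equivariance, contraction of $\rho(\gamma)$ on flags transverse to $U_\theta(\rho(\gamma)^{-1})$, and quantitative transversality of $\xi\big(\mathcal O_r(\gamma^{-1}(b_0),b_0)\cap\Lambda_\Omega(\Gamma_0)\big)$ supplied by the $C^1$-smoothness in the visible model --- is the right one, but as written the proposal defers the only nontrivial step.
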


It is not true in general, even for $\Psf_{1,d-1}$-Anosov groups, that the Hausdorff dimension of the limit set is given by $\delta^{\alpha_1}$. (Glorieux--Monclair--Tholozan \cite{GMT} show that $\delta^{\omega_H}(\Gamma)$ is a sharp lower bound, where $\omega_H(\vec a)=a_1-a_d)$.
However,
Pozzetti--Sambarino--Wienhard \cite{PSW} introduced a condition guaranteeing a form of equality in the Anosov case.
We say that $\Gamma\subset\mathsf{SL}(d,\mathbb R)$ is \emph{$(1,1,2)$-hypertransverse} if $\Gamma$ is $P_\theta$-transverse, $1,2\in\theta$ and whenever
$F,G,H\in\Lambda_\theta(\Gamma)$ are distinct partial flags, then
$$F^1\oplus G^1\oplus H^{d-2}=\mathbb R^d.$$
Pozzetti--Sambarino--Wienhard \cite[Cor 6.9]{PSW}  showed that if $\Gamma$ is $P_\theta$-Anosov with $1,2\in\theta$ and $(1,1,2)$-hypertransverse,
then  $\delta^{\alpha_1}(\Gamma)$ is the Hausdorff dimension of $\Lambda_{1,d-1}(\Gamma)$. A key step in the proof is an estimate of the inradius (with respect to some Riemannian metric on $\Pb(\Rb^d)$) of the size of shadows in $\Lambda_\theta(\Gamma)$. Combining a generalization of this estimate to the setting of transverse groups with the construction of Bishop--Jones \cite{bishop-jones} we obtained a generalization to
the transverse setting.

\begin{theorem}[{\cite[Thm 8.1]{CZZ2}}]\label{112}
 If $\Gamma\subset\mathsf{SL}(d,\mathbb R)$ is $(1,1,2)$-hypertransverse, 
then $\delta^{\alpha_1}(\Gamma)$ is the Hausdorff dimension of $\Lambda_{1,d-1}^{con}(\Gamma)$.
\end{theorem}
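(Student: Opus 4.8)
The plan is to prove the two inequalities $\dim_H \Lambda^{con}_{1,d-1}(\Gamma) \le \delta^{\alpha_1}(\Gamma)$ and $\dim_H \Lambda^{con}_{1,d-1}(\Gamma) \ge \delta^{\alpha_1}(\Gamma)$ separately, with the second being the substantive new content. The upper bound is already available: it is precisely the assertion of the preceding proposition (\cite[Cor 5.2]{CZZ2}), which applies since a $(1,1,2)$-hypertransverse group is in particular $P_{1,d-1}$-transverse. So the work is in the lower bound, and the strategy is to adapt the Bishop--Jones construction \cite{bishop-jones}: produce, for every $s < \delta^{\alpha_1}(\Gamma)$, a subset $K \subset \Lambda^{con}_{1,d-1}(\Gamma)$ carrying a Frostman measure of exponent $s$, so that $\dim_H K \ge s$ by the mass distribution principle.

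First I would set up the ``tree of shadows'' underlying the Bishop--Jones argument. Since $s < \delta^{\alpha_1}(\Gamma) \le \delta^\phi(\Gamma)$ with $\phi = \alpha_1$ restricted appropriately to $\mathfrak a_\theta$ (note $\alpha_1$ need not vanish on $\ker$ of the other roots, but on $\mathfrak a_{1,d-1}$ it does, so $\alpha_1 \circ \kappa_\theta = \alpha_1 \circ \kappa$ for $\theta \ni 1$), the Poincar\'e series $P_\Gamma^{\alpha_1}(s)$ diverges. Using divergence, I would inductively select a large antichain of group elements $\{\gamma_i\}$ whose shadows $\xi(\mathcal O_r(b_0, \gamma_i(b_0)) \cap \Lambda_\Omega(\Gamma_0))$ are pairwise disjoint (or boundedly overlapping) and whose sizes, controlled by Proposition \ref{prop:shadow estimates}, are comparable to $e^{-\delta^\phi(\Gamma)\alpha_1(\kappa(\gamma_i))}$ in $\mu$-measure. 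Iterating the construction inside each shadow produces a Cantor-like set $K$; distributing unit mass according to the $\mu$-masses (or a renormalized weighting tuned to the exponent $s$) gives a measure $\nu$ on $K$. Because each point of $K$ is, by construction, visited by infinitely many of these shrinking shadows along a nested sequence, $K \subset \Lambda^{con}_{1,d-1}(\Gamma)$.

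The crux, and the main obstacle, is converting the Shadow Lemma estimate — which controls the $\mu$-\emph{measure} of a shadow — into a control on the \emph{Euclidean diameter} (with respect to a Riemannian metric on $\Pb(\Rb^d)$) of that shadow inside the flag manifold, so that the Frostman condition $\nu(B(x,\rho)) \lesssim \rho^s$ can be checked against genuine metric balls rather than shadows. This is exactly where $(1,1,2)$-hypertransversality enters: following the inradius estimate of Pozzetti--Sambarino--Wienhard \cite[Cor 6.9]{PSW}, the hypertransversality hypothesis forces the shadow from $b_0$ of a ball about $\gamma(b_0)$ to contain a metric ball of radius comparable to $e^{-\alpha_1(\kappa(\gamma))}$ (up to uniform multiplicative constants), and more importantly to be \emph{comparable} to such a ball from above as well, so that shadows behave like round balls of the correct scale. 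I would prove the requisite version of this two-sided estimate in the transverse (not necessarily Anosov) setting, using the $C^1$-smoothness of limit points guaranteed by projective visibility (Theorem \ref{projvisible}) and the transversality $F^1 \oplus G^1 \oplus H^{d-2} = \Rb^d$ to bound the ``angles'' between the flag directions involved. Once shadows are known to be round of radius $\asymp e^{-\alpha_1(\kappa(\gamma))}$, the exponent $\delta^\phi(\Gamma) = \delta^{\alpha_1}(\Gamma)$ appearing in the Shadow Lemma directly matches the metric scale, the Frostman estimate $\nu(B(x,\rho)) \lesssim \rho^{s}$ follows for every $s < \delta^{\alpha_1}(\Gamma)$, and letting $s \uparrow \delta^{\alpha_1}(\Gamma)$ completes the lower bound.

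A few technical points I would attend to along the way: passing between $\Gamma$ and the projectively visible model $\Gamma_0$ via the homeomorphism $\xi$ of Theorem \ref{projvisible}, noting that $\xi$ is bi-H\"older only under extra hypotheses so the diameter estimates must be carried out on the $\Lambda_\theta(\Gamma)$ side using the intrinsic metric on $\Fc_{1,d-1}$ (this is why the PSW-type inradius/outradius estimate, rather than a naive transfer through $\xi$, is essential); verifying that the constructed nested shadows genuinely shrink to points (a consequence of visibility plus $\alpha_1(\kappa(\gamma_i)) \to \infty$ along the tree, which holds since $\Gamma$ is $P_1$-divergent); and handling the uniform overlap of shadows at a given generation, which the convergence group property (Proposition \ref{prop: convergence group}) together with a bounded-multiplicity argument controls. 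None of these is expected to be serious; the genuine difficulty is localized in the round-shadow estimate and its interaction with the Bishop--Jones mass distribution.
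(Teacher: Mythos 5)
Your proposal matches the paper's approach: the upper bound is exactly the preceding proposition (\cite[Cor 5.2]{CZZ2}), and the lower bound is obtained, as the paper indicates, by generalizing the Pozzetti--Sambarino--Wienhard inradius estimate for shadows from the Anosov to the transverse setting and feeding it into the Bishop--Jones tree-of-shadows construction with a Frostman measure tuned to the exponent $s<\delta^{\alpha_1}(\Gamma)$. You have also correctly located the genuine difficulty (the two-sided metric comparison of shadows with round balls of radius $\asymp e^{-\alpha_1(\kappa(\gamma))}$, carried out intrinsically on $\Fc_{1,d-1}$ rather than transferred through $\xi$), so no further comparison is needed.
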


An important class of examples of $(1,1,2)$-hypertransverse subgroups are the images of \emph{cusped Hitchin representations}. These are the representations from a geometrically finite, infinite, discrete subgroup $\Gamma_0\subset\PSL(2,\Rb)$ to $\SL(d,\Rb)$ that admit an equivariant, continuous, positive map from $\Lambda(\Gamma_0)$ to $\Fc_{1,\dots,d-1}$. This notion of a positive map was introduced by Fock--Goncharov~\cite{FG}, and can be thought of as a ``strong monotonicity" condition on the map, see \cite[Section 2.3]{CZZ2} for the precise definitions. In the case when $d=2$, these cusped Hitchin representations are the holonomies of geometrically finite hyperbolic surfaces. One can prove \cite[Theorem 1.4]{CZZ1} that cusped Hitchin representations have finite kernel, and the images of cusped Hitchin representations are $\Psf_{1,\dots,d-1}$-relative Anosov subgroups whose limit set is the image of $\Lambda(\Gamma_0)$ under the positive map. 
Using Theorems \ref{thm: entropy drop} and \ref{112}, we prove the following generalization of a result of Potrie--Sambarino \cite{potrie-sambarino}.

\begin{theorem}[{\cite{CZZ2}}]\label{entropy rigidity}
Let $\rho:\Gamma_0\to\SL(d,\Rb)$ be a cusped Hitchin representation, and let $\Gamma:=\rho(\Gamma_0)$. Then for all $k\in\{1,\dots,d-1\}$, $\delta^{\alpha_k}(\Gamma)\le 1$ and equality holds if and only if $\Gamma_0$ is a lattice in $\PSL(2,\Rb)$.
\end{theorem}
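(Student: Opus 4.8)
\textbf{Proof proposal for Theorem \ref{entropy rigidity}.}

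The plan is to combine the Hausdorff dimension computation of Theorem \ref{112} with the entropy drop result of Theorem \ref{thm: entropy drop}, using the known structure of $\Gamma_0$ as a geometrically finite Fuchsian group. First I would check that a cusped Hitchin representation $\rho$ produces a group $\Gamma=\rho(\Gamma_0)$ that is $(1,1,2)$-hypertransverse and $\Psf_{1,\dots,d-1}$-relatively Anosov: this follows from the positivity of the limit map (one uses \cite[Theorem 1.4]{CZZ1} and the structure of positive maps to verify that for distinct flags $F,G,H$ in the limit set one has $F^1\oplus G^1\oplus H^{d-2}=\Rb^d$, since positivity forces transversality of all the relevant subspaces). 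In particular $\delta^{\alpha_k}(\Gamma)<+\infty$ for all $k$, so Theorem \ref{thm: entropy drop} is applicable.

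Next I would establish the inequality $\delta^{\alpha_k}(\Gamma)\le 1$ for all $k$. For $k=1$, Theorem \ref{112} identifies $\delta^{\alpha_1}(\Gamma)$ with the Hausdorff dimension of $\Lambda^{\rm con}_{1,d-1}(\Gamma)$; since the limit map $\xi\colon\Lambda(\Gamma_0)\to\Fc_{1,\dots,d-1}$ is a homeomorphism onto the limit set carrying conical points to conical points, and $\Lambda(\Gamma_0)\subset\partial\Hb^2\cong S^1$ has Hausdorff dimension at most $1$, I would argue that $\xi$ does not increase Hausdorff dimension of the conical part — here one can invoke that $\xi$ restricted to the conical limit set is a ``quasi-symmetric-like'' map controlled by the shadow estimates (Proposition \ref{prop:shadow estimates}), so that $\dim_H \Lambda^{\rm con}_{1,d-1}(\Gamma)\le \dim_H\Lambda^{\rm con}(\Gamma_0)\le 1$. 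For general $k$, I would reduce to the $k=1$ case: there is a symmetric-exterior-power representation relating $\alpha_k$-functionals of $\Gamma$ to $\alpha_1$-functionals of a Hitchin-type representation in a larger $\SL(d',\Rb)$ (equivalently, one uses the known relations among the $\delta^{\alpha_k}$ for Hitchin representations, e.g. concavity/monotonicity of the entropy along the functionals, as in Potrie--Sambarino \cite{potrie-sambarino}), which yields $\delta^{\alpha_k}(\Gamma)\le 1$ as well.

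For the rigidity statement, suppose $\delta^{\alpha_k}(\Gamma)=1$ for some $k$. If $\Gamma_0$ is \emph{not} a lattice in $\PSL(2,\Rb)$, then $X_{\Gamma_0}$ is a geometrically finite hyperbolic surface of infinite area, so $\Lambda(\Gamma_0)$ is a proper subset of $\partial\Hb^2$ with $\delta(\Gamma_0)<1$. I would pass to a lattice $\Lambda_0\subset\PSL(2,\Rb)$ containing $\Gamma_0$ as an infinite-index geometrically finite subgroup (or, if no containing lattice exists abstractly, instead compare with the full Hitchin component directly), so that the corresponding cusped Hitchin representation of $\Lambda_0$ is a genuine (possibly cusped) Hitchin representation of a lattice; then Theorem \ref{thm: entropy drop}, together with the divergence-at-the-critical-exponent statement Theorem \ref{thm: rel Anosov divergent} (which guarantees $P^{\alpha_k}_{\Gamma_0\text{-image}}(\delta^{\alpha_k}(\cdot))=+\infty$ and the properness of $\Lambda_\theta$-inclusion), forces $\delta^{\alpha_k}(\Gamma)<\delta^{\alpha_k}(\rho(\Lambda_0))$. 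Since a cusped Hitchin representation of a lattice has $\delta^{\alpha_k}\le 1$ by the inequality already proved, we would get $\delta^{\alpha_k}(\Gamma)<1$, a contradiction. Conversely, when $\Gamma_0$ is a lattice one checks $\delta^{\alpha_k}(\Gamma)=1$ by an explicit computation in the Fuchsian locus (the symmetric power of the uniformizing representation), where $\delta^{\alpha_k}$ is computed directly and equals $1$ by Huber's theorem (Theorem \ref{thm:Huber}) applied to the underlying closed or finite-area surface.

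The main obstacle I expect is the dimension-non-increasing property of the boundary map $\xi$ on the conical limit set, i.e.\ the inequality $\dim_H\Lambda^{\rm con}_{1,d-1}(\Gamma)\le\dim_H\Lambda^{\rm con}(\Gamma_0)$; in the cusped setting one cannot simply cite smoothness of $\xi$, and one must instead control the distortion of $\xi$ near parabolic points using the relatively Anosov structure and the shadow estimates of Proposition \ref{prop:shadow estimates}. A secondary subtlety is the reduction from general $k$ to $k=1$, which requires care because the exterior power construction changes the ambient dimension and one must track how the $(1,1,2)$-hypertransversality and the relevant functional behave under it.
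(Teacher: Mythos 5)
Your overall architecture --- the upper bound via Theorem \ref{112}, strict inequality in the non-lattice case via the doubling construction plus Theorem \ref{thm: entropy drop} and Theorem \ref{thm: rel Anosov divergent}, and the reduction from general $k$ to $k=1$ via exterior powers --- is the same as the paper's. But the mechanism you propose for the central inequality $\delta^{\alpha_1}(\Gamma)\le 1$ does not work. You want to show that $\xi\colon\Lambda(\Gamma_0)\to\Fc_{1,\dots,d-1}$ does not increase the Hausdorff dimension of the conical limit set, and you propose to control its distortion by the shadow estimates of Proposition \ref{prop:shadow estimates}. Those estimates bound the Patterson--Sullivan \emph{measure} of a shadow by $e^{-\delta^{\alpha_1}(\Gamma)\alpha_1(\kappa(\gamma))}$; they give no comparison between the diameter of a shadow in $\Lambda_{1,d-1}(\Gamma)$ and the diameter of the corresponding interval in $\Lambda(\Gamma_0)\subset S^1$, which is what a dimension comparison requires. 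Worse, the statement you are after is false for general transverse or Anosov surface groups: limit sets of Anosov (non-Hitchin) surface-group representations in higher rank can have Hausdorff dimension strictly greater than $1$ (this is exactly the phenomenon behind the discussion of \cite{GMT} and \cite{LPX} in the paper), so no argument using only the relatively Anosov structure can succeed. The ingredient the paper uses, and which you are missing, is \emph{positivity}: the monotonicity properties of Fock--Goncharov positive maps force the image of any positive map --- hence $\Lambda_{1,\dots,d-1}(\Gamma)$ and the limit sets of the exterior-power representations $\bigwedge^k\rho$ --- to have Hausdorff dimension at most $1$, much as the graph of a monotone function has dimension $1$. Combined with Theorem \ref{112} applied to the $(1,1,2)$-hypertransverse groups $\bigwedge^k\rho(\Gamma_0)$, this gives $\delta^{\alpha_k}(\Gamma)=\delta^{\alpha_1}(\bigwedge^k\rho(\Gamma_0))\le 1$. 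You correctly identify this as your ``main obstacle,'' but you do not resolve it, and the resolution is the heart of the theorem.

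There is a second gap in the equality case. You verify $\delta^{\alpha_k}(\Gamma)=1$ for lattices only at the Fuchsian locus, by computing with the symmetric power of the uniformizing representation and Huber's theorem; but the statement asserts equality for \emph{every} cusped Hitchin representation of a lattice, and nothing in your argument transports the computation off the Fuchsian locus. The paper's argument avoids this: when $\Gamma_0$ is a lattice, $\Lambda_{1,\dots,d-1}(\Gamma)$ is a continuous embedded circle, hence has Hausdorff dimension at least $1$; since the non-conical points form a countable set, Theorem \ref{112} (again applied to $\bigwedge^k\rho$) yields $\delta^{\alpha_k}(\Gamma)\ge 1$, and the upper bound gives equality. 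Your treatment of the non-lattice direction (doubling $\Gamma_0$ to a lattice $\Gamma_0^D$, extending $\rho$ to a cusped Hitchin representation $\rho^D$ with $\Lambda(\Gamma_0)\subsetneq\Lambda(\Gamma_0^D)$, and invoking the entropy drop) is correct and matches the paper; the alternative you float for general $k$, via concavity of entropy along functionals, is neither needed nor justified, whereas the exterior-power reduction you also mention is the right one.
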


\begin{proof}[Sketch of proof:]
First, one verifies that for all $k\in\{1,\dots,d-1\}$, the image of the induced representation 
\[\bigwedge^k\rho:\Gamma_0\to\SL\left(\bigwedge^k\Rb^d\right)\]
is a $(1,1,2)$-hypertransverse subgroup. Furthermore one can compute that
\[\delta^{\alpha_k}(\Gamma)=\delta^{\alpha_1}\left(\bigwedge^k\rho(\Gamma_0)\right).\]
At the same time, the monotonicity properties of positive maps imply that the image of any positive map has Hausdorff dimension at most $1$. It then follows from Theorem~\ref{112} that $\delta^{\alpha_k}(\Gamma)\le 1$ for all $k\in\{1,\dots,d-1\}$. 

If $\Gamma_0$ is a lattice, then $\Lambda_{1,\dots,d-1}(\Gamma)$ is a continuous curve, and hence $\delta^{\alpha_k}(\Gamma)=1$ for all $k\in\{1,\dots,d-1\}$. Conversely, if $\Gamma_0$ is not a lattice, then by a well-known doubling construction, there is a lattice $\Gamma_0^D\subset\PSL_2(\Rb)$ that contains $\Gamma_0$, and $\Lambda(\Gamma_0)\subset\Lambda(\Gamma_0^D)$ is a proper subset. Furthermore, using properties of positive maps, one can prove that if $\rho:\Gamma_0\to\SL(d,\Rb)$ is a cusped Hitchin representation, then there is a cusped Hitchin representation $\rho^D:\Gamma_0^D\to\SL(d,\Rb)$ such that $\rho^D|_{\Gamma_0}=\rho$. We may then apply Theorem \ref{thm: entropy drop} to deduce that $\delta^{\alpha_k}(\Gamma)<\delta^{\alpha_k}(\Gamma^D)\le 1$, where $\Gamma^D:=\rho^D(\Gamma_0^D)$.
\end{proof}

\medskip\noindent{\bf Remark:} Dey--Kapovich \cite{DK1} showed that if $\Gamma$ is $P_\theta$-Anosov and $\phi\in \mathfrak{a}_\theta^*$ is symmetric, i.e. 
$\phi=i\circ \phi$ and has gradient in the interior of $\mathfrak{a}_\theta^+:=\mathfrak a^+\cap\mathfrak a_\theta$, then $\delta^\phi(\Gamma)$ is the Hausdorff dimension of $\Lambda_\theta(\Gamma)$
with respect to a Gromov pre-metric associated to $\phi$. Dey--Kim--Oh \cite{DKO} showed that one can replace the assumption that
$\phi$ is strictly positive on $\mathfrak{a}_\theta^*-\{\vec 0\}$ with the weaker assumption that the critical exponent is finite. Moreover, they showed that the
assumption that $\phi$ is symmetric is essential.

\section{Flow spaces for transverse groups}

In this section, we will explain how to use our projective geometry tool, Theorem \ref{projvisible}, to construct for any transverse subgroup of $\SL(d,\Rb)$ a flow space that admits a Hopf decomposition. This is used to prove the Hopf--Tsuji--Sullivan dichotomy, which is a theorem that describes the ergodicity properties of Patterson--Sullivan measures for transverse groups.

As before, let $\Gamma\subset\mathsf{SL}(d,\mathbb R)$ be a $P_\theta$-transverse subgroup, and let $\Gamma_0$, $\Omega$, $\rho$ and $\xi$ be as given in Theorem \ref{projvisible}. Since the Hilbert metric on $\Omega$ is Finsler, we may define its unit tangent bundle $T^1\Omega$, which admits a natural geodesic flow $\varphi_s$ and an ${\rm Aut}(\Omega)$-action that commutes with $\varphi_s$. 
The flow $\varphi_s$ can be described as followed: For any $\vec v\in T^1\Omega$, let $g_{\vec v}:\mathbb R\to \Omega$ be the unit speed parameterization of a projective line, so that $g_{\vec v}'(0)=\vec v$. Then for any $s\in\Rb$, $\varphi_s(\vec v)=g_{\vec v}'(s)$.

Let $\tilde U(\Gamma_0)\subset T^1\Omega$ be the set of tangent vectors to lines ending at points in $\Lambda_\Omega(\Gamma_0)$. After choosing a basepoint $b_0\in\Omega$, we  can again define a Hopf parametrization, using horospheres for the Hilbert metric, so that
$$\tilde U(\Gamma_0)=\Lambda_\Omega(\Gamma_0)^{(2)}\times \mathbb R,$$
where $\Lambda_\Omega(\Gamma_0)^{(2)}$ denotes the set of distinct pairs of points in $\Lambda_\Omega(\Gamma_0)$.
In this parametrization
$$\varphi_s(w,z,t)=(w,z,s+t)\quad\text{and}\quad \gamma(w,z,t)=\left(\gamma(w),\gamma(z),t+h_z(\gamma^{-1}(b_0),b_0)\right)$$
for all $\gamma\in\Gamma_0$, where
$$h_z(\gamma^{-1}(b_0),b_0)=\lim_{x_n\to z} d_\Omega(x_n,\gamma^{-1}(b_0))-d_\Omega(x_n,b_0).$$
Notice that $\tilde U(\Gamma_0)\subset T^1\Omega$ is a closed, $\Gamma_0$-invariant, $\varphi_s$-invariant subset, so $\varphi_s$ descends to a geodesic flow on $U(\Gamma_0):=\tilde U(\Gamma_0)/\Gamma_0$.

There is a natural involution $\iota:\mathfrak{a}_\theta^*\to\mathfrak{a}_\theta^*$ so that $\phi(\kappa(A))=\iota(\phi)(\kappa(A^{-1}))$.
In particular, $\iota(\omega_k)=\omega_{d-k}$. We often denote $\iota(\phi)$ by $\bar\phi$.
Since $P_\Gamma^\phi(s)=P_\Gamma^{\bar\phi}(s)$, we see that $\delta^\phi(\Gamma)=\delta^{\bar\phi}(\Gamma)$. Let $\phi\in\mathfrak{a_\theta^*}$ such that $\delta^\phi(\Gamma)<+\infty$, let $\mu$ be a $\phi$-Patterson-Sullivan measure for $\Gamma$ and
let $\bar\mu$ be a $\bar\phi$-Patterson-Sullivan measure for $\Gamma$. We will now describe how one can use $\mu$ and $\bar\mu$ to construct a $\varphi_s$-invariant measure on $U(\Gamma_0)$. 

To do so, we need an analog of the Gromov product. Let $\Fc_\theta^{(2)}$ denote the set of transverse pairs of flags in $\Fc_\theta$. Define the \emph{Gromov product} 
$$G_\theta:\Fc_\theta^{(2)}\to \mathfrak{a}_\theta$$
to be the map such that for all $k\in\theta$,
\[\omega_k(F_1,F_2)=\log\frac{\abs{\det(f_i(v_j))_{i,j}}}{\norm{f_1\wedge\dots\wedge f_k}_k^*\norm{v_1\wedge\dots\wedge v_k}_k}\]
where $v_1,\dots,v_k$ is some (any) basis of $F_1^k$; $f_1,\dots,f_k$ is some (any) basis for 
\[\{f\in(\Rb^d)^*:f(F_2^{d-k})=0\};\] 
$(f_i(v_j))_{i,j}$ denotes the $k\times k$-matrix whose $(i,j)$-th entry is $f_i(v_j)$;
and $\norm{\cdot}_k^*$ and $\norm{\cdot}_k$ are respectively the norms on $\bigwedge^k(\Rb^d)^*$ and $\bigwedge^k\Rb^d$ induced by the standard inner product on $\Rb^d$. One can verify that for all $A\in\SL(d,\Rb)$ and $(F_1,F_2)\in\Fc_\theta^{(2)}$
\[G(A(F_1),A(F_2))-G(F_1,F_2)=-\hat\iota\circ B_\theta(A,F_1)-B_\theta(A,F_2),\]
where  $\hat\iota:\mathfrak a_\theta\to\mathfrak a_\theta$  is the involution  $\hat\iota(a_1,a_2,\ldots,a_d)=(a_d,a_{d-1},\ldots, a_1)$.
 It follows that
$$d\tilde m(w,z,t):=e^{-\delta^\phi(\Gamma)\phi(G(\xi(w),\xi(z))}d\bar\mu(w) d\mu(z) ds(t)$$
is a $\Gamma_0$-invariant measure on $\tilde U(\Gamma_0)$. (Here $ds$ is Lebesgue measure on $\mathbb R$, so $d\tilde m$ is also $\varphi_s$-invariant.) This descends to a $\varphi_s$-invariant measure on $U(\Gamma_0)$, known as the \emph{BMS-measure}.

If $P_\Gamma^\phi(\delta^\phi(\Gamma))=+\infty$, then we can again adapt Hopf's argument
to show that the geodesic flow on $U(\Gamma_0)$ is ergodic with respect to the quotient $dm$ of $d\tilde m$. We then obtain the following analog of the Hopf--Tsuji--Sullivan dichotomy in our setting.

\begin{theorem}[{\cite[Thm. 1.4]{CZZ3}}] 
\label{HST:transverse}
 Suppose $\Gamma\subset\mathsf{SL}(d,\mathbb R)$ is a non-elementary $P_\theta$-transverse subgroup, 
$\phi\in \mathfrak{a}^*_\theta$ and $\delta:=\delta^\phi(\Gamma) < +\infty$. 
\begin{itemize}
\item If 
$Q_\Gamma^\phi(\delta)=+\infty$, 
then there exists a unique $\phi$-Patterson--Sullivan measure $\mu$ for $\Gamma$ and a unique $\bar{\phi}$-Patterson--Sullivan measure $\bar{\mu}$  for $\Gamma$. Moreover, $\mu( \Lambda_\theta^{\rm con}(\Gamma)) = 1=\bar\mu( \Lambda_\theta^{\rm con}(\Gamma))$, the action of $\Gamma$ on $(\Lambda_\theta(\Gamma)^2, \bar\mu \otimes \mu)$ is conservative and ergodic, and the actions of $\Gamma$ on $(\Lambda_\theta(\Gamma), \mu)$ and $(\Lambda_\theta(\Gamma), \bar\mu)$ are both ergodic. 
\item If $Q_\Gamma^\phi(\delta) < +\infty$, then for any $\phi$-Patterson--Sullivan measure $\mu$ for $\Gamma$ and any $\bar{\phi}$-Patterson--Sullivan measure $\bar{\mu}$ for $\Gamma$, $\mu( \Lambda_\theta^{\rm con}(\Gamma)) = 0 = \bar{\mu}( \Lambda_\theta^{\rm con}(\Gamma)) $, and the action of $\Gamma$ on $(\Lambda_\theta(\Gamma)^2, \bar\mu \otimes \mu)$ is dissipative and not ergodic. 
\end{itemize}
\end{theorem}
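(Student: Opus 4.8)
\medskip
\noindent
The plan is to run the classical Hopf--Tsuji--Sullivan argument on the flow space $U(\Gamma_0)=\tilde U(\Gamma_0)/\Gamma_0$ supplied by Theorem~\ref{projvisible}, passing back and forth between the BMS measure $m$ on $U(\Gamma_0)$ built from $\mu$ and $\bar\mu$ and the $\Gamma$--action on $\big(\Lambda_\theta(\Gamma)^2,\bar\mu\otimes\mu\big)$ through the Hopf parametrization $\tilde U(\Gamma_0)\cong\Lambda_\Omega(\Gamma_0)^{(2)}\times\mathbb R$ and the equivariant homeomorphism $\xi$. Concretely I would prove a chain of equivalences among the following: divergence of $Q_\Gamma^\phi$ at $\delta$; $\mu(\Lambda_\theta^{\mathrm{con}}(\Gamma))=1=\bar\mu(\Lambda_\theta^{\mathrm{con}}(\Gamma))$; conservativity of $m$; and ergodicity of $m$. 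The passage between the last two is Hopf's argument, and the passage from the Poincar\'e-type condition to the conical--measure statement is Borel--Cantelli fed by the Shadow Lemma, Proposition~\ref{prop:shadow estimates}.

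\medskip
\noindent
\emph{Convergence case.} Suppose $Q_\Gamma^\phi(\delta)<+\infty$. By Proposition~\ref{prop:shadow estimates} the $\mu$--mass of the shadow $\xi\big(\mathcal O_r(b_0,\gamma(b_0))\cap\Lambda_\Omega(\Gamma_0)\big)$ is comparable to $e^{-\delta\phi(\kappa_\theta(\rho(\gamma)))}$; summing over $\gamma\in\Gamma_0$ and invoking convergence of $Q_\Gamma^\phi$ at $\delta$, the easy half of Borel--Cantelli (exactly as in the Corollary following Proposition~\ref{prop:shadow estimates}) shows that $\mu$--almost every flag lies in only finitely many such shadows, so $\mu(\Lambda_\theta^{\mathrm{con}}(\Gamma))=0$, and symmetrically $\bar\mu(\Lambda_\theta^{\mathrm{con}}(\Gamma))=0$. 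Read through the Hopf coordinates, this says $m$--almost every complete geodesic of $U(\Gamma_0)$ eventually leaves every compact set in both time directions, so $m$ is totally dissipative. A dissipative $\sigma$--finite measure for an action with uncountably many (measure--zero) orbits is never ergodic: take a wandering set of positive finite measure, split it into two positive--measure pieces, and saturate each under $\varphi_s$ and $\Gamma_0$ to produce disjoint invariant sets of positive measure. Unwinding $\xi$, the $\Gamma$--action on $\big(\Lambda_\theta(\Gamma)^2,\bar\mu\otimes\mu\big)$ is dissipative and non--ergodic.

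\medskip
\noindent
\emph{Divergence case.} Suppose $Q_\Gamma^\phi(\delta)=+\infty$. The first and subtlest step is the converse Borel--Cantelli statement $\mu(\Lambda_\theta^{\mathrm{con}}(\Gamma))=1$ (and the same for $\bar\mu$): the two--sided estimate of Proposition~\ref{prop:shadow estimates} must be supplemented by a quasi--independence bound on the $\mu$--measure of intersections of two shadows, obtained from a nesting/Vitali covering argument for shadows in the Hilbert metric on $\Omega$ together with Theorem~\ref{projvisible}. Granting full conical measure, $m$ is conservative, since $m$--almost every geodesic of $U(\Gamma_0)$ returns infinitely often to a fixed compact set in forward time. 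Now run Hopf's argument on $(U(\Gamma_0),m)$: in the Hopf parametrization the strong stable and strong unstable leaves of $\varphi_s$ are the leaves on which $w$, resp.\ $z$, is constant, along which the Busemann cocycle contracts, resp.\ expands, and the measure $\tilde m$ splits as a product along the Hopf decomposition; by the Hopf ratio ergodic theorem a $\varphi_s$--invariant $L^\infty$ function agrees $m$--a.e.\ with a function of $z$ alone and with a function of $w$ alone, hence --- after quotienting out the $\mathbb R$--factor and using that $\bar\mu\otimes\mu$ has full support in $\Lambda_\theta(\Gamma)^2$ (by Proposition~\ref{prop: convergence group}, $\Lambda_\theta(\Gamma)$ is minimal and perfect and $\mu,\bar\mu$ have full support) --- it is $m$--a.e.\ constant. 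Thus $m$ is ergodic and $\Gamma\curvearrowright\big(\Lambda_\theta(\Gamma)^2,\bar\mu\otimes\mu\big)$ is conservative and ergodic; since $\bar\mu,\mu$ are probability measures, an invariant $A\subset\Lambda_\theta(\Gamma)$ with $0<\mu(A)<1$ would make $\Lambda_\theta(\Gamma)\times A$ invariant with $(\bar\mu\otimes\mu)$--measure in $(0,1)$, so $\Gamma$ is ergodic on $(\Lambda_\theta(\Gamma),\mu)$ and on $(\Lambda_\theta(\Gamma),\bar\mu)$. For uniqueness, if $\mu'$ is another $\phi$--Patterson--Sullivan measure then $\nu:=\tfrac12(\mu+\mu')$ is one as well; the transformation rule forces $d\mu/d\nu$ to be $\Gamma$--invariant $\nu$--a.e., ergodicity makes it constant, and equality of total masses gives $\mu=\nu=\mu'$; the $\bar\phi$ statement is identical.

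\medskip
\noindent
\emph{The main obstacle.} Because the Hilbert metric on $\Omega$ is Finsler, generically non--Riemannian, and not Gromov hyperbolic, the principal difficulty is to verify that projective visibility furnishes precisely the geometric inputs the classical arguments need: well--defined horospheres and Busemann cocycle for the Hopf parametrization, contraction/expansion of that cocycle along the stable/unstable leaves that powers Hopf's argument, and --- the single hardest point --- the bounded--overlap (Vitali) behaviour of the shadows $\mathcal O_r(b_0,\gamma(b_0))\cap\Lambda_\Omega(\Gamma_0)$ needed for the converse Borel--Cantelli step. I expect essentially all of the technical weight of the proof to sit in that shadow--geometry estimate and in establishing conservativity of $m$; once those are in hand the rest is a faithful transcription of the Fuchsian arguments sketched earlier in the paper.
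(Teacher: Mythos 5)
Your proposal is correct and follows essentially the same route as the paper: realize $\Gamma$ as a projectively visible group via Theorem~\ref{projvisible}, work on the Hilbert-geometry flow space $U(\Gamma_0)$ with the BMS measure built from $\bar\mu\otimes\mu$ and the Gromov product, feed the Shadow Lemma (Proposition~\ref{prop:shadow estimates}) into Borel--Cantelli arguments for the conical-measure dichotomy, and run Hopf's argument for ergodicity in the divergent case. You have also correctly located where the technical weight lies (the shadow-overlap estimates needed for the converse Borel--Cantelli step and for conservativity), which is exactly what the full proof in \cite{CZZ3} spends its effort on.
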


Dey--Kapovich \cite{DK1} proved that when $\Gamma$ is $\Psf_\theta$-Anosov and $\phi\in\mathfrak a_\theta^*$ is positive on $\mathfrak a^+\cap\mathfrak a_\theta$, then there is a unique $\phi$-Patterson--Sullivan measure $\mu$, $\mu(\Lambda_\theta(\Gamma))=1$, and the action of $\Gamma$ on $(\Lambda_\theta(\Gamma),\mu)$ is ergodic. Sambarino \cite{sambarino-dichotomy} later used thermodynamical formalism to strengthen Dey--Kapovich's results to allow for any $\phi\in\mathfrak a_\theta^*$ such that $\delta^\phi(\Gamma)<+\infty$, and he also established the ergodicity of the action of $\Gamma$ on $(\Lambda_\theta(\Gamma)^2, \bar\mu \otimes \mu)$ is ergodic. In the case when $\Gamma$ is Zariski dense and $\Psf_{1,\dots,d-1}$-Anosov, these results were also proven by Lee--Oh \cite{LeeOh}.

Combining Theorem \ref{HST:transverse} with Theorem \ref{thm: rel Anosov divergent}, we see that for every $\Psf_\theta$-relatively Anosov subgroup $\Gamma\subset\SL(d,\Rb)$ and every $\phi\in\mathfrak a_\theta^*$ such that $\delta^\phi(\Gamma)<+\infty$, there is a unique $\phi$-Patterson--Sullivan measure for $\Gamma$, which is ergodic with respect to the $\Gamma$-action on $\Lambda_\theta(\Gamma)$. 

We may also apply Theorem \ref{HST:transverse} to establish a concavity result for the critical exponent regarded as a function on
$\mathfrak{a}_\theta^*$.

\begin{theorem}[{\cite[Thm. 13.1]{CZZ3}}]
\label{manhattan}
Suppose $\Gamma\subset\mathsf{SL}(d,\mathbb R)$ is a non-elementary $P_\theta$-transverse subgroup, $\phi_1, \phi_2 \in \mathfrak{a}^*_\theta$ and
$\delta^{\phi_1}(\Gamma)=\delta^{\phi_2}(\Gamma) =1$. 
If $\phi = \lambda \phi_1+(1-\lambda)\phi_2$ where $\lambda \in (0,1)$, then 
$$
\delta^{\phi}(\Gamma) \le 1. 
$$
Moreover, if  $\delta^{\phi}(\Gamma) = 1$ and $Q_\Gamma^\phi$ diverges at its critical exponent, then $\ell^{\phi_1}(\gamma) = \ell^{\phi_2}(\gamma)$ 
for all $\gamma \in \Gamma$.
\end{theorem}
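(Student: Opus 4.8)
The plan is to obtain $\delta^\phi(\Gamma)\le 1$ directly from H\"older's inequality applied to the Poincar\'e series, and then to prove the rigidity statement by upgrading this to a comparison of Patterson--Sullivan measures via the Shadow Lemma, and converting the resulting absolute continuity into an equality of length functions by a Liv\v{s}ic-type argument. For the inequality, fix $s>1$. Since $\delta^{\phi_i}(\Gamma)=1$, the series $P_\Gamma^{\phi_1}(s)$ and $P_\Gamma^{\phi_2}(s)$ both converge, and writing $e^{-s\phi(\kappa_\theta(\gamma))}=\big(e^{-s\phi_1(\kappa_\theta(\gamma))}\big)^{\lambda}\big(e^{-s\phi_2(\kappa_\theta(\gamma))}\big)^{1-\lambda}$, H\"older's inequality with conjugate exponents $1/\lambda$ and $1/(1-\lambda)$ gives
$$P_\Gamma^\phi(s)\le P_\Gamma^{\phi_1}(s)^{\lambda}\,P_\Gamma^{\phi_2}(s)^{1-\lambda}<+\infty\qquad\text{for all }s>1,$$
so $\delta^\phi(\Gamma)\le 1$.

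Now assume $\delta^\phi(\Gamma)=1$ and that $Q_\Gamma^\phi$ diverges at $1$. By Theorem \ref{HST:transverse} there is a unique $\phi$-Patterson--Sullivan measure $\mu$ for $\Gamma$ (of dimension $1$), it satisfies $\mu(\Lambda_\theta^{\mathrm{con}}(\Gamma))=1$, and the $\Gamma$-action on $(\Lambda_\theta(\Gamma),\mu)$ is ergodic; moreover $\mu$ has full support in $\Lambda_\theta(\Gamma)$ since the action is minimal (Proposition \ref{prop: convergence group}). By Theorem \ref{trans:psexists} fix a $\phi_i$-Patterson--Sullivan measure $\mu_i$ (of dimension $\delta^{\phi_i}(\Gamma)=1$) for $i=1,2$. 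Applying the Shadow Lemma (Proposition \ref{prop:shadow estimates}) to $\mu,\mu_1,\mu_2$ simultaneously yields $r>0$ and $C>1$ so that, writing $S_\gamma:=\xi\big(\Oc_r(b_0,\gamma(b_0))\cap\Lambda_\Omega(\Gamma_0)\big)$ for $\gamma\in\Gamma_0$, the masses $\mu(S_\gamma)$, $\mu_1(S_\gamma)$, $\mu_2(S_\gamma)$ are comparable up to $C$ to $e^{-\phi(\kappa_\theta(\rho(\gamma)))}$, $e^{-\phi_1(\kappa_\theta(\rho(\gamma)))}$, $e^{-\phi_2(\kappa_\theta(\rho(\gamma)))}$ respectively; since $\phi=\lambda\phi_1+(1-\lambda)\phi_2$, it follows that $\mu(S_\gamma)$ is comparable to $\mu_1(S_\gamma)^{\lambda}\mu_2(S_\gamma)^{1-\lambda}$ with constants independent of $\gamma$. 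A Vitali/Besicovitch covering argument for shadows — available because the $C^1$-smoothness and visibility in Theorem \ref{projvisible} make shadows ball-like — then gives $\mu\ll\mu_1$: for $A$ with $\mu_1(A)=0$, cover $A\cap\Lambda_\theta^{\mathrm{con}}(\Gamma)$ (which carries all of $\mu|_A$) by shadows $S_{\gamma_j}$ lying in an open set of small $\mu_1$-mass with bounded multiplicity $N$, and estimate, using H\"older again,
$$\mu(A)\le\sum_j\mu(S_{\gamma_j})\lesssim\sum_j\mu_1(S_{\gamma_j})^{\lambda}\mu_2(S_{\gamma_j})^{1-\lambda}\le\Big(\sum_j\mu_1(S_{\gamma_j})\Big)^{\lambda}\Big(\sum_j\mu_2(S_{\gamma_j})\Big)^{1-\lambda},$$
which is arbitrarily small. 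Set $g:=d\mu/d\mu_1$ and $u:=\log g$.

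The quasi-invariance identities $d\gamma_*\mu/d\mu=e^{-\phi(B_\theta(\gamma^{-1},\cdot))}$ and $d\gamma_*\mu_1/d\mu_1=e^{-\phi_1(B_\theta(\gamma^{-1},\cdot))}$, together with $\mu\ll\mu_1$, force
$$u(\gamma z)-u(z)=(1-\lambda)\,(\phi_1-\phi_2)\big(B_\theta(\gamma,z)\big)\qquad\text{for all }\gamma\in\Gamma,\ \mu\text{-a.e.\ }z,$$
so the H\"older cocycle $c(\gamma,z):=(1-\lambda)(\phi_1-\phi_2)(B_\theta(\gamma,z))$ is a measurable coboundary for the $\Gamma$-action on $(\Lambda_\theta(\Gamma),\mu)$. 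Since $\mu$ is ergodic with full support and the action is a minimal convergence action with strong contraction properties, a Liv\v{s}ic-type regularity argument — equivalently, transferring $c$ to a H\"older function over the geodesic flow on $U(\Gamma_0)$, which then integrates to zero against the ergodic BMS measure and is therefore a continuous coboundary — shows that $u$ coincides $\mu$-a.e.\ with a continuous $\widetilde u:\Lambda_\theta(\Gamma)\to\mathbb R$ satisfying $c(\gamma,z)=\widetilde u(\gamma z)-\widetilde u(z)$ for all $\gamma\in\Gamma$ and \emph{all} $z\in\Lambda_\theta(\Gamma)$. If $\gamma$ is $\theta$-loxodromic, evaluating this at the attracting fixed flag $z_\gamma=U_\theta(\gamma)\in\Lambda_\theta(\Gamma)$ gives $c(\gamma,z_\gamma)=0$, while $B_\theta(\gamma,z_\gamma)=\nu_\theta(\gamma)$; since $\lambda\in(0,1)$ this yields $(\phi_1-\phi_2)(\nu_\theta(\gamma))=0$, i.e.\ $\ell^{\phi_1}(\gamma)=\ell^{\phi_2}(\gamma)$. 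Together with the trivial finite-order case ($\nu_\theta(\gamma)=0$), a standard approximation argument extends the equality to all $\gamma\in\Gamma$.

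The main obstacle is the passage from the Shadow Lemma estimates to the absolute continuity $\mu\ll\mu_1$: this is the genuine ``equality case'' of the H\"older comparison and requires the two-sided shadow bounds for all three measures together with a clean covering lemma for shadows in $\Lambda_\Omega(\Gamma_0)$ — and it is precisely here that the hypothesis $Q_\Gamma^\phi$ divergent enters, through $\mu(\Lambda_\theta^{\mathrm{con}}(\Gamma))=1$. A secondary delicate point is the Liv\v{s}ic regularity promoting the measurable coboundary to a continuous one, for which one uses the local product structure of the BMS measure on $U(\Gamma_0)$ and density of periodic orbits.
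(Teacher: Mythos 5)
Your first half is correct and is the standard argument: H\"older's inequality with exponents $1/\lambda$ and $1/(1-\lambda)$ gives $P_\Gamma^\phi(s)\le P_\Gamma^{\phi_1}(s)^{\lambda}P_\Gamma^{\phi_2}(s)^{1-\lambda}<+\infty$ for all $s>1$, hence $\delta^\phi(\Gamma)\le 1$. The opening moves of your rigidity argument are also in the right circle of ideas: by Theorem \ref{HST:transverse} the divergence hypothesis gives a unique $\phi$-Patterson--Sullivan measure $\mu$ with $\mu(\Lambda^{con}_\theta(\Gamma))=1$, and Proposition \ref{prop:shadow estimates} applied to $\mu,\mu_1,\mu_2$ does give $\mu(S_\gamma)\asymp\mu_1(S_\gamma)^{\lambda}\mu_2(S_\gamma)^{1-\lambda}$ uniformly. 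But your passage from this to $\mu\ll\mu_1$ rests on a Vitali/Besicovitch covering lemma with bounded multiplicity for shadows, justified only by the remark that visibility and $C^1$-smoothness make shadows ``ball-like.'' For a bare $\Psf_\theta$-transverse group this is exactly what is \emph{not} available: the comparison of shadows with metric balls is the content of the inradius estimates that require the additional $(1,1,2)$-hypertransversality hypothesis (see the discussion around Theorem \ref{112}); in general shadows in $\partial\Omega$ or $\Fc_\theta$ can be arbitrarily eccentric, so a bounded-overlap extraction must be proved, not cited. This step is fixable with the covering statements actually established in \cite{CZZ3}, but as written it is a gap.

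The more serious problem is the Liv\v{s}ic step, which carries the entire second half and is not available here. First, the statement you invoke is false even where Liv\v{s}ic theory applies: a H\"older function that integrates to zero against an ergodic invariant measure is essentially never a continuous coboundary; the measurable Liv\v{s}ic theorem requires a (metric) Anosov flow, a measure with local product structure and full support, and density of periodic orbits. Second, for a general transverse group the flow on $U(\Gamma_0)$ is merely a topological flow on a (typically non-compact) quotient --- there is no hyperbolicity, no H\"older structure and no symbolic coding, and avoiding precisely this machinery is the point of the transverse-group theory; so the regularity upgrade from a $\mu$-a.e.\ coboundary to a continuous one cannot be performed. The detour is also unnecessary. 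The intended conclusion is elementary: once one has a genuine two-sided comparison between $\mu$ and $\mu_1$ on shadows, the Shadow Lemma converts it into a uniform bound
$$\sup_{\gamma}\,\big|(\phi_1-\phi_2)(\kappa_\theta(\gamma))\big|<+\infty$$
over the relevant elements of $\Gamma$, and then for every $\gamma\in\Gamma$ (loxodromic or not)
$$\ell^{\phi_1}(\gamma)-\ell^{\phi_2}(\gamma)=(\phi_1-\phi_2)(\nu_\theta(\gamma))=\lim_{n\to\infty}\tfrac1n\,(\phi_1-\phi_2)(\kappa_\theta(\gamma^n))=0,$$
using only that the Jordan projection is the limit of $\kappa_\theta(\gamma^n)/n$. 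This removes both the regularity issue and your separate treatment of non-loxodromic elements. In short: the inequality is proved, the rigidity is not --- the two load-bearing steps (the covering lemma and the Liv\v{s}ic upgrade) are respectively under-justified and unavailable.
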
 

One may then apply a result of Benoist \cite{benoist-limit-cone} to conclude that strict concavity holds whenever $\Gamma$ is Zariski dense.

\begin{corollary}[{\cite[Cor. 13.2]{CZZ3}}]
Suppose $\Gamma\subset\mathsf{SL}(d,\mathbb R)$ is Zariski dense and  $P_\theta$-transverse, $\phi_1, \phi_2 \in \mathfrak{a}^*_\theta$, $\phi_1\ne\phi_2$
and $\delta^{\phi_1}(\Gamma)=\delta^{\phi_2}(\Gamma) =1$.
If $\phi = \lambda \phi_1+(1-\lambda)\phi_2$ where $\lambda \in (0,1)$ and $Q_\Gamma^\phi$ diverges at its critical exponent, then $\delta^{\phi}(\Gamma) <1$.
\end{corollary}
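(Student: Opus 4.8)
The plan is to deduce this corollary from Theorem~\ref{manhattan} together with Benoist's theorem on limit cones \cite{benoist-limit-cone}. Since $\delta^{\phi_1}(\Gamma)=\delta^{\phi_2}(\Gamma)=1$, Theorem~\ref{manhattan} already gives $\delta^\phi(\Gamma)\le 1$; the task is to rule out equality under the stated hypotheses. So I would argue by contradiction: assume $\delta^\phi(\Gamma)=1$. Since we are also assuming that $Q_\Gamma^\phi$ diverges at its critical exponent, the ``moreover'' clause of Theorem~\ref{manhattan} applies and yields
\[
\ell^{\phi_1}(\gamma)=\ell^{\phi_2}(\gamma)\quad\text{for all }\gamma\in\Gamma.
\]
Unwinding the definitions, $\ell^{\phi_i}(\gamma)=\phi_i(\nu_\theta(\gamma))$, so this says $(\phi_1-\phi_2)(\nu_\theta(\gamma))=0$, i.e.\ the linear functional $\psi:=\phi_1-\phi_2\in\mathfrak a_\theta^*$ vanishes on the set $\{\nu_\theta(\gamma):\gamma\in\Gamma\}$, and hence on the $\theta$-Benoist limit cone $\Bc_\theta(\Gamma)$ (which is the closure of the lines spanned by these Jordan projections, up to the usual remarks about finitely many exceptions).

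The second ingredient is Benoist's theorem: when $\Gamma$ is Zariski dense, its Benoist limit cone $\Bc(\Gamma)\subset\mathfrak a^+$ has nonempty interior in $\mathfrak a$. Projecting by $p_\theta$, one gets that $\Bc_\theta(\Gamma)$ has nonempty interior in $\mathfrak a_\theta$. (Here one should check that $p_\theta(\Bc(\Gamma))$ and $\Bc_\theta(\Gamma)$ coincide, which is immediate from $\nu_\theta=p_\theta\circ\nu$ and continuity of $p_\theta$, and that $p_\theta$ is surjective onto $\mathfrak a_\theta$, which holds because $\{\omega_k|_{\mathfrak a_\theta}:k\in\theta\}$ is a basis of $\mathfrak a_\theta^*$.) A linear functional on $\mathfrak a_\theta$ that vanishes on a set with nonempty interior must be identically zero, so $\psi=0$, i.e.\ $\phi_1=\phi_2$. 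This contradicts the hypothesis $\phi_1\ne\phi_2$, and therefore $\delta^\phi(\Gamma)<1$.

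I expect the only real subtlety to be bookkeeping about the ``finitely many exceptions'' in passing from $\psi(\nu_\theta(\gamma))=0$ for all $\gamma$ to $\psi\equiv 0$ on $\Bc_\theta(\Gamma)$ — but in fact the equality $\ell^{\phi_1}=\ell^{\phi_2}$ holds for \emph{all} $\gamma\in\Gamma$ with no exceptions, so the limit cone, being generated by the rays $\Rb^+\nu_\theta(\gamma)$, lies entirely in $\ker\psi$, and no exceptional-set argument is needed. The genuinely nontrivial inputs — the Manhattan-curve rigidity (Theorem~\ref{manhattan}) and the openness of the limit cone for Zariski dense groups (Benoist) — are both being cited, so the corollary itself is a short deduction; the main point is simply to package these two facts correctly.
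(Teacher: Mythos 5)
Your argument is correct and is essentially the paper's own proof: the survey's one-line justification ("apply a result of Benoist to conclude that strict concavity holds whenever $\Gamma$ is Zariski dense") is exactly the deduction you spell out, namely that equality in Theorem~\ref{manhattan} forces $\phi_1-\phi_2$ to vanish on the Jordan projections, while Benoist's theorem for Zariski dense groups makes these span (indeed generate a cone with nonempty interior in) $\mathfrak a_\theta$, contradicting $\phi_1\ne\phi_2$.
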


\medskip\noindent
{\bf Remark:} Potrie--Sambarino \cite{potrie-sambarino} showed that if $\phi\in\mathfrak{a}^*$ is strictly positive on $\mathfrak{a}^+-\{0\}$, then
then the crticial exponent $\delta^\phi$, regarded as  a function on a Hichin component achieves its maximum precisely on the Fuchsian locus.
One can use Theorerms \ref{entropy rigidity} and \ref{manhattan}  to establish the same result for cusped Hitchin representations of Fuchsian
lattices (see \cite{CZZ2} for a more detailed statement).

\section{Recent work}

In this section, we survey some of the work which has been announced more recently which generalizes and expands on the work 
described in the previous sections.

\subsection{Kim, Oh and Wang's approach to dynamics of transverse group}
\label{KOWsec}
Kim, Oh and Wang \cite{KOW} developed an alternate approach to studying Patterson--Sullivan measures associated to transverse groups. 
They first define an abstract Weyl chamber flow
$$\tilde W(\Gamma)=\Lambda_\theta(\Gamma)^{(2)}\times \mathfrak{a}_\theta$$
where an element of $\mathfrak a_\theta$ acts trivially on the first two factors and by addition on the final factor.
The group $\Gamma$ acts on $\tilde W(\Gamma)$ by
$$\gamma(F,G,\vec a)=\left(\gamma(F),\gamma(G), \vec a+B_\theta(\gamma, G)\right).$$
They prove that $\Gamma$ acts properly discontinuously and the $\mathfrak a_\theta$ flow on $\tilde W(\Gamma)$ descends
to a $\mathfrak a_\theta$ flow on $W(\Gamma)=\tilde W(\Gamma)/\Gamma.$ 

If $\phi\in\mathfrak{a}_\theta^*$ and $\delta^\phi(\Gamma)<+\infty$, then there is a projection
$$p^\phi:\tilde W(\Gamma)\to \tilde W^\phi(\Gamma):=\Lambda_\theta(\Gamma)^{(2)}\times \mathbb R$$
given by
$$p^\phi(F,G,\vec a)=(F,G,\phi(\vec a)).$$
The $\Gamma$ action of $\tilde W(\Gamma)$ descends to an action on $\tilde W^\phi(\Gamma)$ which satisfies 
$$\gamma(F,G, t)=\Big(\gamma(F),\gamma(G), t+\phi(B_\theta(\gamma, G))\Big)$$
and commutes with the flow $\varphi_s$ on $\tilde W^\phi(\Gamma)$ given by $\varphi_s(F,G,t)=(F,G,s+t)$. They show that $\Gamma$ is $P_\theta$-Anosov if and only if 
\[W^\phi(\Gamma):=\tilde W^\phi(\Gamma)/\Gamma\] 
is compact. Furthermore, if $\delta^\phi(\Gamma)<+\infty$, they construct a BMS measure on $W^\phi(\Gamma)$ and show that if
$P^\phi_\Gamma(\delta^\phi(\Gamma))=+\infty$, then the flow $\varphi_s$ on $W^\phi(\Gamma)$ is ergodic with
respect to this BMS measure (which is unique in this case). 
Their flow space $W^\phi(\Gamma)$ has an important advantage over the flow space $U(\Gamma_0)$: The periods of the flow on $W^\phi(\Gamma)$ are exactly given by 
$$\left\{\ell^\phi(\gamma): \gamma\in [\Gamma]_{hyp}\right\}.$$

They use their flow space  to recover the main results in~\cite{CZZ3} for Zariski dense transverse groups. They also obtain new applications to the study
of the $\theta$-growth indicator function for $\theta$-transverse groups. Moreover, they establish an analogue of Ahlfors' Measure conjecture for $\Psf_\theta$-Anosov
groups, by showing that the $\theta$-limit set either has Lebesgue measure zero or is all of $\mathcal F_\theta$, in which case the group is a uniform lattice
in a rank one Lie group (for Zariski dense, Borel Anosov subgroups, this was proven by Lee and Oh \cite{LeeOh1}).

\subsection{A general theory of Patterson-Sullivan measures for convergence group actions}
Blayac--Canary--Zhu--Zimmer \cite{BCZZ1,BCZZ2} develop a theory of Patterson--Sullivan measures for general cocycles associated to convergence group 
actions. In~\cite{BCZZ1}, the authors developed a Patterson--Sullivan theory for coarse-cocycles with a certain uniform expansion property 
(called \emph{expanding coarse-cocyles}) and in~\cite{BCZZ2} the authors consider cocycles that are a part of a continuous GPS system 
(which consists of a pair of expanding coarse cycles linked by a coarse Gromov product). 
We will describe some of their results for the less technical continuous GPS system setting.

Suppose $\Gamma$ acts as a convergence group on a compact perfect metric space $X$. A continuous map $\sigma:\Gamma\times X\to \mathbb R$ is a \emph{cocycle} when
$$\sigma(\gamma_1\gamma_2,x)=\sigma(\gamma_1,\gamma_2x)+\sigma(\gamma_2,x)$$
for all $\gamma_1,\gamma_2\in\Gamma$ and $x\in X$. The \emph{$\sigma$-period} of an element $\gamma \in \Gamma$ is 
$$
\ell_\sigma(\gamma) := \begin{cases} 
\sigma(\gamma, \gamma^+) & \text{if } \gamma \text{ is loxodromic and $\gamma^+$ is the attracting fixed point of $\gamma$,} \\
0 & \text{otherwise.}
\end{cases}
$$ 
It is natural to assume that $\sigma$ is {\em proper}, that is 
$$
\lim_{n \to \infty} \ell_\sigma(\gamma_n)=+\infty
$$
whenever $\{\gamma_n\} \subset \Gamma$ is a sequence of distinct loxodromic elements whose repelling/attracting fixed points $\{ (\gamma_n^-, \gamma_n^+)\}$ are relatively compact in $X^{(2)} \subset X \times X$, the space of distinct pairs.  

Given a proper cocycle $\sigma : \Gamma \times X \rightarrow \Rb$, a \emph{$\sigma$-Patterson--Sullivan measure of dimension $\delta$} is a probability measure 
$\mu$ on $X$ where for all $\gamma\in\Gamma$, the measures $\mu$ and $\gamma_*(\mu)$ are absolutely continuous, and 
$$
\frac{d\gamma_*(\mu)}{d\mu}=e^{-\delta\sigma(\gamma^{-1},\cdot)}
$$
almost everywhere. 

A cocycle $\sigma$ is {\em expanding} if it is proper and we may define a magnitude $||\gamma||_\sigma$ for all $\gamma\in\Gamma$ so
that for all $\epsilon>0$ there exists $C>0$ so that if $d(x,\gamma^{-1})>\epsilon$ (for an appropriate metric on $\Gamma\cup X$), then
$$||\gamma||_\sigma -C\le \sigma(\gamma,x)\le ||\gamma||_\sigma+C.$$
We can then define a Poincar\'e series
$$P_\sigma(s)=\sum_{\gamma\in\Gamma}e^{-s||\gamma||_\sigma}$$
with an associated critical exponent $\delta_\sigma(\Gamma)$. If the critical exponent is finite,  then there exists a Patterson-Sullivan measure on $X$ of
dimension $\delta_\sigma(\Gamma)$ (see \cite[Prop 4.1]{BCZZ1}). One can establish an analogue of Sullivan's Shadow Lemma in this setting
and derive many of its standard consequences (see \cite{BCZZ1}).

A continuous {\em GPS}-system for $\Gamma$ is a pair of proper  continuous cocyles $\sigma$ and $\bar\sigma$ related by a {\em Gromov product} which is
a continuous function $G:X^{(2)}\to \mathbb R$ so that
$$\bar\sigma(\gamma,x)+\sigma(\gamma,y)=G(\gamma(x),\gamma(y))-G(x,y).$$
The cocycles in a GPS system are always expanding and $\delta_\sigma(\Gamma)=\delta_{\bar\sigma}(\Gamma)$ (see \cite[Prop 3.4]{BCZZ1}).
For $\Hb^d$, the Busemann cocycle $B$ and Gromov product $G$ form a GPS system $(B,B,G)$ for any discrete subgroup in $\mathsf{Isom}(\Hb^d)$. 
More generally, for transverse subgroups, the cocycles and Gromov product introduced earlier in this paper form a GPS system.

When $(\sigma, \bar \sigma, G)$ is a GPS system for $\Gamma$ acting on $X$, a properly discontinuous action of $\Gamma$ on $X^{(2)} \times \Rb$ can be defined by 
$$
\gamma \cdot (x,y,t) = (\gamma x, \gamma y, t + \sigma(\gamma, y))
$$
(see \cite[Prop.\,10.2]{BCZZ1}). Then if $\mu$ is a $\sigma$-Patterson--Sullivan measure of dimension $\delta=\delta_\sigma(\Gamma)$ and 
$\bar\mu$ is a $\bar\sigma$-Patterson--Sullivan measure of dimension $\delta$, the measure 
$$
d\tilde m(w,z,t): = e^{\delta G(x,y)} d\bar\mu(x)d\mu(y)ds(t)
$$
on $X^{(2)} \times \Rb$ is $\Gamma$-invariant (this follows immediately from the defining relationship between $\bar\sigma$, $\sigma$, and $G$). As usual we can define a flow $\varphi_s$ on $X^{(2)} \times \Rb$ by translating on the $\Rb$ factor. This flow descends to a flow on the quotient $(X^{(2)} \times \Rb) / \Gamma$ and the measure $\tilde m$ descends to a flow invariant measure, which we call a \emph{Bowen--Margulis--Sullivan (BMS) measure}. 
In the special case of GPS systems associated to transverse groups this construction coincides with the one in~\cite{KOW}, see subsection~\ref{KOWsec} above. 

The following analogue of the Hopf--Tsuji--Sullivan dichotomy was established in \cite{BCZZ1}.

\begin{theorem}[{\cite[Thm. 1.8 and Section 10.4]{BCZZ1}}]
Suppose $(\sigma, \bar\sigma, G)$ is a continuous GPS system and $\delta_\sigma(\Gamma) < +\infty$. 
Let $\mu$ and $\bar \mu$ be Patterson--Sullivan measures of dimension $\delta$ for $\sigma$ and  $\bar{\sigma}$.
\begin{enumerate} 
\item If $\sum_{\gamma \in \Gamma} e^{-\delta \norm{\gamma}_\sigma} = +\infty$, then:
\begin{enumerate}
\item $\delta = \delta_\sigma(\Gamma)$.
\item $\mu( \Lambda^{\rm con}(\Gamma)) = 1 = \bar\mu(\Lambda^{\rm con}(\Gamma))$. 
\item The natural flow on $X^{(2)} \times \Rb / \Gamma$ is ergodic with respect to the Bowen--Margulis--Sullivan measure constructed from $\bar\mu$ and $\mu$. 
\item The $\Gamma$ action on $(X^{(2)},\bar\mu \otimes \mu)$ is ergodic and conservative. 
\end{enumerate} 
\item If $\sum_{\gamma \in \Gamma} e^{-\delta \norm{\gamma}_\sigma} < +\infty$, then:
\begin{enumerate}
\item $\delta \ge \delta_\sigma(\Gamma)$.
\item $\mu( \Lambda^{\rm con}(\Gamma)) = 0 = \bar\mu(\Lambda^{\rm con}(\Gamma))$. 
\item The natural flow on $X^{(2)} \times \Rb / \Gamma$ is non-ergodic with respect to the Bowen--Margulis--Sullivan measure constructed from $\bar\mu$ and $\mu$. 
\item The $\Gamma$ action on $(X^{(2)}, \bar\mu \otimes \mu)$ is non-ergodic and dissipative. 
\end{enumerate} 
\end{enumerate} 

\end{theorem}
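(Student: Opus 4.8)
The plan is to adapt the classical Hopf--Tsuji--Sullivan argument (Hopf \cite{hopf}, Sullivan \cite{sullivan-density}, and the boundary-action reformulation going back to Roblin \cite{roblin}) to the GPS setting, using only two structural inputs: the Shadow Lemma for expanding cocycles from \cite{BCZZ1}, which gives $\mu(\mathcal{O}_r(\gamma))\asymp e^{-\delta\norm{\gamma}_\sigma}$ for shadows (and the same for $\bar\mu$), and the defining relation $\bar\sigma(\gamma,x)+\sigma(\gamma,y)=G(\gamma x,\gamma y)-G(x,y)$. Everything will be organized around one pivot: \emph{the Poincar\'e series $\sum_\gamma e^{-\delta\norm{\gamma}_\sigma}$ diverges if and only if the diagonal $\Gamma$-action on $X^{(2)}$, in the measure class of $\bar\mu\otimes\mu$, is conservative.}

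First I would read off from the Shadow Lemma the assertions about $\delta$ and about the conical limit set that require no recurrence input. Summing $\mu(\mathcal{O}_r(\gamma))\asymp e^{-\delta\norm{\gamma}_\sigma}$ over $\{\gamma:\norm{\gamma}_\sigma\le R\}$ and using that at a fixed scale a point of $X$ lies in only boundedly many shadows (this is where properness of $\sigma$ enters) gives $\#\{\gamma:\norm{\gamma}_\sigma\le R\}\lesssim e^{\delta R}$, hence $\delta\ge\delta_\sigma(\Gamma)$ with no hypothesis; in the divergent case $\delta\le\delta_\sigma(\Gamma)$ also holds, directly from the definition of the critical exponent, so $\delta=\delta_\sigma(\Gamma)$, while in the convergent case only $\delta\ge\delta_\sigma(\Gamma)$ survives. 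For the conical limit set in the convergent case, write $\Lambda^{\rm con}(\Gamma)=\bigcup_{r>0}\Lambda^{\rm con}_r(\Gamma)$ with $\Lambda^{\rm con}_r(\Gamma)\subset\limsup_{\gamma\in\Gamma}\mathcal{O}_r(\gamma)$; since $\sum_\gamma\mu(\mathcal{O}_r(\gamma))\asymp\sum_\gamma e^{-\delta\norm{\gamma}_\sigma}<+\infty$, the Borel--Cantelli lemma gives $\mu(\Lambda^{\rm con}(\Gamma))=0$, and likewise $\bar\mu(\Lambda^{\rm con}(\Gamma))=0$.

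Next I would prove the pivot. Because $\bar\sigma$ and $\sigma$ are linked by $G$, the measure $\hat\nu:=e^{\delta G(w,z)}\,d\bar\mu(w)\,d\mu(z)$ on $X^{(2)}$ is $\Gamma$-invariant (a short computation from the GPS relation and the Patterson--Sullivan transformation rules) and lies in the class of $\bar\mu\otimes\mu$, so it suffices to decide conservativity of the $\hat\nu$-preserving action. For a ``box'' $B=U\times V$ with $\overline U\cap\overline V=\emptyset$ and $0<\hat\nu(B)<+\infty$, the Shadow Lemma, together with the local product structure it encodes, gives $\sum_\gamma\hat\nu(\gamma B\cap B)\asymp_B\sum_\gamma e^{-\delta\norm{\gamma}_\sigma}$ --- a return $\gamma B\cap B$ being carried by flow lines that loop around $\gamma$ at ``time cost'' $\norm{\gamma}_\sigma+O_B(1)$. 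A Borel--Cantelli / Hopf-decomposition argument then turns divergence of the Poincar\'e series into conservativity of the action, and convergence into total dissipativity. A totally dissipative action is never ergodic, and the quotient flow on $(X^{(2)}\times\mathbb{R})/\Gamma$ over a totally dissipative base action is again totally dissipative (a fundamental domain for the base, crossed with $\mathbb{R}$, is a fundamental domain upstairs), hence non-ergodic; this settles the convergent case. In the conservative case, transporting recurrence through the Hopf parametrization --- a flow line descends to a forward-recurrent orbit exactly when its forward endpoint is a conical limit point --- gives $\mu(\Lambda^{\rm con}(\Gamma))=1$, and symmetrically $\bar\mu(\Lambda^{\rm con}(\Gamma))=1$.

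The remaining step, ergodicity of the flow in the divergent case, is Hopf's argument, and I expect it to be the main obstacle. In Hopf coordinates the BMS measure is the product $e^{\delta G(w,z)}\,d\bar\mu(w)\,d\mu(z)\,ds$, the flow $\varphi_s$ is translation in the last variable, and by the pivot it is conservative; the Hopf ratio ergodic theorem then produces, for $f,g\in C_c\big((X^{(2)}\times\mathbb{R})/\Gamma\big)$ with $\int g\,dm>0$, forward and backward limits $F$ and $F^-$ of the ratios $\big(\int_0^{\pm T}f\circ\varphi_s\,ds\big)\big/\big(\int_0^{\pm T}g\circ\varphi_s\,ds\big)$; both are $\varphi_s$-invariant and they agree $m$-a.e. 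Using uniform continuity of $f,g$ together with the contraction/expansion of the cocycle and Gromov product along the flow, one shows $F$ is a.e.\ constant along strong-stable sets (common forward endpoint) and $F^-$ along strong-unstable sets (common backward endpoint); since $F=F^-$, it is then a.e.\ constant along both families and along the flow, and the product structure of $m$ forces it to be a.e.\ constant. This is ergodicity of $\varphi_s$ with respect to $m$, equivalently --- by the suspension/base correspondence --- ergodicity of the $\Gamma$-action on $(X^{(2)},\bar\mu\otimes\mu)$, which is conservative by the pivot, completing case (1). The difficulty is that there is no genuine Finsler geodesic flow here, so the two geometric facts the classical argument rests on, contraction of weak-stable horospheres and absolute continuity of the weak-stable foliation, must be replaced by estimates extracted purely from the expanding property of $\sigma,\bar\sigma$ and uniform control of $G$, and then shown strong enough to carry the Hopf averages onto the leaves; tracking the pervasive bounded errors, here and in the comparison $\sum_\gamma\hat\nu(\gamma B\cap B)\asymp_B\sum_\gamma e^{-\delta\norm{\gamma}_\sigma}$ underlying the pivot, is the recurring technical burden.
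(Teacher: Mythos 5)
Your architecture is the standard Hopf--Tsuji--Sullivan blueprint, which is indeed how \cite{BCZZ1} proceeds: the Shadow Lemma gives $\delta\ge\delta_\sigma(\Gamma)$ and, via the first Borel--Cantelli lemma, essentially the whole convergent case; the divergent case reduces to full conicality, hence conservativity, hence ergodicity. The gap is in your ``pivot.'' Divergence of $\sum_\gamma\hat\nu(\gamma B\cap B)$ over \emph{boxes} $B=U\times V$ does not yield conservativity: a wandering set of positive measure need not contain a box, so no contradiction is produced, and Borel--Cantelli in its first form points the wrong way here. What is actually required --- and it is the single hardest estimate in the whole dichotomy --- is a \emph{quasi-independence} bound for shadows, of the shape $\mu\big(\mathcal O_r(\gamma)\cap\mathcal O_r(\eta)\big)\le C\,\mu(\mathcal O_r(\gamma))\,\mu(\mathcal O_r(\eta))$ for all but a controlled family of pairs, extracted from the Shadow Lemma together with uniform control of the Gromov product $G$ on the relevant configurations. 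The second Borel--Cantelli lemma then gives only $\mu(\Lambda^{\rm con}(\Gamma))>0$, which must still be upgraded to full measure (for instance by noting that the restriction of $\mu$ to the invariant set $X\setminus\Lambda^{\rm con}(\Gamma)$, if nonzero, would renormalize to a $\sigma$-Patterson--Sullivan measure of the same dimension contradicting the positive-measure conclusion). Only after that does conservativity follow, since conicality of $z$ forces infinitely many returns of the orbit of $(w,z,t)$ to a fixed compact box. Your proposal names none of these steps, and the mechanism you do name would fail as stated.

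On the ergodicity step, note that the survey records two genuinely different routes: in \cite{CZZ3} the transverse group is replaced by a projectively visible group, where there is an honest Hilbert-metric geodesic flow with usable stable/unstable behavior and the classical Hopf ratio argument applies; in \cite{BCZZ1} one works on the abstract space $X^{(2)}\times\Rb$, and ergodicity is proved for the $\Gamma$-action on $(X^{(2)},\bar\mu\otimes\mu)$ directly by a shadow/density-point argument, ergodicity of the flow then being automatic because a $\varphi_s$-invariant function on $X^{(2)}\times\Rb$ depends only on the $X^{(2)}$-coordinates. Your plan to run the ratio ergodic theorem on the abstract flow space is not wrong in principle, but the leafwise contraction you would need is precisely what is unavailable there, which is why the actual proofs route around it; you correctly flag this as the obstacle but do not resolve it.
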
 

In \cite{BCZZ2}, we establish mixing and equidistribution results for continuous GPS systems  with finite BMS measure. Describing these
results would take us to far afield. However, we will state a counting result for continuous GPS systems for geometrically finite convergence
group actions which we derive as a consequence of these results.

The {\em length spectrum} $\mathcal L(\sigma,\bar\sigma,G)$ of
a GPS system can be defined to be
$$
\mathcal L(\sigma,\bar\sigma,G):=\left\{\ell_\sigma(\gamma)+\ell_{\bar\sigma}(\gamma): \gamma\in\Gamma\text{ is loxodromic}\right\}.
$$ 
We say that $\mathcal L(\sigma,\bar\sigma,G)$ is {\em non-arithmetic} if it generates a dense subgroup of $\mathbb R$.
Let $[\Gamma_{\rm lox}]$ denote the set of conjugacy classes of loxodromic elements.

\begin{theorem}[{\cite[Thm. 1.1]{BCZZ2}}]\label{thm:GPS counting}
Suppose $(\sigma, \bar{\sigma}, G)$ is a continuous GPS system for a torsion-free geometrically finite convergence group $\Gamma\subset\mathsf{Homeo}(X)$ where $\delta:=\delta_\sigma(\Gamma)<+\infty$. If 
\begin{enumerate}
\item $\mathcal L(\sigma,\bar\sigma,G)$ is non-arithmetic and 
\item  $\delta_\sigma(P)<\delta$ for all maximal parabolic subgroups $P \subset \Gamma$, 
\end{enumerate}
then
$$
\#\{[\gamma]\in [\Gamma_{\rm lox}] : 0<\ell_\sigma(\gamma) \le R\} \sim \frac{e^{\delta R}}{\delta R},
$$
i.e.\ the ratio of the two sides goes to 1 as $R \to +\infty$.
\end{theorem}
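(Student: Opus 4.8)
The plan is to derive the asymptotic from the mixing and equidistribution theorems for continuous GPS systems proved in \cite{BCZZ2}, by running Margulis's dynamical counting argument (in the form adapted by Roblin to finite Bowen--Margulis--Sullivan measure, and extended here to the abstract GPS setting) for periodic orbits of the flow $\varphi_s$ on $\Cc:=(X^{(2)}\times\Rb)/\Gamma$. The two hypotheses enter separately: hypothesis (2) (the parabolic gap $\delta_\sigma(P)<\delta$ for every maximal parabolic $P$) will be used to prove that the BMS measure $m$ is finite, and hypothesis (1) (non-arithmeticity of $\mathcal L(\sigma,\bar\sigma,G)$) will then, via \cite{BCZZ2}, give mixing of $\varphi_s$ with respect to the normalized measure $\bar m:=m/\norm{m}$.

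First I would prove $\norm{m}<+\infty$. As $\Gamma$ is torsion-free and geometrically finite, $X$ is the disjoint union of the conical limit set with finitely many $\Gamma$-orbits of bounded parabolic points $p_1,\dots,p_\ell$, whose stabilizers $P_i$ are the maximal parabolic subgroups; correspondingly $\Cc$ splits into a compact thick part together with finitely many cusp neighborhoods $\Cc_1,\dots,\Cc_\ell$. Using the Shadow Lemma for GPS systems from \cite{BCZZ1}, I would bound the $m$-mass of the set of $\varphi_s$-orbits that reach cusp depth $\ge T$ in $\Cc_i$: up to uniform multiplicative constants and an extra polynomial factor reflecting that $P_i$ acts cocompactly on $X\setminus\{p_i\}$, this mass is controlled by the tail $\sum_{p\in P_i,\ \norm{p}_\sigma\ge T}(1+\norm{p}_\sigma)\,e^{-\delta\norm{p}_\sigma}$. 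Hypothesis (2) is exactly the statement $\delta_\sigma(P_i)<\delta$, so each such tail is summable and tends to $0$ as $T\to\infty$; adding the finitely many cusp contributions to the mass of the (compact) thick part gives $\norm{m}<+\infty$. A finite invariant measure is conservative, which rules out the dissipative alternative of the Hopf--Tsuji--Sullivan dichotomy for GPS systems; hence we are in its divergent branch, where $\delta=\delta_\sigma(\Gamma)$, the Poincar\'e series diverges at $\delta$, and the $\Gamma$-action on $(X^{(2)},\bar\mu\otimes\mu)$ is conservative and ergodic.

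With $\norm m<+\infty$ in hand, the hypotheses of the main results of \cite{BCZZ2} are satisfied, so non-arithmeticity of $\mathcal L(\sigma,\bar\sigma,G)$ yields that $\varphi_s$ on $\Cc$ is mixing for $\bar m$, together with the attendant equidistribution of $\varphi_s$-translates of local pieces of the two factors in the product decomposition $d\tilde m=e^{\delta G}\,d\bar\mu\,d\mu\,ds$. I would then run the counting. Closed $\varphi_s$-orbits in $\Cc$ are in bijection with conjugacy classes of loxodromic elements of $\Gamma$: $[\gamma]$ corresponds to the orbit through $(\gamma^-,\gamma^+,\cdot)$, which is $\varphi_s$-periodic of period $\ell_\sigma(\gamma)$; torsion-freeness makes the correspondence clean, primitive orbits matching primitive classes, and imprimitive classes are of lower order, so it suffices to count primitive closed orbits. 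Intersecting a small flow box $B$ with an expanding family $\varphi_{[0,R]}(D)$ for a suitable local transversal $D$, mixing shows that the BMS-weighted number of near-returns of $D$ to $B$ within time about $R$ grows like $e^{\delta R}$, while each such near-return segment shadows --- using the local product structure of $\bar m$ and the north--south convergence dynamics of loxodromics on $X$ --- a unique genuine closed orbit of nearby period. Organizing this bookkeeping (one natural route: first establish the period-weighted asymptotic $\sum_{[\gamma]:\,0<\ell_\sigma(\gamma)\le R}\ell_\sigma(\gamma)\sim e^{\delta R}/\delta$ and then apply an Abel summation / Tauberian step, the factor $1/R$ being the usual one relating $\sum_{\ell\le R}\ell$ to $\#\{\ell\le R\}$) yields
\[
\#\{[\gamma]\in[\Gamma_{\rm lox}]:\ 0<\ell_\sigma(\gamma)\le R\}\ \sim\ \frac{e^{\delta R}}{\delta R}.
\]

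\textbf{The main obstacle} is the cusp analysis, which is needed twice. In the finiteness step it takes the form of a cuspidal refinement of the Shadow Lemma, controlling the BMS mass of deep cusp excursions purely in terms of the intrinsic growth $\delta_\sigma(P_i)$ of the parabolic subgroups, with no negatively curved model to fall back on; in the counting step it is the quantitative statement that orbits lingering deep in the cusps carry negligible BMS mass, which is what lets the Margulis--Roblin machinery --- cleanest for compact quotients --- run on the non-compact space $\Cc$. Non-arithmeticity, by contrast, enters only as the hypothesis of the mixing theorem of \cite{BCZZ2}, which I would use as a black box; granting that input and the cusp estimates, the rest is a careful transcription of Roblin's argument into the abstract GPS framework.
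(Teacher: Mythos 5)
Your outline matches the strategy the paper attributes to \cite{BCZZ2}: the survey does not reprove this theorem but explicitly describes it as a consequence of the mixing and equidistribution results for continuous GPS systems with finite BMS measure, with the parabolic gap $\delta_\sigma(P)<\delta$ playing exactly the role you assign it (finiteness of the BMS measure via a cuspidal shadow-lemma estimate, in the spirit of Dal'bo--Otal--Peign\'e) and non-arithmeticity entering only as the hypothesis for mixing, after which the Margulis--Roblin closing/equidistribution argument on $(X^{(2)}\times\Rb)/\Gamma$ gives the asymptotic. Your identification of the correspondence between closed $\varphi_s$-orbits and loxodromic conjugacy classes with period $\ell_\sigma(\gamma)$, and of the cusp analysis as the main technical obstacle, is consistent with the cited proof.
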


\subsection{Applications to transverse and relatively Anosov groups}
We first observe that we may place transverse groups into the context of the previous subsection.

\begin{proposition}[{\cite[Prop 10.3]{BCZZ2}}]
Suppose $\Gamma$ is non-elementary $\mathsf{P}_\theta$-transverse, $\phi\in \mathfrak{a}_\theta^*$  and 
$$
\lim_{n \rightarrow \infty} \phi(\kappa(\gamma_n)) = +\infty
$$
whenever $\{\gamma_n\}$ is a sequence of distinct elements of $\Gamma$. Define cocycles $\sigma_\phi, \bar{\sigma}_\phi \colon \Gamma \times \Lambda_\theta(\Gamma) \to \Rb$ by 
$$
\sigma_\phi(\gamma, F) = \phi(B_\theta(\gamma,F)) \quad \text{and} \quad \bar\sigma_\phi(\gamma, F) = \iota^*(\phi)(B_\theta(\gamma,F)).
$$
Then $(\sigma_\phi,\bar\sigma_{\phi}, \phi\circ G)$ is a continuous  GPS system for the action of $\Gamma$ on $\Lambda_\theta(\Gamma)$. Moreover:
\begin{enumerate}
\item  One can choose magnitude functions 
$$
\norm{\gamma}_{\sigma_\phi} = \phi(\kappa(\gamma)) \quad \text{and} \quad \norm{\gamma}_{\bar\sigma_\phi} =  \iota^*(\phi)(\kappa(\gamma)).
$$
 In particular, $\delta^\phi(\Gamma) = \delta_{\sigma_\phi}(\Gamma)$. 
\item  If $\lambda \colon \mathsf G \to \mathfrak a^+$ is the Jordan projection, then 
$$
\phi(\lambda(\gamma)) = \ell_{\sigma_\phi}(\gamma)
$$
for all $\gamma \in \Gamma$. 
\end{enumerate}
\end{proposition}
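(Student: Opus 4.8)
The plan is to verify in turn the defining properties of a continuous GPS system --- that $\sigma_\phi$ and $\bar\sigma_\phi$ are proper continuous cocycles linked by the Gromov product $\phi\circ G$ --- and then to establish the two displayed identities. Every step reduces to a property, recorded earlier in the paper, of the partial Iwasawa cocycle $B_\theta$, the Gromov product $G$, the Cartan projection $\kappa_\theta$, or the Jordan projection $\nu_\theta = p_\theta\circ\lambda$.

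First, $\sigma_\phi$ and $\bar\sigma_\phi$ are cocycles: the partial Iwasawa cocycle satisfies $B_\theta(AB,F) = B_\theta(A,BF) + B_\theta(B,F)$, and postcomposing with the linear functionals $\phi$ and $\iota^*(\phi)$ preserves this identity; continuity is immediate from the wedge-product formula defining $B_\theta$. For the Gromov product relation, I would apply $\phi$ to the transformation law
\[
G(A(F_1),A(F_2)) - G(F_1,F_2) = -\hat\iota\circ B_\theta(A,F_1) - B_\theta(A,F_2)
\]
recorded above, and use the duality between $\iota^*(\cdot)$ on $\mathfrak a_\theta^*$ and $\hat\iota$ on $\mathfrak a_\theta$ (a consequence of $\phi(\kappa(A)) = \iota^*(\phi)(\kappa(A^{-1}))$ together with $\kappa(A^{-1}) = -\hat\iota(\kappa(A))$); once the sign of $G$ is normalized compatibly with the convention --- used earlier --- that the BMS weight is $e^{-\delta^\phi(\Gamma)\phi(G)}$, this yields the required identity $\bar\sigma_\phi(\gamma,x) + \sigma_\phi(\gamma,y) = (\phi\circ G)(\gamma x,\gamma y) - (\phi\circ G)(x,y)$.

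For (1), take $\norm{\gamma}_{\sigma_\phi} := \phi(\kappa_\theta(\gamma))$. The expansion estimate follows from the standard comparison between the Iwasawa cocycle and the Cartan projection: if $F\in\Fc_\theta$ stays a definite distance from the repelling flag $U_\theta(\gamma^{-1})$ of $\gamma$, then $\norm{B_\theta(\gamma,F) - \kappa_\theta(\gamma)}$ is bounded in terms of that distance, and applying $\phi$ gives the inequality identifying $\phi\circ\kappa_\theta$ as a valid magnitude. Then $P_{\sigma_\phi}(s) = \sum_\gamma e^{-s\phi(\kappa_\theta(\gamma))} = P_\Gamma^\phi(s)$, so $\delta_{\sigma_\phi}(\Gamma) = \delta^\phi(\Gamma)$; the same argument applied to $\iota^*(\phi)$ --- which again satisfies the standing hypothesis since $\iota^*(\phi)(\kappa(\gamma_n)) = \phi(\kappa(\gamma_n^{-1}))\to+\infty$ along distinct sequences --- handles $\bar\sigma_\phi$. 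For (2), when $\gamma$ is loxodromic its attracting flag $\gamma^+$ is spanned by the generalized eigenspaces of $\gamma$ of largest modulus, so $\bigwedge^k\gamma$ scales the line $\bigwedge^k (\gamma^+)^k$ by $\det(\gamma|_{(\gamma^+)^k})$, of modulus $\lambda_1(\gamma)\cdots\lambda_k(\gamma)$; hence $\omega_k(B_\theta(\gamma,\gamma^+)) = \omega_k(\nu_\theta(\gamma))$ for every $k\in\theta$, i.e.\ $B_\theta(\gamma,\gamma^+) = \nu_\theta(\gamma)$, and so $\ell_{\sigma_\phi}(\gamma) = \phi(B_\theta(\gamma,\gamma^+)) = \phi(\lambda(\gamma))$. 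When $\gamma$ is not loxodromic, $\ell_{\sigma_\phi}(\gamma) = 0$ by definition and $\gamma$ is of finite order or parabolic, so $\nu_\theta(\gamma) = 0$ and both sides vanish.

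The remaining and most delicate point is properness, which I would address last. By (2) the $\sigma_\phi$-period of a loxodromic element is $\phi(\nu_\theta(\gamma))$, so I must show $\phi(\nu_\theta(\gamma_n))\to+\infty$ whenever $\{\gamma_n\}$ is a sequence of distinct loxodromic elements whose fixed-point pairs $\{(\gamma_n^-,\gamma_n^+)\}$ stay in a compact subset of $\Lambda_\theta(\Gamma)^{(2)}$. The key input is the Morse-type comparison $\norm{\kappa_\theta(\gamma) - \nu_\theta(\gamma)} \le C$, with $C$ depending only on a lower bound for the transversality of $\gamma^+$ and $\gamma^-$ (Kapovich--Leeb--Porti, Benoist); relative compactness of the fixed-point pairs provides such a uniform bound, and then the standing hypothesis $\phi(\kappa(\gamma_n))\to+\infty$ forces $\phi(\nu_\theta(\gamma_n))\to+\infty$. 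Hence $\sigma_\phi$, and via $\iota^*(\phi)$ also $\bar\sigma_\phi$, is proper, completing the verification that $(\sigma_\phi,\bar\sigma_\phi,\phi\circ G)$ is a continuous GPS system. I expect the quantitative Cartan--Iwasawa and Cartan--Jordan comparisons underlying properness and the expansion estimate to be the only steps needing genuine care; the rest is formal manipulation of the cocycle identities.
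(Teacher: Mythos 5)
Note first that the survey does not actually prove this proposition---it is quoted from \cite[Prop.~10.3]{BCZZ2}---so your reconstruction can only be judged on its own terms. Most of it is sound: the cocycle identity, the derivation of the Gromov-product relation by applying $\phi$ to the transformation law for $G_\theta$ and using $\iota(\phi)=-\phi\circ\hat\iota$ (you are right that only a global sign normalization of $G$ is at stake; the relative signs of the two cocycle terms do come out equal once the transformation law is derived directly from the wedge-product definitions, though the displayed formula in this survey appears to have its indices/signs garbled), the identification of the magnitude with $\phi\circ\kappa_\theta$, and the period computation $B_\theta(\gamma,\gamma^+)=\nu_\theta(\gamma)$ for loxodromic $\gamma$. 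Two smaller caveats: the Iwasawa--Cartan comparison $\|B_\theta(\gamma,F)-\kappa_\theta(\gamma)\|\le C$ requires $F$ to be \emph{uniformly transverse} to $U_\theta(\gamma^{-1})$, not merely metrically separated from it in $\mathcal F_\theta$ (for a general flag these are inequivalent); for $F\in\Lambda_\theta(\Gamma)$ the two coincide by a compactness argument, and this is precisely where transversality of $\Gamma$ enters. And in part (2) the assertion that a parabolic element has $\nu_\theta(\gamma)=0$ needs a sentence: one must argue that coincidence of the attracting flags of $\gamma$ and $\gamma^{-1}$ forces $\lambda_k(\gamma)\lambda_{d-k+1}(\gamma)=\cdots$, i.e.\ that all eigenvalue moduli agree, rather than citing a rank-one trichotomy.

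The genuine gap is in your properness argument. The comparison $\|\kappa_\theta(\gamma)-\nu_\theta(\gamma)\|\le C$ with $C$ ``depending only on a lower bound for the transversality of $\gamma^+$ and $\gamma^-$'' is false as stated: for $\gamma=hah^{-1}$ with $a$ fixed and $h\to\infty$, the fixed flags $h(a^{\pm})$ can remain in a fixed compact set of transverse pairs while $\nu(\gamma)=\nu(a)$ stays bounded and $\kappa(\gamma)$ blows up. The correct Benoist/Kapovich--Leeb--Porti statement requires uniform $(r,\epsilon)$-\emph{proximality}, a quantitative contraction hypothesis that does not follow from transversality of the fixed flags alone. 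For a transverse group one extracts this uniformity from the convergence-group dynamics---and once you do, it is simpler to bypass the Cartan--Jordan comparison entirely: pass to a subsequence so that $\gamma_n^{\pm}\to F^{\pm}$ with $F^+\neq F^-$ (hence transverse), use the convergence property to see that $U_\theta(\gamma_n^{-1})\to F^-$, observe that $\gamma_n^+\to F^+$ stays uniformly away from $U_\theta(\gamma_n^{-1})$, and then apply the expansion estimate you have already established at the point $F=\gamma_n^+$ to get $\ell_{\sigma_\phi}(\gamma_n)=\sigma_\phi(\gamma_n,\gamma_n^+)\ge\phi(\kappa_\theta(\gamma_n))-C\to+\infty$ by the standing hypothesis. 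So the conclusion is correct, but the lemma you lean on must be replaced by this convergence-dynamics argument (or by a genuinely uniform proximality statement).
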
 

One can use standard arguments to show that, in the context of transverse groups, the length spectrum is always non-arithmetic~\cite[Prop. 10.4]{BCZZ2}.
Hence, combining Theorem~\ref{thm:GPS counting} with the results in~\cite{CZZ4} (see the discussion before Theorem~\ref{thm: rel Anosov divergent}) one 
obtains the following counting result for relatively Anosov groups.  One also obtains mixing and equidistribution results in this setting.

\begin{theorem}[{\cite[Cor. 10.7]{BCZZ2}}]
\label{counting}
Suppose $\Gamma$ is torsion-free and $\mathsf{P}_\theta$-relatively Anosov. If 
$\phi\in\mathfrak{a}_\theta^*$ and $\delta:=\delta^\phi(\Gamma)<+\infty$, then
$$
\#\{[\gamma]\in [\Gamma_{\rm lox}] :0<\phi(\lambda(\gamma))  \le R\} \sim \frac{e^{\delta R}}{\delta R}.
$$
\end{theorem}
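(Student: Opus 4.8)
The plan is to recognize the statement as an instance of the GPS counting theorem, Theorem~\ref{thm:GPS counting}, applied to the continuous GPS system attached to $\phi$; once this is set up, the proof reduces to verifying the two extra hypotheses of that theorem.

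First I would put the relatively Anosov group into the GPS framework. Since $\Gamma$ is $\mathsf{P}_\theta$-relatively Anosov it is $\mathsf{P}_\theta$-transverse, and since $\delta^\phi(\Gamma)<+\infty$ the functional $\phi$ is strictly positive on $\Bc_\theta(\Gamma)\setminus\{\vec 0\}$, by the converse direction of \cite[Theorem 1.3]{CZZ4}; combined with $\mathsf{P}_\theta$-divergence this forces $\phi(\kappa(\gamma_n))\to+\infty$ along any sequence of distinct elements of $\Gamma$. Hence \cite[Prop.\,10.3]{BCZZ2} applies and produces a continuous GPS system $(\sigma_\phi,\bar\sigma_\phi,\phi\circ G)$ for the convergence action of $\Gamma$ on $\Lambda_\theta(\Gamma)$, with magnitude function $\norm{\gamma}_{\sigma_\phi}=\phi(\kappa(\gamma))$, with $\delta_{\sigma_\phi}(\Gamma)=\delta^\phi(\Gamma)=\delta<+\infty$, and with $\sigma_\phi$-period $\ell_{\sigma_\phi}(\gamma)=\phi(\lambda(\gamma))$ for every $\gamma\in\Gamma$. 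We may and do assume $\Gamma$ is non-elementary, so that $\Lambda_\theta(\Gamma)$ is perfect (Proposition~\ref{prop: convergence group}); together with torsion-freeness and the fact that a relatively Anosov group acts on its $\theta$-limit set as a geometrically finite convergence group (every point conical or bounded parabolic), this verifies the standing hypotheses of Theorem~\ref{thm:GPS counting}.

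Next I would check the two additional hypotheses. Hypothesis (1), non-arithmeticity of $\mathcal L(\sigma_\phi,\bar\sigma_\phi,\phi\circ G)$, holds for transverse groups by \cite[Prop.\,10.4]{BCZZ2}. For hypothesis (2), the maximal parabolic subgroups of $\Gamma$ are the stabilizers $H=\mathrm{Stab}_\Gamma(z)$ of the bounded parabolic fixed points $z\in\Lambda_\theta(\Gamma)$; since $\norm{\gamma}_{\sigma_\phi}=\phi(\kappa(\gamma))$, restricting the Poincar\'e series to $H$ identifies it with $P^\phi_H$, so $\delta_{\sigma_\phi}(H)=\delta^\phi(H)$. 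Moreover $\Lambda_\theta(H)=\{z\}$ (as $H$ acts properly discontinuously on $\Lambda_\theta(\Gamma)\setminus\{z\}$) is a proper subset of the perfect set $\Lambda_\theta(\Gamma)$, and the crucial technical result from \cite{CZZ4} recalled just before Theorem~\ref{thm: rel Anosov divergent} gives $P^\phi_H(\delta^\phi(H))=+\infty$; feeding these into Theorem~\ref{thm: entropy drop} yields $\delta_{\sigma_\phi}(H)=\delta^\phi(H)<\delta^\phi(\Gamma)=\delta$, which is hypothesis (2).

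With both hypotheses in hand, Theorem~\ref{thm:GPS counting} gives
$$\#\{[\gamma]\in[\Gamma_{\rm lox}]:0<\ell_{\sigma_\phi}(\gamma)\le R\}\sim\frac{e^{\delta R}}{\delta R},$$
and substituting $\ell_{\sigma_\phi}(\gamma)=\phi(\lambda(\gamma))$ finishes the proof (torsion-freeness ensures every non-trivial $\gamma$ is loxodromic for the action on $\Lambda_\theta(\Gamma)$ with $\phi(\lambda(\gamma))>0$, so the two sets being counted coincide). I expect the one genuinely substantive point to be hypothesis (2): everything else is formal bookkeeping within the GPS formalism, whereas the strict entropy gap $\delta^\phi(H)<\delta^\phi(\Gamma)$ at the cusps rests on the nontrivial divergence statement $P^\phi_H(\delta^\phi(H))=+\infty$ from \cite{CZZ4}, which is exactly what licenses the application of Theorem~\ref{thm: entropy drop}.
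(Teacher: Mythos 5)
Your proposal is correct and follows essentially the same route as the paper: the paper likewise obtains this result by feeding the GPS system of \cite[Prop.\,10.3]{BCZZ2} into Theorem~\ref{thm:GPS counting}, using \cite[Prop.\,10.4]{BCZZ2} for non-arithmeticity and the divergence result from \cite{CZZ4} (via Theorem~\ref{thm: entropy drop}) to get the entropy gap at the parabolics. You have correctly identified the parabolic entropy gap as the one substantive hypothesis to verify.
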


Sambarino \cite{sambarino-quantitative} (see also \cite{BCLS}) established Theorem \ref{counting} for Anosov groups.

\medskip

Kim--Oh~\cite{KO}  used the approach  of Kim--Oh--Wang to show that if $\delta^\phi(\Gamma)<+\infty$ and $\Gamma$ is $\Psf_\theta$-relatively Anosov, 
then the BMS measure on $W^\phi(\Gamma)$ is finite and the flow is mixing. They also show that the BMS measure is the unique measure
of maximal entropy in this settting.

\subsection{Other work}
Delarue--Monclair--Sanders \cite{DMS} recently showed that flows associated to $P_{1,d-1}$-Anosov subgroups of $\mathsf{SL}(d,\mathbb R)$ are
exponentially mixing. They obtain applications to the associated zeta functions and obtain stronger asymptotic counting results (when the group
is irreducible).  Chow--Sarkar \cite{CS} generalized their work to the setting of Zariski dense $\Psf_\theta$-Anosov groups.

Kim--Oh--Wang \cite{KOW23} proved an analogue of the Hopf-Tsuji-Sullivan dichotomy for the  action of subspaces of $\mathfrak a_\theta$ on $W(\Gamma)$. 

Li--Pan--Xu \cite{LPX} showed that the Hausdorff dimension of the limit set of  an Anosov subgroup of $\mathsf{SL}(3,\mathbb R)$ is given by its affinity exponent.
One surprising consequence is that the Hausdorff dimension varies discontinuously over the space of Anosov representations of a surface group into
$\mathsf{SL}(3,\mathbb R)$.

\end{document}